\newtheorem{theorem}{Theorem}[section]
\newtheorem{definition}{Definiton}[section]
\newtheorem{lemma}{Lemma}[section]
\newtheorem{proposition}{Proposition}[section]
\newtheorem{corollary}{Corollary}[section]
\newtheorem{remark}[proposition]{Remark}
\def\e{\epsilon}
\def\n{\nabla}
\def\p{\partial}
\def\a{\alpha}
\def\n{\nabla}
\def\tr{\mathrm{tr}}
\def\p{\partial}
\def\e{\epsilon}
\def\a{\alpha}
\def\k{\kappa}
\def\l{\lambda}
\def\s{\sigma}
\def\n{\nabla}
\def\<{\langle}
\def\>{\rangle}
\def\n{D}
\def\tr{\mathrm{tr}}
\def\p{\partial}
\def\l{\lambda}
\def\s{\sigma}
\def\Rn{\mathbb R^n}
\def\lto{\left(}
\def\rto{\right)}
\def\R{\mathbb{R}}
\def\rr{\mathbb{R}}
\def\lt{\left}
\def\rt{\right}
\def\ur{u_R}
\def\a0{\alpha_0}
\def\uv{\underline{v}}
\def\tx{\tilde{x}}
\def\al{\alpha}
\def\tr{\tilde{r}}
\def\goto{\rightarrow}
\def\uer{u^{\e_R}_R}
\newcommand{\be}{\begin{equation}}
\newcommand{\ee}{\end{equation}}
\newcommand{\ju}[2]{\begin{array}{#1}#2\end{array}}
\numberwithin{equation}{section}
\begin{document}
\title[Degenerate Hessian equations on exterior domains]
{Generalized Minkowski inequality via degenerate Hessian equations on exterior domains}

\author{Ling Xiao}
\address{Department of Mathematics, University of Connecticut,
Storrs, Connecticut 06269}
\email{ling.2.xiao@uconn.edu}
\thanks{2010 Mathematics Subject Classification. Primary 35J60; Secondary 53C21, 53C24.}

\begin{abstract}
In this paper, we prove a generalized Minkowski inequality holds for any smooth, $(k-1)$-convex, starshaped domain $\Omega.$
Our proof relies on the solvability of the degenerate $k$-Hessian equation on the exterior domain $\R^n\setminus\Omega.$
\end{abstract}

\maketitle

\section{Introduction}

In \cite{AFM}, Agostiniani-Fogagnolo-Mazzieri considered the $p$-Laplacian equation in an exterior domain  $\mathbb R^n\setminus\bar{\Omega}$,
 \begin{equation}\label{eqn:1.00}
\left\{
\begin{aligned}
&\Delta_p u={\rm div}(|\n u|^{p-2}\n u)=0\ \ {\rm in}\ \ \mathbb R^n\setminus\bar{\Omega}\\
&u=1\ \ {\rm on} \ \ \partial\Omega\\
&u(x)\rightarrow 0\ \ {\rm as}\ \ |x|\rightarrow\infty,
\end{aligned}\right.
\end{equation}
where $\Omega\subset\R^n$ is a bounded
open set with smooth boundary and $1 < p < n.$
The existence, regularity, and asymptotic behavior of \eqref{eqn:1.00} is clear:
\begin{itemize}
\item[(i)] there is a unique solution $u\in C^{1, \alpha}(\mathbb R^n\setminus \Omega)$ and $u\in C^\infty((\mathbb R^n\setminus \Omega)\setminus \{\n u\neq 0\})$.
\item[(ii)] up to some constant, $$u(x)\to \Gamma_p(x)=\frac{p-1}{n-p}\left(\frac1{\omega_{n-1}}\right)^{\frac{1}{p-1}}|x|^{\frac{p-n}{p-1}},\hbox{ as }|x|\to \infty.$$
Moreover, $$\lim_{|x|\rightarrow+\infty}\frac{u(x)}{\Gamma_{p}(x)}={{\rm C}_{p}}(\Omega)^{\frac1{p-1}},$$
where
\begin{equation*}
{\rm C}_{p}(\Omega)={\rm inf}\Big\{
\int_{\mathbb R^n}|\n v|^pdx\,\Big|\, v\in C_{c}^{\infty}(\mathbb R^n),\, v\geq1\ \ {\rm on}\ \ \Omega\Big\}.
\end{equation*}
\end{itemize}
By applying $(i)$ and $(ii),$  Agostiniani-Fogagnolo-Mazzieri proved the following $L^p$-Minkowski inequality
\begin{theorem}(Theorem 1.2 of \cite{AFM})
\label{int-thm1}
Let $\Omega\subset\R^n$
be an open bounded set with smooth
boundary. Then, for every $1 < p < n$ the following inequality holds
\be\label{int1.1}
C_p(\Omega)^{\frac{n-p-1}{n-p}}\leq\frac{1}{|\mathbb{S}^{n-1}|}\int_{\p\Omega}\left|\frac{H}{n-1}\right|^pd\sigma,
\ee
where $C_p(\Omega)$ is the normalised $p$-capacity of $\Omega.$  Moreover, equality
holds in \eqref{int1.1} if and only if $\Omega$ is a ball.
\end{theorem}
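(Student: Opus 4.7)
The plan is to prove \eqref{int1.1} as the endpoint inequality of a monotonicity formula on the level sets $\Sigma_{t}=\{u=t\}$ of the $p$-harmonic capacitary potential $u$ solving \eqref{eqn:1.00}. Guided by the asymptotic profile (ii), the natural candidate is a weighted level-set integral of the form
\[
F(t)\;=\;t^{-\gamma}\int_{\Sigma_{t}}|\n u|^{p-1}\,H_{t}\,d\s,\qquad \gamma\;=\;\frac{p-1}{n-p},
\]
with $H_{t}$ the mean curvature of $\Sigma_{t}$; the exponent $\gamma$ is calibrated so that $F$ is constant on the radial model $\Gamma_{p}$. The inequality \eqref{int1.1} should then follow from the chain
\[
c(n,p)\,C_{p}(\Omega)^{(n-p-1)/(n-p)}\;=\;\lim_{t\to 0^{+}}F(t)\;\le\;F(1)\;\le\;c(n,p)\int_{\p\Omega}\bigl|\tfrac{H}{n-1}\bigr|^{p}\,d\s,
\]
in which the middle inequality is the monotonicity, the outer equality/inequality is a direct computation at the two endpoints, and $c(n,p)$ is a dimensional constant.

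The endpoint at $t\to 0^{+}$ is obtained by substituting $u\sim C_{p}(\Omega)^{1/(p-1)}\Gamma_{p}$ into $F$, using that $\Sigma_{t}$ is asymptotically a round sphere of radius $r_{t}\sim t^{-(p-1)/(n-p)}$, and carrying out the computation. The bound on $F(1)=\int_{\p\Omega}|\n u|^{p-1}H\,d\s$ combines three ingredients: the pointwise boundary identity
\[
(p-1)\,\p_{\nu}|\n u|\;=\;-|\n u|\,H\qquad\text{on }\p\Omega,
\]
obtained from $\De_{p}u=0$ by decomposing $\De u$ tangentially and using that $\n u$ is normal and $u\equiv 1$ on $\p\Omega$; the flux-conservation law
\[
\int_{\Sigma_{t}}|\n u|^{p-1}\,d\s\;=\;|\SS^{n-1}|\,C_{p}(\Omega)\qquad\forall\,t\in(0,1],
\]
immediate from $\div(|\n u|^{p-2}\n u)=0$ and the divergence theorem; and a H\"older interpolation in which $|\n u|^{p-1}H$ is split against the reference measure $|\n u|^{p-1}d\s$ so as to recover $\int H^{p}\,d\s$ in the end.

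The heart of the argument is the monotonicity $F'(t)\ge 0$, which I would attack through a Bochner-type identity for the $p$-Laplacian: one shows that $|\n u|^{\a}$, for a carefully chosen exponent $\a=\a(n,p)$, is a weak subsolution of the linearised $p$-Laplacian $\mathcal{L}^{u}$, i.e.\ $\mathcal{L}^{u}\bigl(|\n u|^{\a}\bigr)\ge 0$. Integrating this on $\{t_{1}\le u\le t_{2}\}$, using the divergence theorem, the co-area formula, and the first-variation formulas along the level-set foliation $\{\Sigma_{t}\}$, and matching the boundary terms on $\Sigma_{t_{1}},\Sigma_{t_{2}}$ against $F(t_{1})$ and $F(t_{2})$, produces the desired monotonicity.

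The main obstacle is rigour: the potential $u$ is only $C^{1,\a}$ and its critical set $\{\n u=0\}$ may be nonempty, so the Bochner identity does not hold pointwise across $\{\n u=0\}$. One therefore regularises \eqref{eqn:1.00} by replacing $|\n u|^{p-2}$ with $(|\n u|^{2}+\eps)^{(p-2)/2}$, runs the monotonicity argument on the smooth solutions $u_{\eps}$, and passes to the limit $\eps\to 0^{+}$ using the $C^{1,\a}$-compactness coming from the regularity theory of \eqref{eqn:1.00}. The rigidity case is obtained by chasing equality in the Bochner inequality: it forces the principal curvatures of each $\Sigma_{t}$ to coincide and $|\n u|$ to be constant on $\Sigma_{t}$, from which one concludes that the level sets are concentric spheres and hence $\Omega$ is a ball.
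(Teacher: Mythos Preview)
This theorem is not proved in the paper at all: it is quoted from \cite{AFM} as motivation, and the paper immediately moves on to its own results (Theorem~\ref{int-thm2} and Corollary~\ref{int-cor1}) for the $k$-Hessian equation. So there is no ``paper's own proof'' to compare against; you have written a proof sketch for a background result that the author simply cites.

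That said, your outline is broadly faithful to the strategy of \cite{AFM}: a monotonicity formula along level sets of the $p$-capacitary potential, calibrated on the radial model, with the inequality recovered from the two endpoints and rigidity from the equality case in the Bochner/Kato step. If you compare it to what the present paper actually does in Section~\ref{section mq} for the $k$-Hessian analogue, you will see the same architecture: a level-set quantity $\Phi(\tau)$ is shown to be monotone by differentiating, applying the divergence theorem and co-area formula, and closing with a Kato-type inequality (Proposition~\ref{Kato}); the asymptotic behaviour at infinity (Section~\ref{section ab}) supplies the endpoint at $\tau\to-\infty$. The main technical difference is that for the degenerate $k$-Hessian problem the author must first build the solution and its asymptotics by hand (Sections~\ref{section-cs}--\ref{section ab}), whereas for the $p$-Laplacian these are already available in the literature. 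Your remark about regularising to handle $\{\nabla u=0\}$ is the right instinct for the $p$-Laplacian; in the $k$-Hessian setting here the analogous issue is handled by the approximating nondegenerate problems \eqref{ed1*} and the gradient lower bound of Theorem~\ref{existence-theorem}.
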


Inspired by \cite{AFM}, in this paper, we study the following Hessian equation in an exterior domain $\mathbb R^n\setminus\bar{\Omega}$,
\begin{equation}\label{k-hessian}
\left\{
\begin{aligned}
&S_k(D^2 u)=0\ \ {\rm in}\ \ \mathbb R^n\setminus\bar{\Omega}\\
&u=-1\ \ {\rm on} \ \ \partial\Omega\\
&u(x)\rightarrow 0\ \ {\rm as}\ \ |x|\rightarrow\infty,
\end{aligned}\right.
\end{equation}
where $\Omega$ is a bounded, $(k-1)$-convex, star-shaped domain and $k<\frac{n}{2}.$
\begin{remark}
It is easy to see that when $k=\frac{n}{2},$ the rotationally symmetric solution to equation
\[\left\{
\begin{aligned}
&S_k(D^2 u)=0\ \ {\rm in}\ \ \mathbb R^n\setminus\bar{B_1}\\
&u=-1\ \ {\rm on} \ \ \partial B_1\\
\end{aligned}\right.
\]
is $u=C\log|x|-1$ for any $C>0.$ We can see that as $|x|\goto\infty,$ $u(x)=C\log |x|-1\goto\infty.$ This implies that \eqref{k-hessian} is not solvable when $k=\frac{n}{2}.$ Similarly, we can show that when $k>\frac{n}{2}$, equation \eqref{k-hessian} is also not solvable. Therefore, the requirement $k<\frac{n}{2}$ is necessary.
\end{remark}
To present the result, we first introduce some notations. In this paper, $D^2u$ denotes the Hessian of $u,$ and the $k$-th elementary symmetric function
$S_k(A)$ of a symmetric matrix $A$ is defined by
\[S_k(A)=S_k(\l[A])=\sum\limits_{1\leq i_1<\cdots<i_k\leq n}\l_{i_1}\cdots\l_{i_k},\]
where $\l[A]=(\l_1, \cdots, \l_n)$ are the eigenvalues of $A.$
We also have the following definition.

\begin{definition}
For any open set $U\subset\R^n$, a function $v\in C^{1,1}(U)$ is called \textbf{k-admissible} if the
eigenvalues $\l[D^2 v(x)] =(\l_1(x), \cdots, \l_n(x))\in\bar{\Gamma}_k$ for all $x\in U,$ where
$\Gamma_k$ is the G{\aa}rding's cone
\[\Gamma_k=\{ \l\in \R^n | S_m(\l) > 0, m = 1,\cdots , k\}.\]
A $C^2$ regular hypersurface $\mathcal{M}\subset\R^{n+1}$ is called \textbf{k-convex} if its principal curvature vector
$\kappa(X)\in\Gamma_k$ for all $X\in\mathcal{M}.$
\end{definition}
We prove
\begin{theorem}
\label{int-thm2}
Let $\Omega$ be a $(k-1)$-convex, starshaped domain. Then, there exists a $k$-admissible solution $u\in C^{1,1}(\R^n\setminus\Omega)$ to equation \eqref{k-hessian},
such that $\frac{u(x)}{-|x|^{2-\frac{n}{k}}}\goto\gamma$ as $|x|\goto\infty$ in $C^2$ topology. Here, $\gamma>0$ is a constant depending on $\p\Omega$.
Moreover, for any $s\in[-1, 0),$ the level set $\{u=s\}$ is regular.
\end{theorem}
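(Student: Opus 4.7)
My plan is to prove Theorem \ref{int-thm2} by approximating the degenerate exterior problem by non-degenerate Dirichlet problems on bounded annular regions, obtaining a priori estimates uniform in the approximation parameters, and passing to the limit. For $R\gg 1$ and $\varepsilon\in(0,1)$, I would first solve
\begin{equation*}
\begin{cases} S_k(D^2 u_{R,\varepsilon}) = \varepsilon & \text{in } B_R \setminus \bar\Omega, \\
u_{R,\varepsilon} = -1 & \text{on } \partial\Omega, \\
u_{R,\varepsilon} = \varphi_R & \text{on } \partial B_R, \end{cases}
\end{equation*}
where $\varphi_R$ is a small negative outer Dirichlet value chosen to be compatible with the radial profile $\psi(r) = -\gamma r^{2-n/k}$ from the Remark (for instance $\varphi_R = -\gamma R^{2-n/k}$, for a suitable $\gamma>0$ depending on $\partial\Omega$). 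Existence of a smooth $k$-admissible solution $u_{R,\varepsilon}$ follows from the Caffarelli-Nirenberg-Spruck machinery once a $k$-admissible subsolution attaining the prescribed boundary data is produced; a modification of the radial $\psi$ does the job.

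The bulk of the work is then in establishing $C^0$, $C^1$, and $C^2$ a priori estimates for $u_{R,\varepsilon}$ that are uniform in both $R$ and $\varepsilon$. The $C^0$ bound is immediate from comparison with two radial barriers $-\gamma_{\pm}|x|^{2-n/k}$ trapping $u_{R,\varepsilon}$, which simultaneously fixes the eventual asymptotic profile. For the gradient, the starshapedness of $\Omega$ is crucial: combined with a sub-barrier at $\partial\Omega$ constructed from the radial profile $\psi$, it gives positive upper and lower bounds for $\partial_\nu u_{R,\varepsilon}$ on $\partial\Omega$, while the radial barrier directly controls $|\nabla u_{R,\varepsilon}|$ on $\partial B_R$; the interior gradient bound then follows from a maximum principle applied to an auxiliary function of the form $\langle x,\nabla u\rangle - \mu u$. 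For the $C^2$ estimate, tangential-tangential second derivatives on $\partial\Omega$ come from differentiating the Dirichlet condition twice, mixed tangential-normal derivatives from the starshapedness, and the double-normal derivative from the equation itself, while the global interior $C^2$ bound is supplied by a Pogorelov-type estimate for $k$-admissible solutions with a cut-off adapted to the exterior geometry. The main obstacle is to make these second-order bounds uniform simultaneously as $R\to\infty$ and $\varepsilon\to 0^+$; one must prevent any concentration of $D^2 u_{R,\varepsilon}$, which requires a careful choice of Pogorelov cut-off compatible with the noncompact setting.

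Once uniform $C^{1,1}_{\mathrm{loc}}$ estimates are in hand, an Arzela-Ascoli argument together with the stability of $k$-admissible viscosity solutions produces a subsequential limit $u\in C^{1,1}_{\mathrm{loc}}(\mathbb R^n\setminus\Omega)$ solving \eqref{k-hessian}. For the $C^2$ asymptotic $u(x)/(-|x|^{2-n/k})\to\gamma$, I would rescale $v_R(y) := R^{n/k-2}u(Ry)$ on a fixed annulus $\{1\le|y|\le 2\}$; the $v_R$ are uniformly $k$-admissible solutions of $S_k(D^2 v_R)=0$, converge in $C^0$ to $-\gamma|y|^{2-n/k}$ by the two-sided barrier, and Evans-Krylov-type estimates adapted to the $k$-Hessian upgrade the convergence to $C^2$, identifying the limiting constant $\gamma>0$.

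Finally, for the regularity of the level sets $\{u=s\}$, $s\in[-1,0)$, it suffices to show $|\nabla u|>0$ there. The boundary gradient estimate gives $|\nabla u|>0$ on $\partial\Omega$, and the asymptotic analysis gives it near infinity. In the intermediate region, the maximum principle applied to $\langle x,\nabla u\rangle$, which is positive on $\partial\Omega$ by starshapedness and asymptotically positive by the radial profile, propagates positivity across the annulus and rules out the vanishing of $\nabla u$ on any level set in $\{-1\le u<0\}$. The implicit function theorem then yields that each $\{u=s\}$, $s\in[-1,0)$, is a regular hypersurface.
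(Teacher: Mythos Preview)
Your overall architecture matches the paper's: approximate by non-degenerate Dirichlet problems on $B_R\setminus\bar\Omega$, get uniform $C^0$--$C^2$ estimates, pass to the limit, then analyze asymptotics and level-set regularity. However, there is a genuine gap at exactly the point the paper identifies as its main contribution.

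\textbf{The subsolution.} You write that ``a modification of the radial $\psi$ does the job'' for the CNS machinery and later that the boundary $C^1$ lower bound and the mixed tangential--normal $C^2$ bound on $\partial\Omega$ come from ``a sub-barrier at $\partial\Omega$ constructed from the radial profile $\psi$'' and ``starshapedness''. This is precisely where the difficulty lies and your sketch does not resolve it. A radial barrier $-C|x|^{2-n/k}$ (or any vertical shift of it) does not take the value $-1$ on the non-spherical $(k-1)$-convex hypersurface $\partial\Omega$, so it cannot be used as a touching subsolution to produce a uniform lower bound for $\partial_\nu u$ on $\partial\Omega$, nor to build the barrier $h=u-\underline v+\theta|\tilde x|^2$ needed for the mixed $C^2$ estimate. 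The usual alternative, a distance-function based subsolution, is only defined in a thin collar of $\partial\Omega$ because $\partial\Omega$ is merely $(k-1)$-convex, and that collar is too small: one has no a priori control of $u$ on its outer boundary. The paper's remedy is the construction in Section~\ref{section-cs}: with $\partial\Omega=\{\rho(\theta)\theta\}$, set $g(x)=r/\rho(\theta)$ and $\phi=g^N$. Then $\phi\equiv 1$ exactly on $\partial\Omega$, and using the $(k-1)$-convexity of $\partial\Omega$ one computes $\sigma_k(D^2\phi)\ge r^{-k}$ throughout $\R^n\setminus\bar\Omega$ for $N$ large. This $\phi$ (rescaled and shifted) is the subsolution driving both Lemma~\ref{C1-inside-lower-lem} and Lemma~\ref{C2-inside-mix-lem}. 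Without an analogue of this construction, your $C^1$ lower bound and your mixed $C^2$ bound on $\partial\Omega$ are not justified.

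\textbf{Decay and asymptotics.} You propose to upgrade $C^0$ convergence of the rescalings $v_R(y)=R^{n/k-2}u(Ry)$ to $C^2$ via ``Evans--Krylov-type estimates adapted to the $k$-Hessian''. But the limiting equation $S_k(D^2v)=0$ is degenerate, so Evans--Krylov does not apply directly, and you have not said what supplies uniform $C^{1,1}$ bounds for $v_R$ on fixed annuli. The paper handles this differently: it first proves, for the approximate solutions $u_R^\varepsilon$, explicit decay estimates $|u|\lesssim|x|^{-\alpha_0}$, $|Du|\lesssim|x|^{-\alpha_0-1}$, $|D^2u|\lesssim|x|^{-\alpha_0-2}$ (Lemmas~\ref{C0-interior-lem}--\ref{C2-interior-lem}, by localized Chou--Wang type arguments), and these pass to the limit. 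Only then does the rescaling argument of Lemma~\ref{asymp-lem} work: the rescaled family is precompact in $C^2$ by the decay bounds, and the limit is identified via the maximum principle for $u/\mu$ (Lemma~\ref{ab-max-lem}), not by interior regularity for the degenerate equation.

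\textbf{Level sets.} Your idea of applying the maximum principle to $\langle x,\nabla u\rangle$ is a reasonable alternative and, since $S_k^{ij}\partial_{ij}\langle x,\nabla u\rangle=2kS_k(D^2u)$, it can be made to work once the boundary gradient lower bound is available. The paper instead proves the stronger inequality $|Du|\ge b_0(-u)^{(\alpha_0+1)/\alpha_0}$ via Lemma~\ref{subsection conv-lem1}; this gives $|Du|>0$ on every level set $\{u=s\}$, $s\in[-1,0)$, directly. Either route is fine, but note both depend on the $C^1$ lower bound at $\partial\Omega$, which in turn rests on the subsolution construction you have not supplied.
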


Moreover, we derive
\begin{corollary}
\label{int-cor1}
Let $\Omega$ be a $(k-1)$-convex, starshaped domain. Then we have
\be\label{int1.2}
\int_{\p\Omega}|\nabla u|^{k+\beta}\s_{k-1}(\p\Omega)dx\geq |\mathbb{S}^{n-1}|{n-1\choose k-1}\lt(\frac{n-2k}{k}\rt)^{k+\beta}\gamma^{k-\frac{\beta}{n/k-2}}.
\ee
Here, $u,$ $\gamma$ are the same as in Theorem \ref{int-thm2} and $\beta$ is an arbitrary constant satisfying $\beta\geq \frac{n-2k}{n-k}.$ In particular, when $\beta=n-2k$ we get
\be\label{int1.2*}
\int_{\p\Omega}|\nabla u|^{n-k}\s_{k-1}(\p\Omega)dx\geq |\mathbb{S}^{n-1}|{n-1\choose k-1}\lt(\frac{n-2k}{k}\rt)^{n-k}.
\ee
Moreover, equality holds in \eqref{int1.2} and \eqref{int1.2*} if and only if $\Omega$ is a ball.
\end{corollary}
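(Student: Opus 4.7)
The plan is to follow the Agostiniani--Fogagnolo--Mazzieri strategy adapted to the degenerate $k$-Hessian: construct a quantity along the level sets $\Sigma_s := \{u = s\}$ that is monotone in $s \in [-1, 0)$, then compare its value at $s = -1$ (the boundary $\p\Omega$) with its limit as $s \to 0^{-}$. By Theorem \ref{int-thm2}, each $\Sigma_s$ with $s \in [-1,0)$ is a smooth hypersurface on which $|\nabla u|$ does not vanish; the $k$-admissibility of $u$ ensures that the principal curvatures $\kappa = (\kappa_1, \ldots, \kappa_{n-1})$ of $\Sigma_s$, computed with respect to $\nu = \nabla u/|\nabla u|$, lie in $\overline{\Gamma}_{k-1} \subset \R^{n-1}$. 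With
\[
F(s) := (-s)^{-\frac{(n-k)\beta}{n-2k}}\int_{\Sigma_s}|\nabla u|^{k+\beta}\sigma_{k-1}(\kappa)\,d\sigma,
\]
the aim is to show $F'(s) \le 0$ on $[-1, 0)$, with equality forcing $\Omega$ to be a ball.

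For the monotonicity I would differentiate $F$ using the flow $\nabla u/|\nabla u|^2$ on $\R^n \setminus \Omega$ (normal speed $1/|\nabla u|$, carrying $\Sigma_s$ to $\Sigma_{s+ds}$); the first-variation formula gives $\frac{d}{ds}\int_{\Sigma_s} f\,d\sigma = \int_{\Sigma_s}|\nabla u|^{-1}(\nu(f) + fH)\,d\sigma$ with $H = \sigma_1(\kappa)$. Applied to $f = |\nabla u|^{k+\beta}\sigma_{k-1}(\kappa)$, this produces an integrand involving $u_{\nu\nu} = \nu(|\nabla u|)$ and $\nu(\sigma_{k-1}(\kappa))$, the latter rewritten via the Codazzi identities. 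In an orthonormal frame diagonalizing the shape operator of $\Sigma_s$, the block decomposition of the Hessian yields
\[
S_k(D^2 u) = |\nabla u|^k\sigma_k(\kappa) + |\nabla u|^{k-1}u_{\nu\nu}\sigma_{k-1}(\kappa) - |\nabla u|^{k-2}\sum_i [e_i(|\nabla u|)]^2\sigma_{k-2}(\kappa|i),
\]
and the equation $S_k(D^2 u) = 0$ then solves for $u_{\nu\nu}$ in terms of $\sigma_k/\sigma_{k-1}$ and the tangential gradient of $|\nabla u|$. Substituting this into the expression for $F'(s)$ and applying the Newton--MacLaurin inequality to bound $\sigma_k$ against $\sigma_{k-1}$, the leading ``radial'' term cancels against the derivative of the prefactor $(-s)^{-(n-k)\beta/(n-2k)}$, and the remaining integrand is a non-positive quadratic form in $\nabla_{\Sigma_s}|\nabla u|$ precisely when $\beta \ge (n-2k)/(n-k)$.

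At $s = -1$ one has $F(-1) = \int_{\p\Omega}|\nabla u|^{k+\beta}\sigma_{k-1}(\p\Omega)\,dx$. For the limit as $s \to 0^-$, the $C^2$-asymptotic $u(x)/(-|x|^{2-n/k}) \to \gamma$ from Theorem \ref{int-thm2} gives that $\Sigma_s$ is $C^2$-close to the sphere of radius $r_s = \gamma^{k/(n-2k)}(-s)^{-k/(n-2k)}$, on which $|\nabla u| \sim \frac{\gamma(n-2k)}{k}r_s^{(k-n)/k}$ and $\sigma_{k-1}(\kappa) \sim \binom{n-1}{k-1}r_s^{1-k}$; direct substitution produces $\lim_{s \to 0^-}F(s) = |\mathbb{S}^{n-1}|\binom{n-1}{k-1}\left(\frac{n-2k}{k}\right)^{k+\beta}\gamma^{k-\beta/(n/k-2)}$. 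Therefore $F(-1) \ge \lim_{s\to 0^-}F(s)$ is exactly \eqref{int1.2}, and the choice $\beta = n - 2k$ specializes to \eqref{int1.2*}. For rigidity, $F' \equiv 0$ forces equality in the Newton--MacLaurin inequality, so every $\Sigma_s$ is totally umbilic and hence a round sphere; combined with the star-shapedness and the boundary condition $u|_{\p\Omega} = -1$, this forces $\Omega$ to be a ball. I expect the main obstacle to be the monotonicity computation: after eliminating $u_{\nu\nu}$, the integrand couples the Newton--MacLaurin defect to the non-negative quadratic form $\sum_i[e_i(|\nabla u|)]^2\sigma_{k-2}(\kappa|i)$, and identifying exactly the prefactor $(-s)^{-(n-k)\beta/(n-2k)}$ and the threshold $\beta \ge (n-2k)/(n-k)$ that together produce a non-positive result is the delicate algebraic point.
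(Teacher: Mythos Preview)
Your proposal is correct in outline and arrives at the same monotone quantity, but the monotonicity proof follows a genuinely different route from the paper's. Your $F(s)$ coincides with the paper's $\Phi(\tau)$ under $s=1/\tau$, once one identifies $\tfrac{1}{|\nabla u|}S_k^{ij}u_iu_j = |\nabla u|^k\sigma_{k-1}(\kappa)$ on each level set (this is the content of the cited Lemma~2.2 in [GS]); your asymptotic evaluation of $\lim_{s\to 0^-}F(s)$ and the rigidity sketch via Newton--MacLaurin equality are both accurate.

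Where the argument differs is in the monotonicity step. You propose to differentiate the level-set integral directly via the first-variation formula, eliminate $u_{\nu\nu}$ using the block decomposition of $S_k(D^2u)=0$, handle $\nu(\sigma_{k-1}(\kappa))$ via Codazzi, and close with Newton--MacLaurin; this is the AFM template transcribed to higher $k$. The paper instead avoids differentiating curvatures altogether: it rewrites $\Phi(\tau)-\Phi(-\infty)$ as a bulk integral via co-area and the divergence-free identity $\partial_j S_k^{ij}=0$, then shows the resulting integrand is a complete square plus a term controlled by the Kato-type inequality
\[
\sum_m S_k^{ij}u_{im}u_{jm}\;\ge\;\frac{n}{n-k}\,S_k^{ij}\,\partial_i|\nabla u|\,\partial_j|\nabla u|,
\]
proved separately (Proposition~\ref{Kato}) for $k$-admissible solutions of $S_k=0$. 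The paper's route has two practical advantages: it sidesteps the normal variation of $\sigma_{k-1}(\kappa)$, which in your scheme is a third-order quantity and requires care given that $u$ is only $C^{1,1}$; and it isolates the sharp threshold $\beta \ge (n-2k)/(n-k)$ cleanly as the single condition $l\beta\ge 1$ making one quadratic term nonnegative, rather than having it emerge from the interaction of Newton--MacLaurin with the tangential-gradient form. Your approach should also go through, but expect the algebra after eliminating $u_{\nu\nu}$ to be heavier than the sketch suggests, and be prepared to run the computation on the smooth approximations $u_R^{\epsilon}$ to justify the third-order differentiation.
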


The classical Minikowski inequality \cite{Min} proved in early 20 century states: If $\Omega\subset\R^3$ is a convex domain with smooth boundary, then
\[\frac{1}{|\mathbb{S}^{n-1}|}\int_{\p\Omega}\frac{H}{n-1}d\sigma\geq\lt(\frac{|\p\Omega|}{|\mathbb{S}^{n-1}|}\rt)^\frac{n-2}{n-1}.\]
Since then, the Minkowski inequality has been studied intensively  by various authors under different settings (see \cite{BHW, GL09, GMTZ, GaS, Wei} and references therein).
Note that the classical Minikowski inequality gives a lower bound for the total mean curvature of $\p\Omega.$ Here we generalize this inequality and give a lower bound for the weighted total $\s_{k-1}$ curvature of $\p\Omega.$

We now give an outline of this paper. In order to solve equation \eqref{k-hessian} and study the asymptotic behavior of the solution, we constructed the following approximating Dirichlet problem in Section \ref{tap},
\begin{equation}\label{int-ed1}
\left\{
\begin{aligned}
&S_k(D^2 u)=f_{\epsilon}\ \ {\rm in}\ \   \  B_R\setminus\bar{\Omega}\\
&u=-1\ \ {\rm on} \ \ \partial\Omega\\
&u(x)=\phi(x,C_0)\ \ {\rm on} \ \ \partial B_R.
\end{aligned}\right.
\end{equation}
Here $f_\e\goto 0$ as $\e\goto 0$ and $\Phi(x, C_0)$ is a supersolution of \ref{k-hessian}. We show the solvability of \eqref{int-ed1}
in Sections \ref{section-cs} and \ref{section sap}. The solvability of \eqref{k-hessian} then follows by standard arguments and the first part of Theorem \ref{int-thm2} is proved.

We want to point out that Section \ref{section-cs} is the most important and innovative part of this paper, where we explicitly constructed a subsolution to equation \eqref{k-hessian} in a large neighborhood of $\p\Omega.$
Note that, here $\p\Omega$ is $(k-1)$-convex, the usual distance function (that is used for constructing subsolutions) can only be defined in a very small neighborhood of $\p\Omega,$ which is not sufficient for solving this problem. Therefore, we have to utilize the additional assumption that $\p\Omega$ is starshaped, and find a replacement of the distance function. We believe that, the idea developed in this paper will be useful in solving similar problems in warped product spaces.

In Sections \ref{section de} and \ref{section ab}, we establish the desired asymptotic behavior of the solution $u$ of \eqref{k-hessian} as $|x|\goto \infty.$ This proves the second half of Theorem \ref{int-thm2}.

In Section \ref{section mq} we show for the solution $u$ of \eqref{k-hessian}, when $\beta\ge \frac{n-2k}{n-k},$
\[\Phi(\tau)=\int_{\{u=1/\tau\}}  \frac{1}{|\n u|}S_k^{ij}u_iu_j\left(\frac{|\n u|}{(-u)^{\frac{k-n}{2k-n}}}\right)^{\beta}  dx\]
is monotone. Combining the monotonicity of $\Phi(\tau)$ with Theorem \ref{int-thm2}, we obtain Corollary \ref{int-cor1}.
\section*{Acknowledgements}
The author is grateful to Zhizhang Wang for his interest and for many fruitful discussions. The author is grateful to Chao Xia for contributions of Section \ref{section mq} and for bringing this problem to her attention. Without their help, this paper would not have been possible.
The author is also grateful to Joel Spruck for carefully reading the draft and for useful comments to improve the paper.

\section{The approximate problem}
\label{tap}
Equation \eqref{k-hessian} is a degenerate equation defined on a non-compact domain. It is natural to approach it using a sequence of non-degenerate equations defined on compact domains $\bar{B}_R\setminus\Omega$. The delicacy here is that we want our approximate problem keeping the asymptotic behavior of the solution $u.$ More specifically, assume $u_R$ is a solution of the approximate problem defined on $\bar{B}_R\setminus\Omega,$ we want $u_R$ satisfying $|u_R|=O(R^{2-\frac{n}{k}})$ on $\p B_R.$
\subsection{Global barriers}
\label{gb}
Inspired by the rotationally symmetric solution to \eqref{k-hessian},
for any fixed small $\e_0>0$ and $0<\epsilon<\epsilon_0$, we let
$$\phi(x, C)=-C(|x|+\epsilon)^{2-\frac{n}{k}}.$$
Then a straightforward calculation shows
$$\phi_i=C\left(\frac{n}{k}-2\right)(|x|+\epsilon)^{1-\frac{n}{k}}\frac{x_i}{|x|},$$ and
$$\phi_{ij}=C\left(\frac{n}{k}-2\right)(|x|+\epsilon)^{-\frac{n}{k}}\left[(|x|+\epsilon)\left(\frac{\delta_{ij}}{|x|}-\frac{x_ix_j}{|x|^3}\right)
+\left(1-\frac{n}{k}\right)\frac{x_ix_j}{|x|^2}\right].$$
We can see that the eigenvalues of $D^2\phi$ are
$$C\left(\frac{n}{k}-2\right)(|x|+\epsilon)^{-\frac{n}{k}}\left(1-\frac{n}{k}, 1+\frac{\epsilon}{|x|}, 1+\frac{\epsilon}{|x|},\cdots, 1+\frac{\epsilon}{|x|}\right).$$
Therefore, we get
$$\sigma_k(D^2\phi)=\left(1+\frac{\epsilon}{|x|}\right)^{k-1}\left(\begin{matrix}n-1\\ k\end{matrix}\right)\frac{\epsilon}{|x|}\left(\frac{1}{|x|+\epsilon}\right)^n\left(\frac{n}{k}-2\right)^kC^k.$$
Denote $f_{\epsilon}=\sigma_k(D^2\phi(x, 1)),$ it is clear that when $n>2k$, we have $f_{\epsilon}>0$.
Moreover, consider
\begin{equation}\label{k-hessian-e}
\left\{
\begin{aligned}
&S_k(D^2 u)=f_{\epsilon}\ \ {\rm in}\ \ \mathbb R^n\setminus\bar{\Omega}\\
&u=-1\ \ {\rm on} \ \ \partial\Omega\\
&u(x)\rightarrow 0\ \ {\rm as}\ \ |x|\rightarrow\infty,
\end{aligned}\right.
\end{equation}
we can see that for $0<C_0<1<C_1,$ $\phi(x,C_0), \phi(x,C_1)$ are super- and sub-solutions of \eqref{k-hessian-e} respectively. Here, $C_0, C_1$ are chosen such that
$\phi(x, C_0)>-1$ on $\p\Omega$ and $\phi(x, C_1)<-1$ on $\p\Omega.$ In this paper, $C_0$ and $C_1$ are fixed constants.

\subsection{The approximate problem} By discussions in subsection 2.1, in the next two sections, we will study the solvability of the following approximate Dirichlet problem:
\begin{equation}\label{ed1}
\left\{
\begin{aligned}
&S_k(D^2 u)=f_{\epsilon}\ \ {\rm in}\ \   \  B_R\setminus\bar{\Omega}\\
&u=-1\ \ {\rm on} \ \ \partial\Omega\\
&u(x)=\phi(x,C_0)\ \ {\rm on} \ \ \partial B_R,
\end{aligned}\right.
\end{equation}
where $B_R$ is an $n$-dimensional ball with radius $R$ centered at $0$.

It turns out that the existence of the solution to equation \eqref{ed1} is highly non-trivial. The biggest challenge here is the $C^2$ boundary estimates on $\p\Omega.$ It is well known that in order to obtain $C^2$ boundary estimates for Dirichlet problems, we need to construct a subsolution of the Dirichlet problem
in a neighborhood of its boundary (see \cite{CNS3, Guan98, Guan99, GuanLi96} for example). We also know that, in literature, the most common way to construct such subsolution is to use the distance function (see \cite{GT83} for example).
However, here $\p\Omega$ is $(k-1)$-convex, the distance function can only be well defined in a small neighborhood of $\p\Omega.$
Since we do not have good control on the height of the solution $u_R$ of \eqref{ed1} near $\p\Omega,$ a distance function defined near $\p\Omega$ is not sufficient to construct a desired subsolution. In Section \ref{section-cs} we will address this problem.

\section{Construction of the subsolution of \eqref{ed1}}
\label{section-cs}
This section is the most important section in this paper. We discovered a replacement of the distance function for starshaped domain $\Omega$. This function behaves like a distance function near the boundary $\p\Omega$ and is well defined in $\R^n\setminus\Omega.$
\subsection{Hessian in the spherical coordinate}
\label{hess}
Let $f: \R^n\goto \R$ be a scalar function, then $f$ can also be expressed as a function of $(\theta, r)\in\mathbb{S}^{n-1}\times\R.$
Note that $g_E=r^2dz^2+dr^2,$ where $dz^2$ is the standard metric on $\mathbb{S}^{n-1}.$ In the following, we will denote the standard connection by $D$.
Now, we choose a local orthonormal frame $\{e_1, \cdots, e_{n-1}\}$ on the unit sphere $\mathbb{S}^{n-1}$. Let $\tau_{a}=\dfrac{e_{a}}{r}$, $1\leq a\leq n-1,$ which is the orthonormal frame on the sphere with radius $r,$ and we also let $\tau_r=\frac{\p}{\p r}$.
Then we have
\be\label{hess1}
Df=f_r\tau_r+\sum\limits_{a=1}^{n-1}\frac{1}{r}f_{a}\tau_{a},
\ee
where $f_r=\tau_r f, f_{a}=e_{a}f$,
and
\[
\begin{aligned}
D^2f&=D_{\tau_r}\lt(f_r\tau_r+\sum\limits_{a=1}^{n-1}\frac{1}{r}f_{a}\tau_{a}\rt)\otimes \tau_r
+\sum\limits_{b=1}^{n-1}\frac{1}{r}D_{e_{b}}\lt(f_r\tau_r+\sum\limits_{\alpha=1}^{n-1}\frac{1}{r}f_{a}\tau_{a}\rt)\otimes \tau_{b}.
\end{aligned}
\]
Since $r\tau_r$ is the position vector and the principal curvature of the radius $r$ sphere is $1/r$, we get for $1\leq a, b\leq n-1,$
\[D_{\tau_r}\tau_r=0,\,\, D_{\tau_{a}}(r\tau_r)=\tau_{a},\]
\[ D_{\tau_r}\tau_{a}=-\frac{1}{r^2}e_{a}+\frac{1}{r}D_{e_{a}}\tau_r=0,\,\,\mbox{and $D_{\tau_{b}}\tau_{a}=-\frac{1}{r}\tau_r\delta_{ab}.$}\]

Thus, the Hessian of $f$ is
\be\label{hess1.1}D^2_{ab}f=D^2f(\tau_{a},\tau_{b})=\frac{1}{r^2}f_{ab}+\frac{1}{r}f_r\delta_{ab},\ee
\be\label{hess1.2}D^2_{a r}f=D^2f(\tau_{a},\tau_r)=\frac{1}{r}f_{a r}-\frac{1}{r^2}f_{a},\ee
\be\label{hess1.3}D^2_{rr}f=D^2f(\tau_r,\tau_r)=f_{rr},\ee
where $1\leq a, b\leq n-1,$ $f_{ab}=e_{b}e_{a}f, f_{a r}=\tau_re_{a} f,$ and $f_{rr}=\tau_r\tau_rf$. Below, we will denote $f_n=\frac{\p f}{\p r}.$

\subsection{Construction of the subsolution}
\label{cons-sub}
Recall that we have assumed $\Gamma:=\p\Omega$ is starshaped and $(k-1)$-convex. We can parametrize $\Gamma$ as a graph of the radial function
$\rho(\theta):\mathbb{S}^{n-1}\goto\R,$ i.e., $\Gamma=\lt\{\rho(\theta)\theta\mid \theta\in\mathbb{S}^{n-1}\rt\}.$
Same as in Subsection \ref{hess}, suppose $\{e_1,\cdots, e_{n-1}\}$ is an orthonormal frame on the unit sphere. Then the second fundamental form of $\Gamma$ is
\[h_{ij}=\frac{\rho}{w}\lt(\delta_{ij}+2\frac{\rho_i\rho_j}{\rho^2}-\frac{\rho_{i,j}}{\rho}\rt),\]
where $w=\sqrt{1+\frac{|\nabla\rho|^2}{\rho^2}},$ $\rho_{i,j}=\nabla_{ij}\rho,$ and $\nabla$ denotes the Levi-Civita connection on $\mathbb{S}^{n-1}.$  Since $\nabla_{e_i}e_j=0$, we get $\nabla_{ij}\rho=e_je_i\rho$. In the following, for any function $f$ defined on the unit sphere, we denote $f_{ij}=e_je_i f,$ then we have $\rho_{i,j}=\rho_{ij}.$

We define $\varphi=\log\rho,$ it is clear that the second fundamental form of $\Gamma$ can be expressed as follows.
\[h_{ij}=\frac{\rho}{w}\lt(\delta_{ij}+\varphi_i\varphi_j-\varphi_{ij}\rt),\]
where $w=\sqrt{1+|\nabla\varphi|^2}.$ By a direct calculation, we also obtain
$$g_{ij}=\rho^2(\delta_{ij}+\varphi_i\varphi_j),\,\,
g^{ij}=\frac{1}{\rho^2}\lt(\delta_{ij}-\frac{\varphi_i\varphi_j}{w^2}\rt),\,\,\mbox{and $\gamma^{ij}=\frac{1}{\rho}\lt(\delta_{ij}-\frac{\varphi_i\varphi_j}{w(1+w)}\rt)$}.$$
Here $\gamma^{ij}$ is the square root of $g^{ij}.$
Let $a_{ij}=\gamma^{ik}h_{kl}\gamma^{lj},$ then the eigenvalues of $(a_{ij})_{1\leq i, j\leq n-1},$ denoted by $\kappa[a_{ij}]=(\kappa_1, \cdots, \kappa_{n-1})$ are the principal curvatures of $\Gamma.$

Now, at any point $p\in \mathbb{S}^{n-1},$ we may rotate the coordinate such that $|\nabla\rho|=\rho_1$ and $\rho_{\al\beta}=\rho_{\al\al}\delta_{\al\beta}$
for $2\leq\al,\beta\leq n-1.$ Then at the point $\hat{p}=\rho(p)p\in\Gamma$ we have
\[
\left\{
\begin{aligned}
\gamma^{11}&=\frac{1}{\rho}\lt(1-\frac{w^2-1}{w(1+w)}\rt)=\frac{1}{\rho w},\\
\gamma^{1\al}&=0,\,\,&2\leq\al\leq n-1,\\
\gamma^{\al\beta}&=\frac{1}{\rho}\delta_{\al\beta},\,\,&2\leq\al, \beta\leq n-1,
\end{aligned}
\right.
\]
and
\[
\left\{
\begin{aligned}
a_{11}&=\gamma^{1k}h_{kl}\gamma^{l1}=\gamma^{11}h_{11}\gamma^{11}=\frac{h_{11}}{\rho^2 w^2},\\
a_{1\al}&=\gamma^{1k}h_{kl}\gamma^{l\alpha}=\frac{h_{1\al}}{\rho^2 w},\,\,&2\leq\al\leq n-1,\\
a_{\al\beta}&=\gamma^{\al\al}h_{\al\beta}\gamma^{\beta\beta}=\frac{1}{\rho^2}h_{\al\beta},\,\,&2\leq\al, \beta\leq n-1.
\end{aligned}
\right.
\]
We consider $$g=r\rho^{-1},$$ and we will compute the Hessian of $g$ at an arbitrary point $(p, r)\in\lt(\mathbb{S}^{n-1}\times\R\rt)\setminus\{0\}.$
A straightforward calculation yields
\[g_1=e_1g=-r\rho^{-2}\rho_1,\,\,g_{1a}=e_{a}e_1 g=-r\lt(-2\rho^{-3}\rho_1\rho_{a}+\rho^{-2}\rho_{1a}\rt)\,\,\mbox{ for $1\leq a\leq n-1,$}\]
\[g_{1\al}=-r\rho^{-2}\rho_{1\al}\,\,\mbox{for $2\leq\al\leq n-1,$}\]
\[g_\al=e_{\al}g=0,\,\,g_{\al\beta}=e_{\beta}e_{\al}g=-r\rho^{-2}\rho_{\al\beta}\,\,\mbox{for $2\leq\al, \beta\leq n-1,$}\]
and
\[g_n=\frac{\p g}{\p r}=\rho^{-1},\,\, g_{nn}=\frac{\p^2 g}{\p r^2}=0,\,\, g_{1n}=\frac{\p g_1}{\p r}=-\rho^{-2}\rho_1,\,\, g_{\al n}=\frac{\p g_{\al}}{\p r}=0.\]
Following the notation in Subsection \ref{hess}, suppose $\tau_{a}=\frac{e_{a}}{r}, 1\leq a\leq n-1$ and $\tau_r=\frac{\p}{\p r},$ then $\{\tau_1, \cdots, \tau_{n-1}, \tau_r\}$ forms an orthonormal frame at $(p,r)$. By virtue of \eqref{hess1.1}, \eqref{hess1.2}, and \eqref{hess1.3} we get,
\begin{eqnarray}
D^2_{11}g&=&D^2g(\tau_1,\tau_1)=\frac{1}{r^2}g_{11}+\frac{1}{r\rho}=\frac{w^3}{r}a_{11}\label{cs2.1},\\
D^2_{1\al}g&=&D^2g(\tau_1,\tau_{\al})=\frac{1}{r^2}g_{1\al}=\frac{w^2}{r}a_{1\al},\,\,2\leq\al\leq n-1,\label{cs2.2}\\
D^2_{\al\beta}g&=&D^2g(\tau_{\al},\tau_{\beta})=\frac{1}{r^2}g_{\al\beta}+\frac{\delta_{\al\beta}}{r\rho}=\frac{w}{r}a_{\al\beta},\,\, 2\leq\al, \beta\leq n-1,\label{cs2.3}\\
D^2_{ij}g&=&D^2g(\tau_i,\tau_j)=0,\,\, 1\leq i, j\leq n \,\,\mbox{for all other cases.}\nonumber
\end{eqnarray}
We also notice that at $p$ we have
\[h_{\al\beta}=\frac{\rho}{w}\lt(\delta_{\al\beta}-\frac{\rho_{\al\beta}}{\rho}\rt)=h_{\al\al}\delta_{\al\beta},\,\,2\leq\al, \beta\leq n-1,\]
this implies $a_{\al\beta}=a_{\al\al}\delta_{\al\beta}$ is diagonalized.
Therefore, at $(p, r)$ we obtain
\[
\begin{aligned}
\text{Hessian}(g)&=\left[\ju{ccccc}{\frac{w^3}{r}a_{11}&\frac{w^2}{r}a_{12}&\cdots&\frac{w^2}{r}a_{1n-1}&0\\
\frac{w^2}{r}a_{12}&\frac{w}{r}a_{22}&\cdots&0&0\\
\vdots&\vdots&\ddots&\vdots\\
\frac{w^2}{r}a_{an-1}&0&\cdots&\frac{w}{r}a_{n-1n-1}&0\\
0&0&\cdots&0&0}\right].
\end{aligned}
\]
We want to point out that $\kappa[a_{ij}]$ are the principal curvatures of $\Gamma$ at $\hat{p}=\rho(p)p.$

Let's consider the function $\phi=\phi(g),$ then $\phi$ is also a function defined on $\lt(\mathbb{S}^{n-1}\times\R\rt)\setminus\{0\}.$
We will compute the Hessian of $\phi$ at $(p, r).$ Denote $\phi'(p, r)=\p_g\phi=A,$ $\phi''(p, r)=\p_{gg}\phi=B,$ we get at this point, for $1\leq i,j\leq n$,
$$D^2_{ij}\phi=AD^2_{ij}g+B(\tau_ig)(\tau_jg).$$
We note that $\rho_a=e_a\rho,$ it is easy to compute
 \[
\begin{aligned}
\text{Hessian}(\phi)&=\left[\ju{ccccc}{\frac{Aw^3}{r}a_{11}+B\rho^{-4}\rho_1^2&\frac{Aw^2}{r}a_{12}&\cdots&\frac{Aw^2}{r}a_{1n-1}&-B\rho^{-3}\rho_1\\
\frac{Aw^2}{r}a_{12}&\frac{Aw}{r}a_{22}&\cdots&0&0\\
\vdots&\vdots&\ddots&\vdots\\
\frac{Aw^2}{r}a_{1n-1}&0&\cdots&\frac{Aw}{r}a_{n-1n-1}&0\\
-B\rho^{-3}\rho_1&0&\cdots&0&B\rho^{-2}}\right].
\end{aligned}
\]

In the following, we will compute $\s_k(D^2\phi).$ For our convenience, in the rest of this section we will always assume $2\leq\al, \beta, \gamma\leq n-1,$
lower case letters $2\leq i, j, m, l, s\leq n,$ and $1\leq \hat{i}, \hat{j}\leq n-1.$
\[
\begin{aligned}
&\s_k(D^2\phi)=\phi_{11}\s_{k-1}(\phi_{ij})+\s_k(\phi_{ij})-\sum\limits_{s=2}^n\phi_{1s}^2\s_{k-2}(\phi_{ij}|\phi_{ss})\\
&=\phi_{11}\lt[\s_{k-1}(\phi_{\al\beta})+B\rho^{-2}\s_{k-2}(\phi_{\al\beta})\rt]+\lt[\s_k(\phi_{\al\beta})+B\rho^{-2}\s_{k-1}(\phi_{\al\beta})\rt]\\
&-\sum\limits_{\al=2}^{n-1}\phi^2_{1\al}\lt[\s_{k-2}(\phi_{\beta\gamma}|\phi_{\al\al})+B\rho^{-2}\s_{k-3}(\phi_{\beta\gamma}|\phi_{\al\al})\rt]
-\phi^2_{1n}\s_{k-2}(\phi_{\al\beta})\\
&=\lt(A\frac{w^3}{r}a_{11}+B\rho^{-4}\rho_1^2\rt)\lt[\lt(\frac{Aw}{r}\rt)^{k-1}\s_{k-1}(a_{\al\beta})
+B\rho^{-2}\lt(\frac{Aw}{r}\rt)^{k-2}\s_{k-2}(a_{\al\beta})\rt]\\
&+\lt(\frac{Aw}{r}\rt)^k\s_k(a_{\al\beta})+B\rho^{-2}\lt(\frac{Aw}{r}\rt)^{k-1}\s_{k-1}(a_{\al\beta})\\
&-\sum\limits_{\al=2}^{n-1}\lt(\frac{Aw^2}{r}\rt)^2\lt(\frac{Aw}{r}\rt)^{k-2}a^2_{1\al}\s_{k-2}(a_{\beta\gamma}|a_{\al\al})\\
&-B\rho^{-2}\sum\limits_{\al=2}^{n-1}\lt(\frac{Aw^2}{r}\rt)^2\lt(\frac{Aw}{r}\rt)^{k-3}a^2_{1\al}\s_{k-3}(a_{\beta\gamma}|a_{\al\al})\\
&-B^2\rho^{-6}\rho_1^2\lt(\frac{Aw}{r}\rt)^{k-2}\s_{k-2}(a_{\al\beta})\\
&=\frac{A^kw^{k+2}}{r^k}[\s_k(a_{\hat{i}\hat{j}})-\s_k(a_{\al\beta})]
+B\rho^{-2}\frac{A^{k-1}w^{k+1}}{r^{k-1}}\s_{k-1}(a_{\hat{i}\hat{j}})+\lt(\frac{Aw}{r}\rt)^k\s_k(a_{\al\beta})\\
&\geq-c_0\frac{A^k}{r^k}+c_1\frac{BA^{k-1}}{\rho^2r^{k-1}}.
\end{aligned}
\]
Here $c_1=\min\limits_{\hat{q}\in\Gamma}\s_{k-1}(\kappa_{\Gamma}(\hat{q}))>0$ and $c_0=c_0(|\rho|_{C^2})$ are two positive constants only depending on
$\Gamma.$
Since $(p, r)$ is an arbitrary point in $\lt(\mathbb S^{n-1}\times\mathbb R\rt)\setminus\{0\},$  consider $\phi(g)=g^N,$ the above calculation gives
\be\label{cs2.4}
\s_k(D^2\phi)\geq\frac{N^kg^{kN-k}}{r^k}\lt[-c_0+\frac{c_1(N-1)}{\rho}\rt]\,\,\mbox{in $(\mathbb S^{n-1}\times\mathbb R)\setminus\{0\}$.}
\ee
Notice that $g\geq 1$ in $\mathbb R^n\setminus\Omega,$ choosing $N=N(\Gamma)>0$ large we have
\begin{eqnarray}\label{sk}
\s_k(D^2\phi)\geq\frac{1}{r^k}\,\,\mbox{in $\R^n\setminus\bar{\Omega}$}.
\end{eqnarray}
Moreover, \be\label{sk1}
\phi=1\,\,\mbox{on $\Gamma=\p\Omega.$}
\ee
In Section \ref{section sap}, we will use $\phi$ to construct the subsolution to equation \eqref{ed1},
which is crucial for obtaining a priori estimates that are needed for solving equation \eqref{ed1}.

\section{Solvability of the approximate problem}
\label{section sap}
Let us consider the following Dirichlet problem (i.e., equation \eqref{ed1})
\begin{equation}\label{ed1*}
\left\{
\begin{aligned}
&S_k(D^2 u)=f_{\epsilon}\ \ {\rm in}\ \   \  B_R\setminus\bar{\Omega}\\
&u=-1\ \ {\rm on} \ \ \partial\Omega\\
&u(x)=\phi(x,C_0)\ \ {\rm on} \ \ \partial B_R,
\end{aligned}\right.
\end{equation}
where $f_\e=\left(1+\frac{\epsilon}{|x|}\right)^{k-1}\left(\begin{matrix}n-1\\ k\end{matrix}\right)\frac{\epsilon}{|x|}\left(\frac{1}{|x|+\epsilon}\right)^n\left(\frac{n}{k}-2\right)^k,$ $B_R$ is an $n$-dimensional ball with radius $R$ centered at $0,$ and
$\Omega\subset\R^n$ is a $(k-1)$-convex, starshaped domain. For our convenience,
in the rest of this paper, we will always denote $\mathbf{\al_0:=\frac{n}{k}-2}>0.$
In this section, we prove
\begin{theorem}
\label{sap-thm1}
Given $\e_0>0$ small and $R_0>0$ large, then for any $0<\e<\e_0,$ $R>R_0$ there is a unique, $k$-convex solution $u_R^\e\in C^2(B_R\setminus\bar{\Omega})\cap C^{1, 1}(\bar{B}_R\setminus\Omega)$ satisfying \eqref{ed1*}.
\end{theorem}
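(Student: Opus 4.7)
The plan is to apply the method of continuity in the $k$-admissible class, deriving uniform a priori estimates via explicit sub- and supersolutions. The supersolution is $\bar u(x) := \phi(x, C_0)$: it is smooth and $k$-admissible on $B_R \setminus \bar\Omega$, satisfies $S_k(D^2 \bar u) \le f_\e$, equals $\phi(x, C_0)$ on $\partial B_R$, and lies above $-1$ on $\partial \Omega$ by the choice of $C_0$. The subsolution $\underline u$ is built from the function $\phi = g^N$ of Section~\ref{section-cs} via a suitable decreasing composition $\underline u = -h(\phi)$: with $h$ and $N$ chosen depending on $\Gamma$, $\e$, and $R$, one has $\underline u = -1$ on $\partial \Omega$, $\underline u \le \phi(\cdot, C_0)$ on $\partial B_R$, and the lower bound \eqref{sk} combined with a direct computation yields $S_k(D^2 \underline u) \ge f_\e$ in $B_R \setminus \bar \Omega$. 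From $\underline u \le u \le \bar u$ on $\partial(B_R \setminus \bar\Omega)$, the comparison principle for $k$-Hessian equations gives the $C^0$ bound; sliding $\underline u$ and $\bar u$ against $u$ in the normal direction yields the boundary gradient bound on $\partial \Omega$, and on $\partial B_R$ the gradient bound is explicit. The global $C^1$ bound then follows from a standard maximum-principle argument on a test function involving $|Du|^2$, using the concavity of $S_k^{1/k}$ on $\Gamma_k$.

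The essential obstacle, and the motivation for Section~\ref{section-cs}, is the boundary $C^2$ estimate on $\partial \Omega$. The tangential and mixed second derivatives are obtained in the standard way, by twice differentiating the Dirichlet datum and invoking the tangential form of the equation as in \cite{CNS3, Guan98}; this step requires only $(k-1)$-convexity of the boundary. For the double normal derivative I would construct, as in \cite{Guan99, GuanLi96}, a barrier $v := A(\underline u - u) + Bd - Cd^2$ in a thin tubular neighborhood $\{d < \delta\}$ of $\partial \Omega$, where the standard signed distance function $d$ is well defined. The linearized operator $\mathcal L := S_k^{ij}(D^2 u)\, \partial_{ij}$ applied to $v$ is controlled from below through the key term $A\, S_k^{ij}\underline u_{ij}$, which is positive and large thanks to \eqref{sk}; this allows one to absorb the bad quadratic and Newton-trace terms and conclude $|u_{nn}| \le C$ on $\partial \Omega$. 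The $C^2$ estimate on $\partial B_R$ is easier since $\phi(\cdot, C_0)$ is itself $k$-admissible near $\partial B_R$.

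Once the boundary $C^2$ bound is in hand, the interior $C^2$ estimate follows from the classical maximum-principle argument for non-degenerate $k$-Hessian equations, with constants depending on $\|f_\e\|_{C^2(\bar B_R \setminus \Omega)}$, which is finite for each fixed $\e > 0$. Uniform $C^2$ bounds make the linearization uniformly elliptic on the $k$-admissible cone, so Evans--Krylov together with Schauder theory upgrade the solution to $C^{2,\alpha}$. Existence is then obtained by the method of continuity, deforming the right-hand side from a solvable reference problem (for example a suitable multiple of $\phi(x,1)$): openness comes from invertibility of the linearization on the admissible branch via Schauder estimates, while closedness follows from the a priori bounds. Uniqueness in the $k$-admissible class is immediate from the comparison principle, and the stated $C^{1,1}(\bar B_R \setminus \Omega)$ regularity is built into the uniform $C^2$ estimates. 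The reason the $C^2$ boundary estimate on $\partial \Omega$ is genuinely the hard step is that $\partial \Omega$ is only $(k-1)$-convex, so the textbook distance-function subsolution is not $k$-admissible and only exists in a small neighborhood; precisely this obstruction is what the subsolution built from $g = r\rho^{-1}$ in Section~\ref{section-cs} is designed to overcome.
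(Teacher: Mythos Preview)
Your overall strategy matches the paper's: comparison with $\phi(x,C_0)$ and $\phi(x,C_1)$ for the $C^0$ bound, boundary gradient bounds from barriers, a global gradient bound via a maximum-principle argument, and then boundary and global $C^2$ estimates feeding into Evans--Krylov and the method of continuity. You have also correctly identified that the subsolution built from $g = r\rho^{-1}$ in Section~\ref{section-cs} is the crux of the $C^2$ boundary estimate on $\p\Omega$.

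However, you have the two nontrivial steps of that $C^2$ boundary estimate swapped. The mixed derivatives $(u_R)_{\al n}$ are \emph{not} obtained ``by twice differentiating the Dirichlet datum'': that yields only the purely tangential $(u_R)_{\al\beta}$ via \eqref{ed3*}. The mixed derivatives require precisely the barrier technique you assign to the double normal --- in the paper this is Lemma~\ref{C2-inside-mix-lem}, where $h = u_R - \underline v + \theta|\tilde x|^2$ (with $\underline v$ the subsolution coming from Section~\ref{section-cs}) dominates $\pm T u_R$ in a half-ball and yields $|(Tu_R)_n|\le C$. Conversely, the double normal $(u_R)_{\nu\nu}$ is \emph{not} obtained from a Guan-type barrier $A(\underline u - u) + Bd - Cd^2$; it follows directly by expanding $S_k(D^2 u_R)=f_\e$ on $\p\Omega$:
\[
|D_\nu u_R|^{k-1} S_{k-1}(\kappa)\, (u_R)_{\nu\nu} = f_\e - |D_\nu u_R|^k S_k(\kappa) + \sum_{\al} |D_\nu u_R|^{k-2} S_{k-2}(\kappa|\al)\, (u_R)_{\al\nu}^2,
\]
which bounds $(u_R)_{\nu\nu}$ once the mixed terms are controlled, \emph{because} $S_{k-1}(\kappa)>0$ by $(k-1)$-convexity and $D_\nu u_R$ is bounded below (Remark~\ref{rmk3.1}). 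Thus $(k-1)$-convexity is what makes the double-normal step solvable for $(u_R)_{\nu\nu}$, while the Section~\ref{section-cs} subsolution is what supplies the barrier for the mixed step. With this correction your argument goes through and coincides with the paper's.
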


In the following, when there is no confusion, we will drop the superscript $\e$ and write $u_R$ instead of $u_R^\e.$ We also want to point out that, in this section we will establish some fairly accurate $C^1$ and $C^2$ estimates for $u_R$ on $\p B_R.$ Later, we will see that we need these accurate estimates for studying the asymptotic behavior of the solution $u$ of \eqref{k-hessian}.

Applying the maximum principle, the $C^0$ estimate of $u_R$ follows from the existence of the global barriers constructed in Subsection \ref{gb}.
\begin{lemma}
\label{C0-lem}
(\textbf{$C^0$ bounds of $u_R$})
Let $\ur$ be the solution of \eqref{ed1*}, then $\ur$ satisfies
\[\phi(x, C_1)<\ur<\phi(x, C_0),\,\,\mbox{in $B_R\setminus\bar{\Omega}.$}\]
\end{lemma}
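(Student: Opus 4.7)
The plan is to apply the standard comparison principle for the $k$-Hessian operator, using the barriers $\phi(x,C_0)$ and $\phi(x,C_1)$ constructed in Subsection \ref{gb} as the super- and sub-solution of \eqref{ed1*}, respectively. The key computation is already contained in Subsection \ref{gb}: since
\[
\sigma_k(D^2\phi(x,C))=C^k f_\epsilon,
\]
the hypothesis $0<C_0<1<C_1$ immediately gives $S_k(D^2\phi(x,C_0))=C_0^k f_\epsilon<f_\epsilon$ and $S_k(D^2\phi(x,C_1))=C_1^k f_\epsilon>f_\epsilon$. Moreover, the explicit formula for the eigenvalues of $D^2\phi$ together with $n>2k$ shows that $\phi(x,C_0)$ and $\phi(x,C_1)$ are $k$-admissible in $B_R\setminus\bar\Omega$.

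First I would compare boundary values. On $\partial B_R$, both $u_R$ and $\phi(x,C_0)$ equal $\phi(\cdot,C_0)$, while $\phi(x,C_1)=-C_1(R+\epsilon)^{2-n/k}<-C_0(R+\epsilon)^{2-n/k}=\phi(x,C_0)$. On $\partial\Omega$, the constants $C_0$ and $C_1$ were chosen in Subsection \ref{gb} precisely so that $\phi(x,C_1)<-1<\phi(x,C_0)$, and $u_R\equiv -1$ there. Hence on all of $\partial(B_R\setminus\bar\Omega)$ one has
\[
\phi(x,C_1)\le u_R\le \phi(x,C_0).
\]

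Next I would invoke the comparison principle for the $k$-Hessian operator on the G{\aa}rding cone $\Gamma_k$ (the operator is degenerate elliptic with respect to $k$-admissible functions, see e.g.\ \cite{CNS3}). Since $\phi(x,C_1)$ is a $k$-admissible subsolution of $S_k(D^2 v)=f_\epsilon$ and $u_R$ is the admissible solution with $\phi(x,C_1)\le u_R$ on the boundary, the comparison principle yields $\phi(x,C_1)\le u_R$ throughout $B_R\setminus\bar\Omega$. The symmetric argument (with $\phi(x,C_0)$ as supersolution) gives $u_R\le \phi(x,C_0)$.

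Finally, for the strict inequalities claimed in the lemma, I would note that if $u_R-\phi(x,C_0)$ attained the value $0$ at an interior point, by the strong maximum principle applied to the linearized operator $S_k^{ij}\partial_{ij}$ (which is uniformly elliptic in a neighborhood since both functions are strictly $k$-admissible away from the degenerate direction) the two functions would coincide on a connected component — but their boundary values already disagree on $\partial\Omega$. The same reasoning rules out interior contact with $\phi(x,C_1)$. The only non-routine point is to confirm that the linearized operator is genuinely elliptic along the path between the two admissible functions so that the strong maximum principle applies; this follows from the $k$-admissibility of both $u_R$ and $\phi(x,C_i)$ and the concavity of $S_k^{1/k}$ on $\Gamma_k$. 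No serious obstacle is expected here, as this is a textbook application of comparison for $k$-Hessian equations.
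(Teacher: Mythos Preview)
Your proposal is correct and follows exactly the approach the paper intends: the paper itself does not write out a proof but simply states that the $C^0$ estimate follows from the maximum principle together with the global barriers $\phi(x,C_0)$ and $\phi(x,C_1)$ constructed in Subsection~\ref{gb}. Your write-up just makes this explicit, and the details (the computation $S_k(D^2\phi(x,C))=C^k f_\epsilon$, the boundary comparisons, and the strong maximum principle for the strict inequalities) are all in order.
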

\subsection{$C^1$- estimates}
\label{c1-dirichlet}
\begin{lemma} (\textbf{$C^1$ upper bound on $\p\Omega$})
\label{C1-inside-upper-lem}
Let $\ur$ be the solution of \eqref{ed1*}, then on the boundary $\p\Omega$, $\ur$ satisfies
\[\frac{\p\ur}{\p\nu}<C,\]
where $\nu$ is the outward unit normal of $\p\Omega$ (pointing into $B_R\setminus\bar\Omega$), and $C=C(\p\Omega, n, k)$ only depends on $\p\Omega,$ $n,$ and $k.$
\end{lemma}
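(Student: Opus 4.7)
The plan is to compare $u_R$ from above with a strictly superharmonic function that agrees with it on $\p\Omega$, exploiting the fact that every $k$-admissible function is strictly subharmonic: since $\Gamma_k\subset\Gamma_1$, we have $\Delta u_R=S_1(D^2u_R)>0$ on $B_R\setminus\bar\Omega$. This bypasses having to build a $k$-admissible supersolution that vanishes to first order on $\p\Omega$, which would be painful while $\p\Omega$ is only $(k-1)$-convex.

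First I would fix $\delta=\delta(\p\Omega)>0$ small enough that the distance function $d(x)=\mathrm{dist}(x,\p\Omega)$ is $C^2$ on the one-sided tube $U_\delta=\{x\in B_R\setminus\bar\Omega:d(x)<\delta\}$, so in particular $\sup_{U_\delta}\Delta d\leq C(\p\Omega)<\infty$. By Lemma \ref{C0-lem}, $u_R\leq \phi(x,C_0)\leq M$ on $\bar U_\delta$ for some $M=M(\p\Omega,n,k)$. I would then choose $B>\sup_{U_\delta}\Delta d$ and $A$ large enough that $-1+(A/B)(1-e^{-B\delta})\geq M$, and set
\[
v(x)=-1+\frac{A}{B}\lt(1-e^{-Bd(x)}\rt)\qquad\text{on }\bar U_\delta.
\]
By construction, $v=-1=u_R$ on $\p\Omega$, $v\geq M\geq u_R$ on $\{d=\delta\}$, and
\[
\Delta v=Ae^{-Bd}(\Delta d-B)<0\qquad\text{on }U_\delta,
\]
so $v$ is strictly superharmonic, and both $A$ and $B$ depend only on $\p\Omega$, $n$, $k$.

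Combining strict subharmonicity of $u_R$ with strict superharmonicity of $v$ gives $\Delta(v-u_R)<0$ in $U_\delta$, while $v-u_R\geq 0$ on $\p U_\delta$ by construction, so the maximum principle forces $v\geq u_R$ throughout $\bar U_\delta$. Since the two functions agree on $\p\Omega$, one-sided differentiation in the inner normal direction $\nu$ yields
\[
\frac{\p u_R}{\p\nu}(x_0)\leq\frac{\p v}{\p\nu}(x_0)=A\qquad\text{for every }x_0\in\p\Omega,
\]
which is the claimed bound with $C=A=C(\p\Omega,n,k)$. The only obstruction is guaranteeing that $d$ is $C^2$ on $U_\delta$ and that $\Delta d$ is bounded there, both of which follow from the $C^2$-regularity of $\p\Omega$ alone. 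Crucially no $(k-1)$-convexity of $\p\Omega$ is needed for the upper bound; the $(k-1)$-convexity hypothesis only becomes indispensable when one turns to the lower bound on $\p_\nu u_R$ and to the $C^2$ estimate, for which the subsolution constructed in Section \ref{section-cs} is required.
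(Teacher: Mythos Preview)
Your argument is correct. The reduction to the Laplacian via $\Gamma_k\subset\Gamma_1$ is sound: since $S_k(D^2u_R)=f_\epsilon>0$ and $\lambda[D^2u_R]\in\bar\Gamma_k$, in fact $\lambda\in\Gamma_k$, so $\Delta u_R>0$ on $B_R\setminus\bar\Omega$. The distance-function barrier $v=-1+\frac{A}{B}(1-e^{-Bd})$ is then a legitimate superharmonic comparison function on the tube $U_\delta$, and the Laplacian maximum principle gives $\partial_\nu u_R\le A$ on $\partial\Omega$ with $A$ depending only on $\partial\Omega,n,k$. (A tiny remark: you may simply take $M=0$, since $u_R<\phi(x,C_0)<0$ everywhere; and for $R>R_0$ the level set $\{d=\delta\}$ lies safely inside $B_R$.)

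The paper proceeds differently. Rather than descending to the Laplacian, it stays with the $k$-Hessian operator and exploits the interior ball condition: for each $\xi\in\partial\Omega$ it picks $z_\xi\in\Omega$ with $\bar B_{r_0}(z_\xi)\cap\partial\Omega=\{\xi\}$ and uses the translated fundamental solution
\[
\bar u_\xi(x)=-C|x-z_\xi|^{-\alpha_0}+Cr_0^{-\alpha_0}-1,
\]
which satisfies $S_k(D^2\bar u_\xi)=0$, touches $u_R$ from above at $\xi$, and dominates $u_R$ on $\partial U_{\delta_0}$. The $k$-Hessian comparison principle then yields $\partial_\nu u_R(\xi)\le\partial_\nu\bar u_\xi(\xi)=C\alpha_0 r_0^{-\alpha_0-1}$. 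So the paper builds one barrier per boundary point using the explicit $k$-harmonic radial profile, whereas you build a single barrier on the whole tube using only superharmonicity. Your route is more elementary and makes transparent that the upper bound requires nothing beyond smoothness of $\partial\Omega$; the paper's route keeps everything native to the $k$-Hessian structure and reuses the same radial profile that drives the global barriers. Neither argument invokes $(k-1)$-convexity, confirming your closing observation.
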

\begin{proof}
Since $\p\Omega$ is smooth, we know there exists $r_0>0$ such that for any $\xi\in\p\Omega$ there is $z_\xi\in\Omega$
satisfying $\bar{B}_{r_0}(z_\xi)\cap\p\Omega=\xi,$ where $B_{r_0}(z_{\xi})$ is a ball centered at $z_{\xi}$ with radius $r_0$. Namely, the sphere $\p B_{r_0}(z_\xi)$ and $\p\Omega$ are tangent  at $\xi$. We also denote
\[U_{\delta_0}:=\lt\{x\in\Rn\setminus \bar{\Omega}\mid \text{dist}(x, \Omega)<\delta_0\rt\}.\]
Let $c_1:=\max\limits_{x\in\p U_{\delta_0}}\phi(x, C_0),$ then consider
\[\bar{u}_\xi=-C|x-z_\xi|^{-\a0}+Cr_0^{-\a0}-1.\] It's clear that
$\bar{u}_\xi(\xi)=-1$ and $\bar{u}_\xi(x)>-1$ for any $x\in\p\Omega\setminus\{\xi\}.$
On $\p U_{\delta_0}$ we have
\[\bar{u}_\xi\geq\frac{C}{r_0^{\alpha_0}}\lt[1-\lt(1+\frac{\delta_0}{r_0}\rt)^{-\a0}\rt]-1.\]
Choosing $C=C(\a0, \delta_0,\p\Omega)=\dfrac{(1+c_1)r_0^{\al_0}}{1-\lt(1+\delta_0/r_0\rt)^{-\al_0}},$ we get $\bar{u}_\xi\geq c_1$ on $\p U_{\delta_0}.$
Since $S_k(\nabla^2\bar{u}_\xi)=0,$ by the maximum principle we obtain $\bar{u}_\xi>\ur$ in $U_{\delta_0}\setminus\bar{\Omega}.$
This implies at $\xi$ we have
\[\frac{\p\ur}{\p\nu}<\frac{\p\bar{u}_\xi}{\p\nu}=C\al_0r_0^{-\al_0-1}=\frac{(1+c_1)\al_0r_0^{-1}}{1-\lt(1+\delta_0/r_0\rt)^{-\al_0}}.\]
Since $\xi\in\p\Omega$ is arbitrary, we prove this lemma.
\end{proof}

\begin{lemma}(\textbf{$C^1$ lower bound on $\p\Omega$})
\label{C1-inside-lower-lem}
Let $0<\e<\e_0$ and $R>(2C_1)^{\frac{1}{\al_0}}$ in equation \eqref{ed1*}, where $\e_0>0$ is a small constant depending on $\p\Omega$ and $C_1$ is the fixed constant defined in
Subsection \ref{gb}. Let $\ur$ be the solution of \eqref{ed1*}. Then on the boundary $\p\Omega$, $\ur$ satisfies
\[\frac{\p\ur}{\p\nu}>C,\]
where $\nu$ is the outward unit normal of $\p\Omega$ (pointing into $B_R\setminus\bar\Omega$), and $C=C(\p\Omega,n, k)$ only depends on $\p\Omega,$ $n,$ and $k.$
\end{lemma}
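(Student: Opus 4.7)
\smallskip

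\noindent\textbf{Proof plan.} The natural approach mirrors that of Lemma \ref{C1-inside-upper-lem}, but with sub- and super- interchanged: we want to construct a $k$-admissible \emph{subsolution} $\underline{u}$ of \eqref{ed1*} on a fixed tubular neighborhood
\[
U_{\delta_0} = \{x \in \R^n\setminus\bar\Omega : \operatorname{dist}(x,\Omega) < \delta_0\}
\]
of $\partial\Omega$, satisfying $\underline{u}|_{\partial\Omega} = -1$ and $\underline{u} \leq u_R$ on $\partial U_{\delta_0}$. The comparison principle for $k$-Hessian equations then forces $\underline{u} \leq u_R$ throughout $U_{\delta_0}\setminus\bar\Omega$; since equality holds on $\partial\Omega$, evaluating the one-sided difference along the outward normal $\nu$ gives
\[
\frac{\partial u_R}{\partial\nu}(\xi) \;\geq\; \frac{\partial \underline{u}}{\partial\nu}(\xi) \qquad \forall\, \xi \in \partial\Omega,
\]
and a direct computation will bound the right-hand side below by a positive constant depending only on $\partial\Omega$, $n$, and $k$.

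For the actual construction, the key input is the function $\phi = g^N$ with $g = r\rho^{-1}$ built in Section \ref{section-cs}. I will set
\[
\underline{u}(x) \;=\; -1 + c\,(g(x)^N - 1)
\]
for constants $c>0$ and $N = N(\Gamma)$ to be chosen. Three things need checking. \emph{(i) Boundary value:} since $g \equiv 1$ on $\partial\Omega$, we have $\underline{u}|_{\partial\Omega} = -1$ as required. \emph{(ii) Admissibility and the Hessian bound:} by \eqref{sk}, $S_k(D^2\underline{u}) = c^k S_k(D^2\phi) \geq c^k/r^k$, which dominates $f_\epsilon = O(\epsilon/r^{n+1})$ on the bounded region $U_{\delta_0}$ once $c$ is chosen as a fixed positive constant depending only on $\partial\Omega$, $n$, $k$ (this uses crucially that $k < n+1$, so that $r^{n+1-k}$ is bounded below on $U_{\delta_0}$). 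Admissibility of $\underline{u}$ follows from that of $g^N$. \emph{(iii) Ordering on the outer boundary:} using the $C^0$ bound $u_R \geq \phi(x,C_1)$ from Lemma \ref{C0-lem}, it suffices to check $\underline{u} \leq \phi(x,C_1)$ on $\partial U_{\delta_0}$; since $g - 1 = O(\delta_0)$ there, both sides are close to $-1$ but a suitable choice of $\delta_0$ (depending on $C_1$ and $\rho$) and smallness of $c$ makes the inequality hold. The assumption $R > (2C_1)^{1/\alpha_0}$ ensures $U_{\delta_0}$ is safely contained in $B_R$ and the regime is consistent.

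The main obstacle is exactly the one isolated in Section \ref{section-cs}: since $\partial\Omega$ is only $(k-1)$-convex, the usual signed distance function to $\partial\Omega$ is $C^{1,1}$ only in a very small collar whose width depends on the curvatures of $\partial\Omega$, and does not produce a $k$-admissible subsolution on a neighborhood large enough to achieve the boundary inequality against $\phi(x,C_1)$. The construction $g = r\rho^{-1}$ supplies a globally defined replacement that both vanishes to first order on $\partial\Omega$ and satisfies the Hessian lower bound \eqref{sk} everywhere in $\R^n\setminus\bar\Omega$; this is precisely what makes the lower barrier $\underline{u}$ above available and hence what allows the proof to go through.
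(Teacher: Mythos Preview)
Your barrier $\underline{u}=-1+c\,(g^N-1)$ is exactly the right one (it is the paper's $\underline{v}$ with $c=1/A$), and steps (i) and (ii) are fine. The gap is in step (iii). On $U_{\delta_0}\setminus\partial\Omega$ you have $g>1$, hence $\underline{u}>-1$ there. But $C_1$ was chosen in Subsection~\ref{gb} so that $\phi(x,C_1)<-1$ on $\partial\Omega$, and by continuity $\phi(x,C_1)<-1$ on $\partial U_{\delta_0}$ for all small $\delta_0$. Thus the inequality you want, $\underline{u}\le\phi(x,C_1)$ on $\partial U_{\delta_0}$, reads $(\text{something}>-1)\le(\text{something}<-1)$ and fails no matter how small $c$ is. Your phrase ``both sides are close to $-1$'' hides the fact that they lie on opposite sides of $-1$. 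The only $C^0$ information available for $u_R$ is $u_R\ge\phi(x,C_1)$, so on a thin collar there is simply no usable lower bound for $u_R$ above $-1$.

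The fix, and this is what the paper does, is to run the comparison on a \emph{large} domain rather than a thin collar. Set $r_1=(2C_1)^{1/\alpha_0}$ and work on $B_{r_1}\setminus\bar\Omega$; the hypothesis $R>(2C_1)^{1/\alpha_0}$ is there precisely to guarantee $B_{r_1}\subset B_R$. On $\partial B_{r_1}$ one has $u_R>\phi(x,C_1)\ge -C_1 r_1^{-\alpha_0}=-\tfrac12$, so it suffices to take $c$ small enough (equivalently $A=1/c$ large) that $\underline{u}<-\tfrac12$ on $\partial B_{r_1}$; since $g^N\le (r_1/\rho_0)^N$ there with $\rho_0=\min_{\mathbb{S}^{n-1}}\rho$, this is immediate. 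Your Hessian bound $S_k(D^2\underline{u})\ge c^k r^{-k}>f_\epsilon$ then holds throughout $B_{r_1}\setminus\bar\Omega$ for $\epsilon<\epsilon_0(\partial\Omega)$, and the rest of your argument goes through verbatim. This is exactly the scenario flagged before Section~\ref{section-cs}: the comparison domain has size dictated by $C_1$, not an arbitrarily small width, and the whole point of $g=r\rho^{-1}$ is that, unlike the signed distance, it is defined and yields a $k$-admissible barrier on all of $\R^n\setminus\Omega$.
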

\begin{proof}
Let $r_1=(2C_1)^{\frac{1}{\a0}}<R,$ then on $\p B_{r_1}$ we get
\[\ur>\phi(x, C_1)\geq-\frac{1}{2},\]
where $B_{r_1}$ is the ball of radius $r_1$ centered at the origin. We will denote $\rho_0:=\min\limits_{\theta\in\mathbb{S}^{n-1}}\rho(\theta).$
Now, let $\underline{v}=\frac{1}{A}\lt(\phi(g)-1\rt)-1,$ where $\phi(g)$ is the subsolution constructed in Subsection \ref{cons-sub}, and $A>0$ is a constant chosen to be sufficiently large such that
$\frac{1}{A}\lt[\phi(r_1/\rho_0)-1\rt]<\frac{1}{2}.$

By virtue of \eqref{sk} we know that,
$S_k(D^2\uv)\geq\frac{1}{A^kr^k}$ in $\R^n\setminus\bar{\Omega}.$ It is easy to see that when $0<\e<\e_0$ and $\e_0$ is small, we have
$S_k(D^2\uv)>f_\e$ in $B_{r_1}\setminus\bar{\Omega}.$ Moreover, $\uv$ satisfies $\uv=-1$ on $\p\Omega$ and
$\uv<\ur$ on $\p B_{r_1}.$ Therefore, the standard maximum principle yields $\uv<\ur$ in $B_{r_1}\setminus\bar{\Omega}.$
This implies for any $\xi\in\p\Omega$ we have
\[\frac{\p\ur}{\p\nu}>\frac{\p\uv}{\p\nu}.\]
Therefore, we complete the proof of Lemma \ref{C1-inside-lower-lem}.
\end{proof}

\begin{remark}\label{rmk3.1} By virtue of \eqref{hess1} we get $|D\uv|>c>0$ in $B_{r_1}\setminus\Omega.$ We also notice that $\uv>-1$ in $B_{r_1}\setminus\bar{\Omega}$ and $\uv=-1$ on $\p\Omega.$ Therefore, we obtain
$\dfrac{\p\ur}{\p\nu}>\dfrac{\p\uv}{\p\nu}>c$ on $\p\Omega.$ Here $c=c(\p\Omega)>0$ is a positive constant independent of $R$ and $\e$.

\end{remark}

\begin{lemma}(\textbf{$C^1$ upper and lower bounds on $\p B_R$})
\label{C1-outside-lem}
Let $\ur$ be the solution of \eqref{ed1*} for $R>R_0$, then on the boundary $\p B_R$, $\ur$ satisfies
\[\a0C_0(R+\e)^{-\a0-1}<|D\ur|<\a0C_1(R+\e)^{-\a0-1}.\]
Here, $R_0>0$ is a constant depending on $\phi(x, C_1)$ and $C_0, C_1$ are constants defined in Subsection \ref{gb}.
\end{lemma}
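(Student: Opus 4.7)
The plan is the following. Because $\phi(x,C_0)=-C_0(|x|+\e)^{-\a0}$ depends only on $|x|$, it is constant on $\p B_R$; since $\ur=\phi(x,C_0)$ there by the boundary condition, the tangential gradient of $\ur$ vanishes on $\p B_R$, and consequently $|D\ur|=\p_\nu \ur$ where $\nu=x/|x|$ is the outward unit normal of $B_R$. The sign is positive because $\ur$ must rise from the value $-1$ on $\p\Omega$ toward $\phi(x,C_0)>-1$ on $\p B_R$. Thus the whole lemma reduces to two one-sided bounds on the scalar $\p_\nu \ur$ on $\p B_R$.

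For the lower bound I would use $\phi(\cdot,C_0)$ itself as an upper barrier. Since $0<C_0<1$, the computation in Subsection \ref{gb} gives $S_k(D^2\phi(x,C_0))=C_0^k f_\e\le f_\e$, so $\phi(\cdot,C_0)$ is a supersolution of \eqref{ed1*}; combined with $\phi(x,C_0)>-1=\ur$ on $\p\Omega$ and $\phi(x,C_0)=\ur$ on $\p B_R$, the comparison principle yields $\ur\le\phi(x,C_0)$ in $B_R\setminus\bar\Omega$. Since equality holds on $\p B_R$, the Hopf boundary-point lemma then gives
\[\p_\nu\ur\ \ge\ \p_\nu\phi(x,C_0)=\a0 C_0(R+\e)^{-\a0-1}\quad\text{on }\p B_R.\]

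For the upper bound the trick is to translate $\phi(\cdot,C_1)$ vertically so that its boundary value on $\p B_R$ agrees with $\phi(x,C_0)$; concretely, I would set
\[w(x):=-C_1(|x|+\e)^{-\a0}+(C_1-C_0)(R+\e)^{-\a0}.\]
Adding a constant leaves the Hessian unchanged, so $S_k(D^2 w)=C_1^k f_\e\ge f_\e$ and $w$ is a subsolution. By construction $w=\phi(x,C_0)=\ur$ on $\p B_R$, and on $\p\Omega$ one has
\[w=\phi(x,C_1)+(C_1-C_0)(R+\e)^{-\a0}<-1=\ur\]
provided the correction $(C_1-C_0)(R+\e)^{-\a0}$ is smaller than the strictly positive gap $-1-\max_{\p\Omega}\phi(x,C_1)$ guaranteed by the choice of $C_1$. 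This size condition is exactly what determines the threshold $R_0$ in the statement. For $R>R_0$ the comparison principle then gives $w\le\ur$ throughout $B_R\setminus\bar\Omega$, and the Hopf lemma on $\p B_R$ produces
\[\p_\nu\ur\ \le\ \p_\nu w=\a0 C_1(R+\e)^{-\a0-1}\quad\text{on }\p B_R.\]

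The only obstacle beyond routine comparisons is arranging that the translated barrier $w$ remains strictly below $-1$ on $\p\Omega$; this is precisely why the smallness assumption $R>R_0$ is imposed, and the threshold is determined entirely by how much room $\phi(\cdot,C_1)$ leaves below $-1$ on $\p\Omega$. Once this is secured, both bounds follow from maximum principle and Hopf lemma applied to the explicit radial barriers already constructed in Subsection \ref{gb}, with strictness of the inequalities inherited from the fact that $\ur$ is not identically equal to either barrier in the interior.
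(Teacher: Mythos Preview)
Your proposal is correct and follows essentially the same approach as the paper: the paper's barrier $\underline{u}=\phi(x,C_1)+(C_1-C_0)(R+\e)^{-\a0}$ is exactly your $w$, and the upper barrier $\phi(\cdot,C_0)$ is the same in both arguments. The only cosmetic difference is that you invoke the Hopf lemma explicitly for the strict boundary gradient inequalities, whereas the paper simply reads them off from the interior comparison $\underline{u}<\ur<\phi(x,C_0)$ together with equality on $\p B_R$.
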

\begin{proof}
Let $\underline{u}=\phi(x, C_1)+C_1(R+\e)^{-\a0}-C_0(R+\e)^{-\a0}.$ It is clear that on $\p B_R$, $\underline{u}=u_R$. When $R>R_0>0$ is sufficiently large, we have $\underline{u}<-1$ on $\p\Omega.$ Since $\sigma_k(D^2\phi(x,C_1))>f_\e$,  we obtain $\underline{u}<\ur<\phi(x, C_0)$ in $B_R\setminus\bar{\Omega}.$ This implies on $\p B_R$ we have
\[\frac{\p\phi(x,C_0)}{\p\nu}<\frac{\p\ur}{\p\nu}<\frac{\p\underline{u}}{\p\nu},\] where $\nu$ is the outer norma to $\p B_R$ (pointing away from $B_R\setminus\bar\Omega$).
Therefore, we conclude on $\p B_R$
\[\a0 C_0(R+\e)^{-\a0-1}<|D\ur|<\a0 C_1(R+\e)^{-\a0-1}.\]
 \end{proof}

\begin{lemma}(\textbf{$C^1$ global on $B_R\setminus\bar{\Omega}$})
\label{C1-global-lem}
Let $\ur$ be the solution of \eqref{ed1*}, then there exists some constant $A>0$ independent of $R$ and $\e$ such that
\[\max\limits_{x\in\bar{B}_R\setminus\Omega}\{|D\ur|+A\ur\}=\max\limits_{x\in\p\bar{B}_R\cup\p\Omega}\{|D\ur|+A\ur\}.\]
\end{lemma}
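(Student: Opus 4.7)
The plan is a Bernstein-type maximum principle argument applied to the test function $\Phi=|Du_R|+Au_R$, with $A>0$ to be chosen depending only on $\p\Omega,n,k$. Arguing by contradiction, suppose $\Phi$ attains its maximum at some interior point $p_0\in B_R\setminus\bar\Omega$. If $Du_R(p_0)=0$ then $\Phi(p_0)=Au_R(p_0)<0$ by Lemma \ref{C0-lem}, while $\Phi>0$ somewhere on $\p B_R$ by Lemma \ref{C1-outside-lem}, a contradiction; so I may assume $|Du_R|(p_0)>0$ and that $\Phi$ is smooth near $p_0$. The first-order condition $\n\Phi(p_0)=0$ reads $u_lu_{li}=-A|Du_R|u_i$, which in the eigenbasis of $D^2u_R(p_0)$ with eigenvalues $\l_1,\ldots,\l_n$ becomes the key dichotomy: for every $i$, either $u_i=0$ or $\l_i=-A|Du_R|$.

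Expanding $S_k^{ij}\Phi_{ij}(p_0)\le 0$, using $S_k^{ij}u_{lij}=(f_\e)_l$ from differentiating the equation, the Euler identity $S_k^{ij}u_{ij}=kf_\e$, and substituting the first-order condition into the cubic gradient term $-S_k^{ij}u_lu_{li}u_ku_{kj}/|Du_R|^3$, yields
\[
0\ge\frac{S_k^{ij}u_{li}u_{lj}-A^2S_k^{ij}u_iu_j}{|Du_R|}+\frac{\<Du_R,\n f_\e\>}{|Du_R|}+Akf_\e.
\]
The crux is that the first bracket is non-negative. In the eigenbasis, $S_k^{ij}$ is diagonal with $S_k^{ii}\ge 0$ by $k$-admissibility, and the numerator equals $\sum_i S_k^{ii}(\l_i^2-A^2u_i^2)$; by the dichotomy each summand is $\ge 0$ (if $u_i=0$ it is $S_k^{ii}\l_i^2\ge 0$; if $\l_i=-A|Du_R|$ then $\l_i^2=A^2|Du_R|^2\ge A^2u_i^2$). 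The inequality therefore collapses to $Akf_\e\le|\n f_\e|$.

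From the explicit formula of $f_\e$ given in Subsection \ref{gb}, a direct calculation yields $|\n f_\e|/f_\e=k/|x|+(n+1-k)/(|x|+\e)\le(n+1)/\rho_0$ uniformly in $\e$ and $R$, with $\rho_0:=\min_{\mathbb S^{n-1}}\rho$; any choice $A>(n+1)/(k\rho_0)$ therefore produces the desired contradiction. The main obstacle is the apparently dangerous $-A^2$ term from the cubic gradient interaction, which threatens to dominate the favorable $Akf_\e$ term; what makes the argument work is that the first-order condition at an interior maximum, combined with $k$-admissibility (giving $S_k^{ii}\ge 0$), matches this term exactly with the non-negative $S_k^{ij}u_{li}u_{lj}$ term, so that $A$ can be chosen from $\p\Omega,n,k$ alone with no dependence on any $C^2$-bound on $u_R$.
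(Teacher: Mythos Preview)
Your approach is the same Bernstein-type maximum principle as the paper's, and the core computation is correct. There is, however, one small slip in the degenerate case: your claim that $\Phi>0$ somewhere on $\partial B_R$ is false for large $R$, since there $u_R=-C_0(R+\e)^{-\alpha_0}$ while $|Du_R|\le \alpha_0 C_1(R+\e)^{-\alpha_0-1}$, so $\Phi=|Du_R|-AC_0(R+\e)^{-\alpha_0}<0$ once $R$ is large. The fix is immediate: if $Du_R(p_0)=0$ at an interior maximum of $\Phi$, then $\Phi(p_0)=Au_R(p_0)$; but $\lambda[D^2u_R]\in\Gamma_k$ forces $\Delta u_R>0$, so $u_R$ has no interior maximum, hence $u_R(p_0)<\max_{\partial(B_R\setminus\bar\Omega)}u_R$ and $\Phi(p_0)<A\max_{\partial}u_R\le\max_{\partial}\Phi$, contradicting that $p_0$ is a global maximum.

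For comparison, the paper sidesteps both this degenerate case and your cubic gradient term by freezing the direction: it considers $W=\max_{x,\xi\in\mathbb S^{n-1}}\{D_\xi u_R+Au_R\}$, which equals your $\Phi$ since $\max_\xi D_\xi u_R=|Du_R|$, but at the maximum one may fix $\xi=\xi_0$ and apply the second-order test to the \emph{linear} function $(u_R)_1+Au_R$. This gives directly
\[
0\ \ge\ S_k^{ij}\big[(u_R)_{1ij}+A(u_R)_{ij}\big]\ =\ (f_\e)_1+Akf_\e,
\]
with no $S_k^{ij}u_{li}u_{lj}$ or cubic term to balance. Your dichotomy argument (either $u_i=0$ or $\lambda_i=-A|Du_R|$, so $\sum_i S_k^{ii}(\lambda_i^2-A^2u_i^2)\ge 0$) is a correct and rather elegant way to produce the same cancellation when working with $|Du_R|$ directly, but the paper's linearization trick is shorter and avoids the case split.
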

\begin{proof}
Consider $W:=\max\limits_{x\in\bar{B}_R\setminus\Omega, \xi\in\mathbb{S}^{n}}\{D_\xi\ur+A\ur\}.$ Suppose $W$ is achieved at an interior point
$(x_0, \xi_0)\in\lto B_R\setminus\bar{\Omega}\rto\times\mathbb{S}^n.$ Rotating the coordinate we may assume at $x_0,$
$$D_{\xi_0}\ur=(\ur)_1,\,\,\mbox{and $(u_R)_{\alpha\beta}=\lambda_\alpha\delta_{\alpha\beta}$ for $\alpha,\,\, \beta\geq 2.$}$$ Then at $x_0,$ a straightforward calculation gives
$A(u_R)_i+u_{1i}=0,$ which implies
$$(u_R)_{1i}=0\,\, \mbox{for $i\geq 2,$ and $(u_R)_{11}=-A(u_R)_1.$}$$
Moreover, at $x_0$, we also have
\begin{equation}
\label{ed2}
\begin{aligned}
0&\geq AS_k^{ii}(u_R)_{ii}+S^{ii}_k(u_R)_{1ii}\\
&=Akf_\e+f_\e\frac{x_1}{|x_0|}\lt[-\frac{(k-1)\e |x_0|^{-2}}{1+\e |x_0|^{-1}}-|x_0|^{-1}-\frac{n}{|x_0|+\e}\rt],
\end{aligned}
\end{equation}
where we have used $S_k^{ii}(\ur)_{ii}=kf_\e$ and $S^{ii}_k(\ur)_{1ii}=(f_\e)_1.$
It's clear that when $A=A(n, k, \p\Omega)>0$ large, the right hand side of \eqref{ed2} is positive. This leads to a contradiction. Thus $W$ is achieved on the boundaries.
\end{proof}
Combining Lemma \ref{C0-lem}, \ref{C1-inside-upper-lem}, \ref{C1-inside-lower-lem},  \ref{C1-outside-lem} with Lemma \ref{C1-global-lem}, we conclude
\begin{proposition}\label{C1-bound}
Let $\ur$ be the solution of \eqref{ed1*}, then there exists some constant $C$ independent of $R$ and $\e$ such that
\[|D\ur|\leq C\,\,\mbox{in $\bar{B}_R\setminus\Omega.$}\]
\end{proposition}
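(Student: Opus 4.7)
The plan is straightforward assembly: Proposition \ref{C1-bound} is the endgame of the lemmas preceding it, and the proof should consist of plugging the boundary $C^1$ estimates into the interior maximum principle from Lemma \ref{C1-global-lem}. I will not need any new computation, only a careful bookkeeping of which constants depend on what.

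First I would invoke Lemma \ref{C1-global-lem} to reduce the interior problem to a boundary one: the maximum of $|Du_R|+Au_R$ over $\bar B_R\setminus\Omega$ is achieved on $\p B_R\cup\p\Omega$, for some $A$ independent of $R$ and $\e$. The key observation making the boundary bound easy is that $u_R$ is constant on each of $\p\Omega$ and $\p B_R$: it equals $-1$ on $\p\Omega$ by construction, and on $\p B_R$ it equals $\phi(x,C_0)=-C_0(R+\e)^{-\a0}$, which depends only on $|x|=R$. Hence on both boundary components the tangential gradient vanishes and $|Du_R|=|\p u_R/\p\nu|$.

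Next I would bound the normal derivative on each piece. On $\p\Omega$, Lemmas \ref{C1-inside-upper-lem} and \ref{C1-inside-lower-lem} give $|\p u_R/\p\nu|\le C(\p\Omega,n,k)$, with $C$ independent of $R$ and $\e$. On $\p B_R$, Lemma \ref{C1-outside-lem} gives $|\p u_R/\p\nu|\le \a0 C_1(R+\e)^{-\a0-1}\le \a0 C_1 R_0^{-\a0-1}$, again independent of $R$ and $\e$. Finally, Lemma \ref{C0-lem} controls $|u_R|$ by $|\phi(x,C_1)|\le C_1\rho_0^{-\a0}$ on $\bar B_R\setminus\Omega$ uniformly, which absorbs the $Au_R$ term. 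Combining these yields
\[
|Du_R|\le \max_{\p B_R\cup\p\Omega}\bigl(|Du_R|+Au_R\bigr)-Au_R\le C+A\sup|u_R|\le C',
\]
with $C'$ independent of $R$ and $\e$, completing the argument.

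There is no real obstacle here; all the heavy lifting was done in the preceding lemmas, particularly the construction of the global barrier $\uv$ in Lemma \ref{C1-inside-lower-lem} (which is what made the $(k-1)$-convex starshaped assumption enter) and the interior cutoff test-function argument in Lemma \ref{C1-global-lem}. The only thing worth double-checking in writing the proof is that the constant $A$ from Lemma \ref{C1-global-lem} can be chosen independently of $R,\e$, which is clear from \eqref{ed2} since the right-hand side of that inequality only depends on $n,k,\p\Omega$ once $\e<\e_0$ is fixed.
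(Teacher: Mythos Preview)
Your proposal is correct and matches the paper's approach exactly: the paper simply states that Proposition~\ref{C1-bound} follows by combining Lemma~\ref{C0-lem}, Lemmas~\ref{C1-inside-upper-lem}--\ref{C1-outside-lem}, and Lemma~\ref{C1-global-lem}, without writing out any details. Your write-up supplies precisely those details (tangential gradient vanishes on each boundary component, normal derivative controlled by the barrier lemmas, $C^0$ bound absorbs the $Au_R$ shift), and your remark about the $R,\e$-independence of $A$ is on point.
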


\subsection{$C^2$ boundary estimates}
\label{c2-boundary}
Let $x_0\in\p\Omega$ be an arbitrary point on $\p\Omega.$ Without loss of generality, we may choose a local coordinate $\{\tx_1, \cdots, \tx_n\}$ such that $x_0$ is the origin. Let $\tx_n$ axis be the exterior normal of $\p\Omega$ and the boundary near the origin is represented by
\[\tx_n=\rho(\tx')=-\frac{1}{2}\sum_{\alpha=1}^n\k_\al\tx^2_\al+O(|\tx'|^3),\]
where $\k_1,\k_2,\cdots,\k_{n-1}$ are the principal curvatures of $\p\Omega$ at the origin and $\tx'=(\tx_1,\tx_2,\cdots,\tx_{n-1})$. Let $\ur$ be the solution of \eqref{ed1*}, then \be\label{ed2*}
u_R(\tx', \rho(\tx'))=-1\,\,\mbox {on $\p\Omega.$}
\ee
First, differentiating \eqref{ed2*} with respect to tangential directions at $x_0$ we obtain,
\[(u_R)_{\al}+(u_R)_{n}\rho_{\al}=0\,\,\mbox{and $(u_R)_{\al\beta}+(u_R)_{n}\rho_{\alpha\beta}=0$}.\]
This gives,
\be\label{ed3*}
(u_R)_{\alpha\beta}=(u_R)_{n}\k_{\alpha}\delta_{\al\beta}\,\,\mbox{for $\al,\beta<n.$}
\ee
Next, we estimate $|(u_R)_{\al n}(0)|.$
\begin{lemma}(\textbf{$C^2$ bound on $\p\Omega$ in mixed directions})
\label{C2-inside-mix-lem}
Let $u_R$ be the solution of \eqref{ed1*}, then on $\p\Omega,$ $u_R$ satisfies
\[|(u_R)_{\tau\nu}|<C,\]
where $\tau$ is an arbitrary unit tangential vector of $\partial\Omega$, $\nu$ is the outward unit normal of $\p\Omega$ (pointing into $B_R\setminus\bar\Omega$), and $C=C(\p\Omega, n, k)$ is a constant only depending on $\p\Omega,$ $n,$ and $k.$
\end{lemma}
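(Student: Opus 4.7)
The plan is to use the standard barrier method for mixed tangential--normal boundary estimates in the Dirichlet problem for $k$-Hessian equations, with the subsolution built in Section \ref{section-cs} playing the role usually played by the signed distance function.

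First I would introduce, for each fixed $\alpha<n$, the tangential operator
$$T_\alpha = \p_{\tx_\alpha} + \rho_\alpha(\tx')\,\p_{\tx_n},$$
which is tangent to $\p\Omega$ in a small neighborhood $\mathcal N$ of the origin. Differentiating the boundary identity $u_R(\tx',\rho(\tx'))=-1$ along $\tx_\alpha$ shows $T_\alpha u_R\equiv 0$ on $\p\Omega\cap\mathcal N$. Since $\rho_\alpha(0)=0$, at the origin $\p_{\tx_n}(T_\alpha u_R)(0)=(u_R)_{\alpha n}(0)$, so it suffices to bound $|(T_\alpha u_R)_n(0)|$. Applying $T_\alpha$ to $S_k(D^2u_R)=f_\ep$ and using that $u_R$ is $k$-admissible (so that the diagonal entries $S_k^{ii}$ dominate the individual second derivatives that appear in the commutator coefficients $\rho_{\alpha\beta}$), together with Proposition \ref{C1-bound}, yields
$$\bigl|\,S_k^{ij}(D^2 u_R)(T_\alpha u_R)_{ij}\,\bigr|\;\le\;C\bigl(1+\mathcal T\bigr),\qquad \mathcal T:=\sum_i S_k^{ii}(D^2 u_R).$$

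Second, I would build a barrier of the form
$$\Psi \;=\; M_1(u_R-\uv)+M_2|\tx|^2-M_3\tx_n,$$
where $\uv=\tfrac1A(\phi(g)-1)-1$ is the global subsolution produced in Subsection \ref{cons-sub} and used already in the proof of Lemma \ref{C1-inside-lower-lem}, and the constants will be chosen in the hierarchy $M_1\gg M_2\gg M_3\gg 1$. The concavity of $\sigma_k^{1/k}$ on $\Gamma_k$ applied to the pair $(D^2u_R,D^2\uv)$, together with the lower bound \eqref{sk}, gives a strictly negative contribution of order $-c\,\mathcal T$ from the first term; the quadratic $M_2|\tx|^2$ adds $+2M_2\mathcal T$, and the linear $-M_3\tx_n$ is harmonic. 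Choosing $M_1$ large enough to absorb the commutator estimate of the previous paragraph, I obtain
$$S_k^{ij}(D^2 u_R)\,\Psi_{ij}\;+\;S_k^{ij}(D^2 u_R)(T_\alpha u_R)_{ij}\;\le\;0 \quad\text{in}\ \mathcal N\cap(B_R\setminus\bar\Omega).$$
For the boundary comparison of $\Psi\pm T_\alpha u_R$ on $\p\bigl(\mathcal N\cap(B_R\setminus\bar\Omega)\bigr)$: on the portion in $\p\Omega$, both $T_\alpha u_R$ and $M_1(u_R-\uv)$ vanish and $M_2|\tx|^2-M_3\tx_n\ge 0$ once $M_2\gg M_3$; on the interior portion of $\p\mathcal N$, Proposition \ref{C1-bound} bounds $T_\alpha u_R$ while $M_2|\tx|^2$ dominates once $M_2$ is large relative to $M_3$ and the $C^1$ constant.

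Third, the maximum principle forces $\Psi\ge \pm T_\alpha u_R$ throughout $\mathcal N\cap(B_R\setminus\bar\Omega)$, with equality at the origin. Differentiating in the inward normal direction at $x_0=0$ yields $|(T_\alpha u_R)_n(0)|\le \Psi_n(0)\le M_3+C$, and unraveling $T_\alpha=\p_\alpha+\rho_\alpha\p_n$ together with $\rho_\alpha(0)=0$ and the $C^1$ bound on $(u_R)_n$ gives $|(u_R)_{\alpha n}(0)|\le C$, as required. The main obstacle is step two: for $k$-convex domains one would replace $\uv$ by (a multiple of) the signed distance to $\p\Omega$, but here $\p\Omega$ is only $(k-1)$-convex so that candidate is $k$-admissible only in a thin collar, too thin to accommodate the unknown size of $u_R-\uv$ on $\mathcal N$. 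The substitute $\phi(g)=(r/\rho)^N$ constructed in Section \ref{section-cs}, being a subsolution on the entire exterior region with the same boundary data as $u_R$ on $\p\Omega$, is precisely what lets the barrier work on a neighborhood of fixed size independent of $R$ and $\ep$.
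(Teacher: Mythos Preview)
Your barrier construction using $u_R-\underline v$ is close in spirit to the paper's, but the commutator estimate in your first step does not go through with the operator you chose. With $T_\alpha=\p_{\tx_\alpha}+\rho_\alpha(\tx')\,\p_{\tx_n}$ one has
\[
S_k^{ij}(T_\alpha u_R)_{ij}=(f_\e)_\alpha+\rho_\alpha(f_\e)_n+S_k^{ij}(\rho_\alpha)_{ij}\,(u_R)_n+2\,S_k^{ij}(\rho_\alpha)_i\,(u_R)_{nj},
\]
and the last term is \emph{not} bounded by $C(1+\mathcal T)$. In a frame diagonalizing $D^2u_R$ it reads $\sum_m S_{k-1}(\lambda|m)\lambda_m\,(\cdot)$, and for $\lambda\in\bar\Gamma_k$ the quantities $S_{k-1}(\lambda|m)|\lambda_m|$ are not controlled by $\mathcal T=\sum_m S_{k-1}(\lambda|m)$ (for instance, with $n=3$, $k=2$, $\lambda=(t,t,-t/2)$ one has $\mathcal T\sim t$ but $S_{k-1}(\lambda|1)\lambda_1\sim t^2$). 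Consequently no choice of $M_1$ in your $\Psi$ can absorb this term: $M_1(u_R-\underline v)$ only yields $-cM_1\mathcal T$, which is of lower order. The justification you offer (``$S_k^{ii}$ dominate the individual second derivatives'') is simply false for $k$-admissible functions.

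The paper sidesteps this entirely by taking the tangential operator to be the infinitesimal rotation
$T=\p_\alpha-\kappa_\alpha(\tx_\alpha\p_n-\tx_n\p_\alpha)$, whose coefficients are affine in $\tx$; then $\mathcal L$ and $T$ commute exactly, and $|\mathcal L Tu_R|\le c_0 f_\e$ with no second derivatives of $u_R$ appearing. On $\p\Omega$ this $T$ satisfies only $Tu_R=O(|\tx'|^2)$ rather than $Tu_R\equiv 0$, but that is harmless: the barrier $h=u_R-\underline v+\theta|\tx|^2$ (with $\theta$ small enough that $\underline v-\theta|\tx|^2$ is still strictly $k$-convex and $S_k(D^2(\underline v-\theta|\tx|^2))>2f_\e$) satisfies $\mathcal L h<-c_1 f_\e$ by concavity of $S_k^{1/k}$, and one chooses $B$ so that $Bh$ dominates $|Tu_R|$ on $\p\tilde B_{\delta_0}$. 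Everything is then balanced at the scale $f_\e$, not $\mathcal T$. If you want to keep your operator $T_\alpha$, you would have to add a term like $-N\sum_{l<n}|(u_R)_l-(\underline v)_l|^2$ to the barrier (as in Guan's method) to generate the negative $S_k^{ij}(u_R)_{li}(u_R)_{lj}$ needed to absorb the bad commutator; your current $\Psi$ lacks such a term.
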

\begin{proof}
In this proof, let $\underline{v}$ be the subsolution of \eqref{ed1*} constructed in the proof of Lemma \ref{C1-inside-lower-lem}. Let $x_0\in\p\Omega$ be an arbitrary point on $\p\Omega.$ Without loss of generality, we may choose a local coordinate $\{\tx_1, \cdots, \tx_n\}$ such that $x_0$ is the origin.
Let $T=\p_{\al}-\k_{\al}(\tx_{\al}\p_{n}-\tx_n\p_{\al}),$ then on $\p\Omega$ near $x_0$ we have
\[Tu_R=\lto\p_{\al}+\rho_{\al}\p_{n}\rto u_R+O(|\tx'|^2)=O(|\tx'|^2).\]
Denote $\mathcal{L}:= S_k^{ij}\p_{ij}$ and $\tilde{B}_{\delta_0}=B_{\delta_0}(x_0)\setminus\bar{\Omega},$ where $\delta_0>0$ is a fixed constant chosen to be so small that $\tilde{B}_{\delta_0}\subset B_{r_1}\setminus\bar{\Omega}.$ Note that $r_1=(2C_1)^{\frac{1}{\a0}}$ is a fixed constant defined in Lemma \ref{C1-inside-lower-lem} and $B_{r_1}$ is the ball centered at $0$ (under the original coordinate, not $x_0$) with radius $r_1$. A straightforward calculation yields
$$|\mathcal{L}Tu_R|<c_0f_\e\,\,\mbox{in $\tilde B_{\delta_0}$}$$
for some $c_0>0$ independent of $R$ and $\e.$ Notice that $\uv$ is strictly $k$-convex, i.e., $\l[D^2\underline{v}]\in\Gamma_k.$ Moreover, without loss of generality we may also assume $S_k(D^2\uv)>4f_\e$ in $B_{r_1}\setminus\bar{\Omega}.$  It is easy to see that there exists some constant
$\theta=\theta(\p\Omega)>0$ satisfying
\[\l[D^2(\underline{v}-\theta|\tx|^2)]\in\Gamma_k\] and
\[S_k[D^2(\underline{v}-\theta|\tx|^2)]>2f_{\e}\,\,\mbox{in $B_{r_1}\setminus\bar{\Omega}.$}\]

Consider the barrier $h=u_R-\underline{v}+\theta|\tx|^2,$ then it is easy to see that
$$h=\theta|\tx|^2\,\,\mbox{on $\p\tilde{B}_{\delta_0}\cap\p\Omega,$ and $h\geq \theta \delta_0^2$ on $\p\tilde{B}_{\delta_0}\setminus\p\Omega.$}$$
Moreover, by the concavity of $S_k^{1/k}$, we have $$\mathcal{L}h=\mathcal{L}u_R-\mathcal{L}(\underline{v}-\theta|\tx|^2)<(1-2^{1/k})kf_\e:=-c_1f_{\e},$$
where $c_1=k(2^{1/k}-1)$.
Now let $\omega=Tu_R+Bh,$ where the constant $B$ is chosen such that
$$c_1B>c_0,\,\,\mbox{ $B\theta|\tx|^2>|Tu_R|$ on $\p\Omega\cap\tilde{B}_{\delta_0},$
and $B\theta \delta_0^2>|T\ur|$ on $\p\tilde{B}_{\delta_0}\setminus\p\Omega.$}$$ Then we obtain
\[\mathcal{L}\omega<0\,\,\mbox{in $\tilde{B}_{\delta_0},$ and $\omega\geq 0$ on $\p\tilde{B}_{\delta_0}.$}\]
Therefore, we conclude $Tu_R>-Bh$ in $\tilde{B}_{\delta_0}.$ This implies $(Tu_R)_{n}>-Bh_{n}$ at $x_0.$
Similarly, by considering $Tu_R-Bh$ we get $(Tu_R)_{n}<Bh_{n}$ at $x_0.$
Hence, our lemma is proved.
\end{proof}

Finally, we estimate $|(u_R)_{nn}(0)|.$ Let $\k=(\k_1,\cdots,\k_{n-1})$ be the principal curvatures of $\p\Omega.$ In view of Remark \ref{rmk3.1}, we know on $\p\Omega,$ $D_{\nu}u_R>0$. Then by $S_k(D^2u_R)=f_{\e}$ and equation \eqref{ed3*}, we have
\[|D_\nu u_R|^kS_k(\k)+|D_\nu u_R|^{k-1}S_{k-1}(\k)(u_R)_{\nu\nu}-\sum_{\al=1}^{n-1}|D_\nu u_R|^{k-2}S_{k-2}(\k|\al)(u_R)^2_{\al \nu}=f_\e.\]
Combining the above equality with Lemma \ref{C1-inside-upper-lem} and \ref{C2-inside-mix-lem}, we conclude
\begin{lemma}(\textbf{$C^2$ bound on $\p\Omega$ in double normal directions})
\label{C2-inside-normal-lem}
Let $u_R$ be the solution of \eqref{ed1*}, then on $\p\Omega,$ $u$ satisfies
\[|(u_R)_{\nu\nu}|<C,\]
where $\nu$ is the outward unit normal of $\p\Omega$ (pointing into $B_R\setminus\bar\Omega$), and the constant $C=C(\p\Omega, n, k)$ only depends on $\p\Omega,$ $n,$ and $k.$
\end{lemma}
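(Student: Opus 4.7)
The statement is essentially ``the formula displayed right above Lemma \ref{C2-inside-normal-lem} plus the earlier lemmas,'' so my plan is to solve algebraically for $(u_R)_{\nu\nu}$ in that identity and check that every term on the right is controlled.

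First, I would fix an arbitrary $x_0 \in \partial \Omega$ and choose principal coordinates as in (\ref{ed3*}), so that $\p_n = \nu$ and the tangential Hessian is diagonal with entries $(u_R)_n \kappa_\al$. Expanding $S_k(D^2 u_R)$ along the last row and column and using $(u_R)_{\al\beta}=(u_R)_n\kappa_\al \delta_{\al\beta}$ yields precisely the identity recorded immediately above the lemma, namely
\[
|D_\nu u_R|^{k-1} S_{k-1}(\kappa)\,(u_R)_{\nu\nu}
\;=\; f_\eps - |D_\nu u_R|^{k} S_k(\kappa) + \sum_{\al=1}^{n-1} |D_\nu u_R|^{k-2} S_{k-2}(\kappa|\al)\,(u_R)_{\al\nu}^2.
\]
Hence the task reduces to proving that the left coefficient of $(u_R)_{\nu\nu}$ is bounded below by a positive constant and that the right-hand side is bounded in absolute value.

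For the coefficient of $(u_R)_{\nu\nu}$, I would use that $\partial\Omega$ is smooth and $(k-1)$-convex, which forces $S_{k-1}(\kappa) \geq c_0 > 0$ uniformly on $\partial\Omega$ by compactness, and Remark~\ref{rmk3.1} (plus Lemma~\ref{C1-inside-lower-lem}) which gives $D_\nu u_R \geq c_1 > 0$ on $\partial\Omega$ uniformly in $R$ and $\eps$. Thus $|D_\nu u_R|^{k-1} S_{k-1}(\kappa) \geq c_0 c_1^{k-1} > 0$. For the right-hand side, $|f_\eps|$ is bounded on $\partial\Omega$ (since $\eps < \eps_0$ and $\partial\Omega$ stays away from the origin), $|S_k(\kappa)|$ and $|S_{k-2}(\kappa|\al)|$ are bounded because $\kappa$ is bounded, $|D_\nu u_R|$ is bounded above by Lemma~\ref{C1-inside-upper-lem}, and $|(u_R)_{\al\nu}|$ is bounded by Lemma~\ref{C2-inside-mix-lem}. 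Putting these together and dividing, $|(u_R)_{\nu\nu}(x_0)| \leq C$ with $C = C(\partial\Omega, n, k)$ independent of $x_0$, $R$, and $\eps$.

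I do not expect a real obstacle here: all the heavy lifting, namely the construction of the subsolution in Section~\ref{section-cs} and the resulting uniform lower bound on $D_\nu u_R$, has already been carried out, and the mixed-direction estimate has just been obtained. The only thing one must be careful about is to verify strict positivity (not just nonnegativity) of $S_{k-1}(\kappa)$, which follows from the open-cone definition of $(k-1)$-convex together with the smoothness and compactness of $\partial\Omega$. Once that is in place, the normal-normal bound is just an algebraic consequence of the equation.
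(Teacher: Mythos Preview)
Your proposal is correct and follows exactly the same approach as the paper: expand $S_k(D^2u_R)=f_\eps$ at a boundary point using the tangential identity \eqref{ed3*}, then solve for $(u_R)_{\nu\nu}$ using the uniform positive lower bound on $D_\nu u_R$ from Remark~\ref{rmk3.1}, the strict positivity of $S_{k-1}(\kappa)$ from $(k-1)$-convexity, the $C^1$ upper bound from Lemma~\ref{C1-inside-upper-lem}, and the mixed-direction bound from Lemma~\ref{C2-inside-mix-lem}. The paper records this argument in the paragraph immediately preceding the lemma statement and then simply states the conclusion.
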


In the rest of this subsection, we will consider the $C^2$ boundary estimates on $\p B_R.$

Let $x_0\in\p B_R$ be an arbitrary point on $\p B_R.$ Without loss of generality, we may assume
$x_0=(0, \cdots, 0, R).$ Choose a local coordinate $\{\tx_1, \cdots, \tx_n\}$ around $x_0$ such that $x_0$ is the origin, and
the $\tx_n$ axis is the interior normal of $\p B_R$ (pointing into $B_R\setminus\bar\Omega$). Then the half sphere containing $x_0,$ which will be denoted by $\p B_{R+},$ can be written as
\[\tx_n=\rho(\tx')=R-\sqrt{R^2-|\tx'|^2},\,\,|\tx'|<R,\]
where $\tx'=(\tx_1,\cdots,\tx_{n-1})$. Note that on $\p B_R,$ we have $u_R(\tx', \rho(\tx'))=b_R$,
where $b_R=-C_0(R+\epsilon)^{-\al_0}$.
Therefore, on $\p B_{R+}$ for $\al, \beta<n,$ we have
\[(u_R)_{\al}+(u_R)_{n}\rho_{\al}=0,\]
and
\[(\ur)_{\al\beta}+(\ur)_{\al n}\rho_\beta+(\ur)_{\beta n}\rho_{\al}+(\ur)_n\rho_{\al\beta}=0.\]
In particular, at $x_0$ we get
\be\label{ed4*}
(u_R)_{\al\beta}=-(u_R)_{n}\rho_{\al\beta}=-\frac{(u_R)_{n}}{R}\delta_{\al\beta}.
\ee
Combining Lemma \ref{C1-outside-lem} with \eqref{ed4*} we conclude,
\be\label{ed5*}
\a0 C_0(R+\e)^{-\a0-1}R^{-1}\delta_{\al\beta}<|(u_R)_{\al\al}(x_0)|<\a0 C_1(R+\e)^{-\a0-1}R^{-1}\delta_{\al\beta}
\ee
for $\al, \beta<n.$
\begin{lemma}(\textbf{$C^2$ bound on $\p B_R$ in mixed directions})
\label{C2-outside-mix-lem}
Let $u_R$ be the solution of \eqref{ed1*}, then on $\p B_R,$ $u$ satisfies
\[|(u_R)_{\tau\nu}|<C R^{-\a0-1},\]
where $\tau$ is an arbitrary unit tangential vector of $\partial\Omega$, $\nu$ is the outward unit normal of $\p B_R$ (pointing away from $B_R\setminus\bar\Omega$), and the constant $C=C(\p\Omega, n, k)$ only depends on $\p\Omega,$ $n,$ and $k.$
\end{lemma}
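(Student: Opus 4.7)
The plan is to follow the barrier construction used in Lemma \ref{C2-inside-mix-lem}, but with the subsolution adapted to the outer boundary $\partial B_R$ and with scalings designed to produce the sharp rate $R^{-\alpha_0-1}$. Fix $x_0\in\partial B_R$ and work in the local coordinates $\{\tilde x_1,\ldots,\tilde x_n\}$ set up just before the statement, so that $x_0=(0,\ldots,R)$ and $\tilde x_n$ points inward. Because $u_R|_{\partial B_R}\equiv -C_0(R+\epsilon)^{-\alpha_0}$ is constant and $f_\epsilon$ is radial, the rotational vector field $T:=T_{\alpha n}=x_n\partial_\alpha-x_\alpha\partial_n$ satisfies $Tu_R\equiv 0$ on $\partial B_R$ and $Tf_\epsilon\equiv 0$. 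Since tangential derivatives of $u_R$ vanish at $x_0$, a direct calculation gives $(Tu_R)_n(x_0)=(u_R)_\alpha(x_0)+R\,(u_R)_{\alpha n}(x_0)=R\,(u_R)_{\alpha n}(x_0)$, so bounding $|(u_R)_{\alpha n}(x_0)|$ by $CR^{-\alpha_0-1}$ is equivalent to bounding $|(Tu_R)_n(x_0)|$ by $CR^{-\alpha_0}$.

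For the barrier, I would set $\uv(x):=\phi(x,C_1)+(C_1-C_0)(R+\epsilon)^{-\alpha_0}$, so that $\uv=u_R$ on $\partial B_R$ and $S_k(D^2\uv)=C_1^kf_\epsilon>f_\epsilon$, i.e. $\uv$ is a strict subsolution which by Lemma \ref{C1-outside-lem} and the smoothness of $\partial B_R$ lies strictly below $u_R$ inside $\tilde B_{\delta_0}(x_0):=B_{\delta_0}(x_0)\cap(B_R\setminus\bar\Omega)$ for $\delta_0>0$ small. Define $h:=u_R-\uv+\theta|\tilde x|^2$ with $\theta>0$ chosen so small that $\uv-\theta|\tilde x|^2$ remains $k$-admissible with $S_k[D^2(\uv-\theta|\tilde x|^2)]>2f_\epsilon$. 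Then $h\ge 0$ on $\partial\tilde B_{\delta_0}(x_0)$ (it equals $\theta|\tilde x|^2$ on the $\partial B_R$-portion and is bounded below by $\theta\delta_0^2$ on the interior portion) with $h(x_0)=0$, and exactly as in Lemma \ref{C2-inside-mix-lem} the concavity of $S_k^{1/k}$ yields $\mathcal Lh\le-c_1f_\epsilon$. At $x_0$ one computes $h_n(x_0)=(u_R)_n(x_0)-(\phi(\cdot,C_1))_n(x_0)$, which by Lemma \ref{C1-outside-lem} satisfies $0<h_n(x_0)\le\alpha_0(C_1-C_0)(R+\epsilon)^{-\alpha_0-1}$.

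To run the maximum principle on $\omega_\pm:=\pm Tu_R+Bh$, I first use $Tf_\epsilon\equiv 0$ together with $S_k^{ij}(u_R)(u_R)_{lij}=(f_\epsilon)_l$ to reduce $\mathcal L(Tu_R)$ to the commutator $\mathcal L(Tu_R)=2[S_k^{nj}(u_R)_{\alpha j}-S_k^{\alpha j}(u_R)_{nj}]$, which is controlled by a constant times $f_\epsilon$ via Newton--Maclaurin together with the $C^1$ bound of Proposition \ref{C1-bound}. I would then choose $B$ of order $R$, large enough that $Bc_1f_\epsilon\ge|\mathcal L(Tu_R)|$ and $Bh\ge|Tu_R|$ on $\partial\tilde B_{\delta_0}(x_0)$; the latter forces $B\sim R$ because on the interior portion $|Tu_R|\le R|Du_R|=O(R)$ while $h\ge\theta\delta_0^2$. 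With these choices $\mathcal L\omega_\pm\le 0$ in $\tilde B_{\delta_0}(x_0)$ and $\omega_\pm\ge 0$ on its boundary, so $\omega_\pm\ge 0$ in $\tilde B_{\delta_0}(x_0)$ by the maximum principle. Since $\omega_\pm(x_0)=0$, we conclude $(\omega_\pm)_n(x_0)\ge 0$, i.e., $|(Tu_R)_n(x_0)|\le Bh_n(x_0)=O(R)\cdot O(R^{-\alpha_0-1})=O(R^{-\alpha_0})$, yielding the desired estimate $|(u_R)_{\alpha n}(x_0)|\le CR^{-\alpha_0-1}$.

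The main obstacle is the scale-matching: the interior portion of $\partial\tilde B_{\delta_0}(x_0)$ only sees the rough global gradient bound $|Du_R|=O(1)$ rather than the sharp $O(R^{-\alpha_0-1})$ valid on $\partial B_R$, which is what forces $B$ to grow linearly in $R$, and one must then verify that the product $Bh_n(x_0)$ still lands at the optimal rate $R^{-\alpha_0}$ rather than something larger. Tracking the powers of $R$ through this balance between $\mathcal L(Tu_R)$, $\mathcal Lh$, $|Tu_R|$ on $\partial\tilde B_{\delta_0}(x_0)$, and $h_n(x_0)$ is the one calculation that does not appear verbatim in Lemma \ref{C2-inside-mix-lem}.
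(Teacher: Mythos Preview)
There is a genuine scaling gap in your barrier. Near $\partial B_R$ the eigenvalues of $D^2\underline v=D^2\phi(\cdot,C_1)$ are of size $O(R^{-\alpha_0-2})$, so in order for $\underline v-\theta|\tilde x|^2$ to remain $k$-admissible (let alone satisfy $S_k>2f_\epsilon$) you are forced to take $\theta=O(R^{-\alpha_0-2})$, not a fixed constant. With that $\theta$, on the interior portion of $\partial\tilde B_{\delta_0}(x_0)$ you only get $h\ge\theta\delta_0^2\sim R^{-\alpha_0-2}$, while $|Tu_R|\le |x|\,|Du_R|=O(R)$ with your unnormalized $T$. Matching $Bh\ge|Tu_R|$ now requires $B\sim R^{\alpha_0+3}$, and then $Bh_n(x_0)\sim R^{\alpha_0+3}\cdot R^{-\alpha_0-1}=R^2$, which gives $|(u_R)_{\alpha n}(x_0)|\le CR$, not the sharp $CR^{-\alpha_0-1}$. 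The difficulty you flag in your last paragraph is precisely the one that breaks the argument; the balance does \emph{not} land at the optimal rate. (A side remark: your commutator $2[S_k^{nj}u_{\alpha j}-S_k^{\alpha j}u_{nj}]$ is in fact identically zero, since $S_k^{ij}(D^2u)$ commutes with $D^2u$; so $\mathcal L(Tu_R)=0$. The Newton--Maclaurin/$C^1$ justification is not the right mechanism there, though the conclusion is harmless.)

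The paper avoids this by abandoning the fixed small ball and the $|\tilde x|^2$ perturbation altogether. It works in a region $Q_R$ of diameter comparable to $R$, uses the normalized operator $T=\partial_\alpha+\frac{1}{R}(\tilde x_\alpha\partial_n-\tilde x_n\partial_\alpha)$ (so $|Tu_R|\le 3|Du_R|$ in $Q_R$), and---this is the key new input---invokes Lemma~\ref{C1-global-lem} to conclude that $|Du_R|+Au_R$ attains its maximum on $\partial B_R$, hence $|Du_R|\le A(b_R-u_R)+\alpha_0C_1(R+\epsilon)^{-\alpha_0-1}$ throughout. Bounding $u_R$ below by the subsolution $\underline u$ turns this into a pointwise upper barrier for $|Tu_R|$. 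The matching $h$ is then $3A(b_R-\underline u)+C_2(R+\epsilon)^{-\alpha_0-1}\sin(\tilde x_n/R)$, which already has $h_n(x_0)=O(R^{-\alpha_0-1})$ with $C_2$ independent of $R$, and the maximum principle gives $|(u_R)_{\alpha n}(x_0)|\le h_n(x_0)\le CR^{-\alpha_0-1}$ directly. The essential idea you are missing is to feed the sharp interior gradient control from Lemma~\ref{C1-global-lem} into the barrier, rather than relying on a quadratic perturbation whose strength is dictated by the (tiny) curvature of $\underline v$.
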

\begin{proof}
We use the coordinate $\{\tx_1,\cdots,\tx_n\}$ chosen above.
Let $$Tu_R=(u_R)_{\al}+\frac{1}{R}(\tx_{\al}(u_R)_{n}-\tx_n (u_R)_{\al}),$$ then it is clear that on $\p B_{R+}$
we have $Tu_R=0.$ Denote
$$Q_R=\lt\{x\in B_R\setminus\bar{\Omega}:|\tx'|<\frac{R}{2}, \rho(\tx')<\tx_n<\lt(1-\frac{\sqrt{3}}{2}\rt)R\rt\}.$$
By Lemma \ref{C1-global-lem} we can choose $A>0$ such that $|Du_R|+Au_R$ achieves its maximum on $\p B_R\cup \p\Omega.$ Moreover, in view of the Dirichlet boundary condition, we can see that if $A,R$ are sufficiently large, then we get
$$|Du_R|\big|_{\p\Omega}-A\leq |Du_R|\big|_{\p B_R}+Ab_R.$$
Thus, the maximum of $|Du_R|+Au_R$ is achieved on $\p B_R.$
By Lemma \ref{C1-outside-lem} we derive
\[|Tu_R|\leq 3\lt[A(b_R-u_R)+\a0C_1(R+\e)^{-\a0-1}\rt]\,\,\mbox{in $\bar{Q}_R.$}\]
Since when $R>R_0$ large,
\[\underline{u}=\phi(x, C_1)+C_1(R+\e)^{-\a0}+b_R\]
is a subsolution of \eqref{ed1*}, we have in $\bar{Q}_R$
\[|Tu_R|\leq 3[A(b_R-\underline{u})+\a0C_1(R+\e)^{-\a0-1}].\]
Let $h=3A(b_R-\underline{u})+C_2(R+\e)^{-\a0-1}\sin\lt(\frac{\tx_n}{R}\rt),$ then we can choose $C_2$ so large that
$\omega>|Tu_R|$ on $\p Q_R.$ Notice that the choice of $C_2$ is independent of $R$ and $\e.$ Moreover, denote $\mathcal{L}=S^{ij}_{k}\p_{ij}$, then it is easy to see that $$\mathcal{L}h<-3Akf_\e,$$ and
\[|\mathcal{L}Tu_R|=\lt|(f_\e)_{\al}+\frac{\tx_\al}{R}(f_\e)_{n}-\frac{\tx_n}{R}(f_\e)_{\al}\rt|
\leq cf_\e\,\,\mbox{in $Q_R$,}\]
where $c>0$ is a constant independent of $R$ and $\e.$
Therefore, when $R>R_0$ and $A>0$ large we have $|\mathcal{L}Tu_R|<|\mathcal{L}h|.$ Following the argument in the proof of Lemma \ref{C2-inside-mix-lem}, we obtain $|(Tu_R)_{n}|<h_{n}$
at $x_0.$ The proof of this lemma is completed.
\end{proof}

Suppose $\k=(\k_1,\cdots,\k_{n-1})$ is the principal curvature vector of $\p B_R$. On $\p B_R,$ by Lemma \ref{C1-outside-lem} we know $|D_{\nu}u|\sim R^{-\al_0-1}.$ Moreover, it is clear that on $\p B_R$ we have $\k=O(R^{-1}), f_{\e}=O(R^{-(n+1)}),$ and
\[|D_\nu u_R|^kS_k(\k)+|D_{\nu} u_R|^{k-1}S_{k-1}(\k)(\ur)_{\nu\nu}-\sum_{\al=1}^{n-1}|D_\nu \ur|^{k-2}S_{k-2}(\k|\al)(\ur)^2_{\al \nu}=f_\e.\]
Applying Lemma \ref{C2-outside-mix-lem} we conclude the following lemma:
\begin{lemma}(\textbf{$C^2$ bound on $\p B_R$ in double normal directions})
\label{C2-outside-normal-lem}
Let $u_R$ be the solution of \eqref{ed1*}, then on $\p\Omega,$ $u$ satisfies
\[|(\ur)_{\nu\nu}|<C R^{-\a0},\]
where $\nu$ is the outward unit normal of $\p\Omega$  and the constant $C=C(\p\Omega, n, k)$ only depends on $\p\Omega,$ $n,$ and $k.$
\end{lemma}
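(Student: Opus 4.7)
The proof follows by isolating $(\ur)_{\nu\nu}$ in the algebraic identity displayed just before the statement, and estimating the remaining quantities using the a priori estimates already in hand. The plan has two steps: first show that the denominator $|\n_\nu \ur|^{k-1} S_{k-1}(\kappa)$ admits a strictly positive lower bound of the correct order on $\p B_R$, and then show that the numerator is controlled in absolute value by an appropriate negative power of $R$; combining the two yields the claimed exponent $-\a0$.

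Concretely, rewrite the identity as
\[
(\ur)_{\nu\nu} \;=\; \frac{f_\e \;-\; |\n_\nu \ur|^k S_k(\kappa) \;+\; \sum_{\al=1}^{n-1} |\n_\nu \ur|^{k-2} S_{k-2}(\kappa|\al)(\ur)^2_{\al\nu}}{|\n_\nu \ur|^{k-1} S_{k-1}(\kappa)}.
\]
On $\p B_R$ every principal curvature equals $1/R$, so $S_{k-1}(\kappa)={n-1\choose k-1}R^{-(k-1)}$ and $S_{k-2}(\kappa|\al)={n-2\choose k-2}R^{-(k-2)}$ are both strictly positive. Combined with the lower bound $|\n_\nu \ur|\ge \a0 C_0 (R+\e)^{-\a0-1}$ from Lemma \ref{C1-outside-lem} and the identity $k(\a0+2)=n$, the denominator satisfies a positive lower bound of order $R^{-(k-1)n/k}$. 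For the numerator, the explicit formula for $f_\e$ gives $|f_\e|\le CR^{-n-1}$ on $\p B_R$; the upper $C^1$ bound in Lemma \ref{C1-outside-lem} gives $|\n_\nu \ur|^k S_k(\kappa)\le CR^{-n}$; and the mixed $C^2$ bound from Lemma \ref{C2-outside-mix-lem}, together with $S_{k-2}(\kappa|\al)\sim R^{-(k-2)}$, yields
\[
\sum_{\al=1}^{n-1}|\n_\nu \ur|^{k-2} S_{k-2}(\kappa|\al)(\ur)^2_{\al\nu}\;\le\; C R^{-(k-2)(\a0+1)-(k-2)-2(\a0+1)}\;=\;C R^{-n+2}.
\]
For large $R$ the last contribution dominates the other two, so the numerator is bounded in absolute value by $CR^{-n+2}$. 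Dividing by the lower bound for the denominator,
\[
|(\ur)_{\nu\nu}|\;\le\; C R^{-n+2+(k-1)n/k}\;=\;C R^{2-n/k}\;=\;C R^{-\a0},
\]
which is exactly the claimed estimate.

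No genuine obstacle arises, because the nontrivial work has already been carried out: Lemmas \ref{C1-outside-lem} and \ref{C2-outside-mix-lem} supply the matching $C^1$ and mixed $C^2$ control on $\p B_R$, and the algebraic identity preceding the statement reduces the problem to a single unknown. The one place requiring care is signs: in the local frame fixed before the lemma the $\tx_n$ axis is the inward normal, so $(\ur)_n<0$ while $|\n_\nu \ur|=-(\ur)_n>0$ and $(\ur)_{\al\al}=|\n_\nu \ur|\kappa_\al>0$; however both $(\ur)_{\nu\nu}$ and $(\ur)^2_{\al\nu}$ are invariant under reversing the normal direction, so the identity and the bound above apply unchanged.
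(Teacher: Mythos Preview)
Your proposal is correct and follows exactly the approach the paper takes: the paper simply records the orders $|D_\nu u_R|\sim R^{-\a0-1}$, $\kappa=O(R^{-1})$, $f_\e=O(R^{-(n+1)})$, invokes Lemma \ref{C2-outside-mix-lem} for the mixed terms, and reads off the bound from the displayed algebraic identity. Your write-up merely makes the power counting explicit, and the arithmetic (using $k(\a0+2)=n$) is correct.
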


\subsection{$C^2$ global estimates}
Differentiating $f_\e$, we get
\begin{equation}
\label{c2g1}
\lto f_\e\rto_i=f_\e\frac{x_i}{r}\lt(-r^{-1}-\frac{n}{r+\e}-\frac{(k-1)\e}{r^2+\e r}\rt),
\end{equation}
and
\begin{equation}
\label{c2g2}
\begin{aligned}
\lto f_\e\rto_{ii}&=\lto f_\e\rto_i\frac{x_i}{r}[-r^{-1}-n(r+\e)^{-1}-(k-1)\e(r^2+\e r)^{-1}]\\
&+f_\e\lt(\frac{1}{r}-\frac{x_i^2}{r^3}\rt)[-r^{-1}-n(r+\e)^{-1}-(k-1)\e(r^2+\e r)^{-1}]\\
&+f_\e\frac{x_i^2}{r^2}[r^{-2}+n(r+\e)^{-2}+(k-1)\e(r^2+\e r)^{-2}(2r+\e)].\\
\end{aligned}
\end{equation}
It is easy to see that $f_\e$ satisfies
\begin{eqnarray}\label{Cond}
|Df_\e|\leq Af_\e,\,\,\mbox{$\Delta f_\e\geq-Af_{\e}$ in $B_R\setminus\bar{\Omega}$},
\end{eqnarray}
where $A>0$ is a constant independent of $R$ and $\e.$

\begin{lemma} (\textbf{$C^2$ global bound})
\label{C2-global-bound-lem}
Let $u_R$ be the solution of \eqref{ed1*}, then $u_R$ satisfies
$$|D^2\ur|<C(1+\sup\limits_{\p B_R\cup\p\Omega}|D^2\ur|),$$
where $C=C(n, k, \p\Omega)$ is a positive constant depending on $\p\Omega,$ $n,$ $k$.
\end{lemma}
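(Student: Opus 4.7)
Since Proposition~\ref{C1-bound} already supplies a uniform $C^1$ bound on $u_R$ with constant depending only on $n$, $k$, and $\p\Omega$, only the largest eigenvalue $\lambda_1(D^2 u_R)$ in the interior needs to be controlled. The plan is a standard maximum principle argument applied to an auxiliary function of the largest eigenvalue, in the spirit of Caffarelli--Nirenberg--Spruck and Chou--Wang.

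Introduce
\[
W(x) \;=\; \log \lambda_1\bigl(D^2 u_R(x)\bigr) \;+\; \varphi\bigl(|Du_R|^2\bigr),
\]
where $\lambda_1$ is the largest eigenvalue and $\varphi(t)=-\tfrac{1}{2}\log(K-t)$ with $K>\sup|Du_R|^2$, so that $\varphi'>0$ and $\varphi''=2(\varphi')^2>0$. If the maximum of $W$ on $\bar B_R\setminus\Omega$ is attained on $\p B_R\cup\p\Omega$, the conclusion is immediate from the $C^1$ bound and the definition of $\sup_{\p B_R\cup\p\Omega}|D^2u_R|$. Otherwise let $x_0\in B_R\setminus\bar\Omega$ be an interior maximizer; after rotating coordinates so that $D^2u_R(x_0)$ is diagonal with $\lambda_1=(u_R)_{11}$, it suffices to analyze $w(x)=\log(u_R)_{11}(x)+\varphi(|Du_R|^2(x))$, which locally dominates $W$ with equality at $x_0$.

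At $x_0$, apply the linearized operator $\mathcal{L}=S_k^{ij}\p_{ij}$ to $w$ and differentiate $S_k(D^2u_R)=f_\varepsilon$ twice in the direction $e_1$ to obtain
\[
S_k^{ij}(u_R)_{ij11} \;=\; (f_\varepsilon)_{11} \;-\; S_k^{ij,pq}(u_R)_{ij1}(u_R)_{pq1}.
\]
Using the first-order condition $W_i(x_0)=0$ to eliminate $(u_R)_{11i}$, and invoking the concavity of $S_k^{1/k}$ on $\Gamma_k$ (whose standard Cauchy--Schwarz consequence absorbs the unfavorable term $-S_k^{ij}(u_R)_{11i}(u_R)_{11j}/(u_R)_{11}^2$ into $-S_k^{ij,pq}(u_R)_{ij1}(u_R)_{pq1}/(u_R)_{11}\ge 0$), one obtains a differential inequality in which the favorable term $2\varphi' S_k^{ij}(u_R)_{ki}(u_R)_{kj}\ge 2\varphi' S_k^{11}(u_R)_{11}^2$ is dominant whenever $(u_R)_{11}$ is sufficiently large. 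The remaining terms $(f_\varepsilon)_{11}/(u_R)_{11}$, $2\varphi' u_k(f_\varepsilon)_k$, and $\varphi''S_k^{ij}(|Du_R|^2)_i(|Du_R|^2)_j\ge 0$ are controlled by hypothesis \eqref{Cond} and the $C^1$ bound, which forces $(u_R)_{11}(x_0)\le C=C(n,k,\p\Omega)$.

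The principal obstacle is the possibility that $f_\varepsilon$ is very small: without structural hypotheses on $f_\varepsilon$, the quotient $(f_\varepsilon)_{11}/(u_R)_{11}$ could blow up as the equation degenerates. This is exactly where \eqref{Cond} enters: the estimates $|Df_\varepsilon|\le Af_\varepsilon$ and $\Delta f_\varepsilon\ge -Af_\varepsilon$ say that $\log f_\varepsilon$ has uniformly bounded logarithmic gradient and Laplacian bounded below, so $(f_\varepsilon)_{11}/(u_R)_{11}$ stays controlled uniformly in $R$ and $\varepsilon$ whenever $(u_R)_{11}$ is large. Once this interplay between the possibly degenerate right-hand side and the third-order derivative terms is handled, the constants emerging from the computation depend only on $n$, $k$, and $\p\Omega$, completing the proof.
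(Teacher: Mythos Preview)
Your strategy is reasonable in spirit, but there are two concrete gaps, and the paper takes a different (simpler) route precisely to avoid them. First, your handling of the third-order terms is not correct as written: concavity of $S_k^{1/k}$ yields only $-S_k^{ij,pq}(u_R)_{ij1}(u_R)_{pq1}\ge -\tfrac{k-1}{k}\,(f_\e)_1^2/f_\e$, not $\ge 0$, and it does \emph{not} absorb the term $-S_k^{ij}(u_R)_{11i}(u_R)_{11j}/(u_R)_{11}^2$ that arises from the logarithm. That bad quotient has to be killed by the $\varphi''$ contribution after substituting the first-order condition $u_{11i}/u_{11}=-2\varphi' u_i u_{ii}$ (which works because your $\varphi$ satisfies $\varphi''=2(\varphi')^2>(\varphi')^2$), not by concavity; your sentence conflates the two mechanisms. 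Second, and more to the point for a bound uniform in $\e$, hypothesis \eqref{Cond} supplies only $\Delta f_\e\ge -Af_\e$, not a lower bound on the single directional derivative $(f_\e)_{11}$. Since you differentiate in one direction, you need $|(f_\e)_{11}|\le Cf_\e$; for this particular $f_\e$ one can indeed read it off from \eqref{c2g1}--\eqref{c2g2}, but it is not a consequence of \eqref{Cond}, and you do not invoke it.

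The paper avoids both issues by choosing the test function $H=\Delta u_R+\tfrac12|Du_R|^2$ instead of $\log\lambda_1+\varphi(|Du_R|^2)$. Differentiating the equation twice and \emph{summing} over all directions produces $\Delta f_\e$, which \eqref{Cond} controls directly; there is no logarithm, hence no bad quotient of third-order terms; concavity enters only through $-\sum_m S_k^{pq,rs}(u_R)_{pqm}(u_R)_{rsm}\ge -|Df_\e|^2/f_\e\ge -A^2 f_\e$; and the favorable term is $\sum_m S_k^{ij}(u_R)_{mi}(u_R)_{mj}\ge c_0 f_\e\,\Delta u_R$ by Newton--Maclaurin. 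At an interior maximum this forces $\Delta u_R\le C(n,k,\p\Omega)$, and $k$-admissibility then bounds $|D^2u_R|$. The choice of $\Delta u_R$ rather than $\lambda_1$ is exactly what makes the weak structural condition \eqref{Cond} sufficient.
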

\begin{proof}
The proof here is the same as the proof in \cite{JW1}. For the completeness, we include it here. In this proof, we will drop the subscript $R$ and write $u$ instead of $\ur.$ We consider the following test function
\[
H=\Delta u+\frac{1}{2}|Du|^2.
\]
Differentiating the equation \eqref{ed1*} twice, we get
\be\label{GC2-1*}
S_{k}^{ij}u_{ijm}=(f_\e)_m,
\ee
and
\begin{equation}
\label{GC2-11}
S_k^{ij}(\Delta u)_{ij} + \sum_i S_k^{pq,rs}u_{pqi}u_{rsi} = \Delta f_{\e}.
\end{equation}
This gives
\begin{equation}
\label{GC2-12}
S_k^{ij} H_{ij} = -\sum_i S_k^{pq,rs}u_{pqi}u_{rsi} + \sum_mS_k^{ij}u_{mi}u_{mj}+\sum_mu_m(f_{\e})_m  + \Delta f_{\e}.\nonumber
\end{equation}
Without loss of generality, we assume that $H$ attains its maximum at an interior point $x_0 \in B_R\setminus\bar{\Omega}.$
Then at $x_0$ we have
\begin{equation}
\label{GC2-14}
 H_i = (\Delta u)_i +  \sum_mu_mu_{mi}=0,
\end{equation}
and
\begin{eqnarray}\label{A1}
0 \geq  - \sum_i S_k^{pq,rs} u_{pqi}u_{rsi} +  c_0f_{\e}\Delta u +\langle Du,Df_{\e}\rangle+ \Delta f_{\e}.
\end{eqnarray}
Here we have used the Newton-Maclaurin inequality to get $\sum\limits_m S_k^{ij}u_{mi}u_{mj}\geq c_0f_\e\Delta u,$ for some constant $c_0$ only depends on $n,k$.
Using the concavity of $S_k^{1/k}$, we have
\begin{eqnarray}\label{A2}
- \sum_i S_k^{pq,rs} u_{pqi}u_{rsi} \geq -\frac{|Df_{\e}|^2}{f_{\e}}.
\end{eqnarray}
Combining \eqref{Cond}, \eqref{A1}, \eqref{A2}, and Proposition \ref{C1-bound}, we get
\begin{eqnarray}
0 &\geq&  -\frac{|Df_{\e}|^2}{f_{\e}}+  c_0f_{\e}\Delta u +\langle Du,Df_{\e}\rangle+ \Delta f_{\e}\\
&\geq &-A^2f_{\e}+c_0f_{\e}\Delta u-ACf_{\e}-Af_{\e}
\end{eqnarray}
where $C$ is some constant only depending on $\p\Omega, n,$ and $k.$ Thus, we can see that if $H$ obtains its maximum at an interior point, then $H$ is bounded by a constant $C=C(n, k, \p\Omega).$ This implies the desired result.
\end{proof}
Combing  Lemmas \ref{C2-inside-mix-lem}--\ref{C2-global-bound-lem}, we obtain the uniform $C^2$ bound for $u_R$ on $\bar{B}_R\setminus\Omega.$
\begin{proposition}
\label{C2-bound}
Let $\ur$ be the solution of \eqref{ed1*}, then there exists some constant $C$ independent of $R$ and $\e$ such that
\[|D^2\ur|\leq C,\,\,\mbox{in $\bar{B}_R\setminus\Omega.$}\]
\end{proposition}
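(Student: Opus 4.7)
The plan is to read off Proposition \ref{C2-bound} as a direct combination of the boundary estimates in Lemmas \ref{C2-inside-mix-lem}--\ref{C2-outside-normal-lem} together with the reduction in Lemma \ref{C2-global-bound-lem}, so no new technology is required. The main point is to verify that each boundary quantity that enters $\sup_{\p B_R\cup\p\Omega}|D^2 u_R|$ is bounded by a constant that does not depend on $R$ or $\e$.

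First I would treat $\p\Omega$. The purely tangential second derivatives are controlled by the identity \eqref{ed3*}, $(u_R)_{\al\b}=(u_R)_n\k_\al\d_{\al\b}$, together with the uniform $C^1$ bound of Proposition \ref{C1-bound} and the fixed principal curvatures of $\p\Omega$. The mixed tangential-normal derivatives are bounded by Lemma \ref{C2-inside-mix-lem}, and the double normal derivative by Lemma \ref{C2-inside-normal-lem}. All three bounds depend only on $\p\Omega$, $n$, $k$.

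Next I would handle $\p B_R$. The tangential-tangential estimate \eqref{ed5*} gives $|(u_R)_{\al\b}|\le \a_0 C_1(R+\e)^{-\a_0-1}R^{-1}$, the mixed estimate Lemma \ref{C2-outside-mix-lem} gives $|(u_R)_{\tau\nu}|\le CR^{-\a_0-1}$, and Lemma \ref{C2-outside-normal-lem} gives $|(u_R)_{\nu\nu}|\le CR^{-\a_0}$. For $R>R_0$ and $0<\e<\e_0$ fixed, all of these decay in $R$ (in particular, they are bounded by a constant independent of $R$ and $\e$). Consequently
\[
\sup_{\p B_R\cup\p\Omega}|D^2 u_R|\le C_*,
\]
with $C_*=C_*(\p\Omega,n,k)$ independent of $R,\e$.

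Finally, plugging this uniform boundary bound into Lemma \ref{C2-global-bound-lem} yields
\[
|D^2 u_R|\le C(1+C_*)\qquad\text{in }\bar B_R\setminus\Omega,
\]
which is the conclusion of Proposition \ref{C2-bound}. There is essentially no obstacle here beyond bookkeeping; the one point that deserves a sentence of care is checking that the $\p B_R$ bounds really are uniform in $R$, but since $\a_0=n/k-2>0$ they actually tend to zero as $R\to\infty$, so the sup over $\p B_R$ is automatically dominated by a fixed constant for $R>R_0$.
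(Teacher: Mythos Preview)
Your proposal is correct and follows exactly the paper's own argument: the paper simply states that Proposition \ref{C2-bound} follows by combining Lemmas \ref{C2-inside-mix-lem}--\ref{C2-global-bound-lem}, and your write-up is precisely the bookkeeping that makes this combination explicit. The only additional detail you supply---checking that the $\p B_R$ bounds are uniform in $R$ because $\a_0>0$---is correct and is implicit in the paper's treatment.
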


The solvability of the approximate Dirichlet problem \eqref{ed1*} follows from Lemma \ref{C0-lem}, Proposition \ref{C1-bound}, and Proposition \ref{C2-bound}. Therefore, we proved Theorem \ref{sap-thm1}.

Now, let $\lt\{u_{R_i}^{\e_i}\rt\}_{i=1}^\infty$ be a sequence of solutions to \eqref{ed1*}. We also assume as $i\goto\infty,$ $R_i\goto\infty$ and $\e_i\goto 0.$ Notice that the $C^0-C^2$ estimates we obtained in this section are independent of $R$ and $\e.$ By the standard convergence theorem, we conclude that there exists a subsequence of $\lt\{u_{R_i}^{\e_i}\rt\}_{i=1}^\infty$ converges to a function $u$ which satisfies equation \eqref{k-hessian}. This completes the proof of the first part of Theorem \ref{int-thm2}.

\section{The decay estimates}
\label{section de}
For the prototype function $|x|^{-\al_0}$, its $C^0, C^1,$ and $C^2$ decay orders are $\al_0,\al_0+1,\al_0+2$. In this section, we would like to prove that $\ur$ the solution of \eqref{ed1*} has the same decay order as $|x|^{-\al_0}$.

In view of Lemma \ref{C0-lem}, the $C^0$ decay estimate follows directly.
\begin{lemma}(\textbf{$C^0$ decay estimates})
\label{C0-interior-lem}
For any $x_0\in B_R\setminus\Omega,$ denote $|x_0|=r_0.$ Let $\ur$ be a solution to equation \eqref{ed1*}. Then we have
\begin{equation}\label{c1inside}
|\ur(x_0)|\leq B|x_0|^{-\a0}\end{equation}
for some constant $B>0$ independent of $r_0, \e,$ and $R.$
\end{lemma}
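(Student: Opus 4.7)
The plan is to read off the bound directly from the global $C^0$ barrier estimate already established in Lemma \ref{C0-lem}. Recall that Lemma \ref{C0-lem} gives
\[
\phi(x,C_1) < u_R(x) < \phi(x,C_0) \quad \text{in } B_R\setminus\bar\Omega,
\]
where $\phi(x,C)=-C(|x|+\epsilon)^{2-n/k}=-C(|x|+\epsilon)^{-\alpha_0}$.

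First I would observe that the upper bound $u_R<\phi(x,C_0)<0$ together with the lower bound $u_R>\phi(x,C_1)$ forces $u_R$ to be negative in $B_R\setminus\bar\Omega$, so $|u_R(x_0)|=-u_R(x_0)$. Then from $u_R(x_0)>\phi(x_0,C_1)=-C_1(|x_0|+\epsilon)^{-\alpha_0}$ I get
\[
|u_R(x_0)| = -u_R(x_0) < C_1(|x_0|+\epsilon)^{-\alpha_0}.
\]
Since $\alpha_0=n/k-2>0$, the function $t\mapsto t^{-\alpha_0}$ is decreasing for $t>0$, and because $\epsilon>0$ we have $(|x_0|+\epsilon)^{-\alpha_0}\leq |x_0|^{-\alpha_0}$. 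Thus
\[
|u_R(x_0)| \leq C_1 |x_0|^{-\alpha_0},
\]
and the choice $B=C_1$ completes the proof. Crucially, $C_1$ was fixed once and for all in Subsection \ref{gb} depending only on $\partial\Omega$, so $B$ is independent of $r_0,\epsilon,R$.

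There is no real obstacle here; the whole point of inserting the $\epsilon$-shift in the definition of $\phi(x,C)$ and of using it as a global sub/supersolution in Subsection \ref{gb} was precisely to pin down the $|x|^{-\alpha_0}$ decay rate at this step. The only minor subtleties to mention are (i) the sign of $u_R$, which follows from the strict upper barrier $\phi(x,C_0)<0$, and (ii) the elementary monotonicity $(|x_0|+\epsilon)^{-\alpha_0}\leq |x_0|^{-\alpha_0}$ used to remove the $\epsilon$ from the final bound.
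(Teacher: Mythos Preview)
Your proposal is correct and takes essentially the same approach as the paper, which simply states that the $C^0$ decay estimate follows directly from Lemma~\ref{C0-lem}. You have just spelled out the two-line argument the paper leaves implicit.
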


\begin{lemma}(\textbf{$C^1$ decay estimates})
\label{C1-interior-lem}
For any $x_0\in\mathbb{R}^n\setminus\Omega,$ denote $|x_0|=r_0.$ Let $\ur$ be a solution to equation \eqref{ed1*} with $R>10r_0.$ Then we have
\begin{equation}\label{c1inside}
|D\ur(x_0)|\leq B|x_0|^{-\a0-1}\end{equation}
for some constant $B>0$ independent of $r_0, \e,$ and $R.$
\end{lemma}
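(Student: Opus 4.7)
The plan is to reduce the claimed $C^1$ decay estimate to the interior Bernstein-type bound of Lemma~\ref{C1-global-lem} by a rescaling that exploits the scale invariance of the prototype $-|x|^{-\a0}$ under the transformation $v(y)=r_0^{\a0}\ur(r_0y)$.

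First I would dispose of the bounded regime. If $r_0\le R_1$ for some fixed $R_1>0$ depending only on $\Omega$, then Proposition~\ref{C1-bound} gives $|D\ur(x_0)|\le C\le CR_1^{\a0+1}r_0^{-\a0-1}$, which is already the desired bound. Thus it suffices to treat $r_0\ge R_1$, and I choose $R_1$ large enough that $r_0 y\in B_R\setminus\bar\Omega$ for every $y$ in the annulus $\mathcal A=\{1/2\le|y|\le 2\}$ (using the hypothesis $R>10r_0$).

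Next, rescale. Set $v(y)=r_0^{\a0}\ur(r_0y)$. Since $\a0+2=n/k$, one has $D^2v(y)=r_0^{n/k}D^2\ur(r_0y)$ and hence
\[
S_k(D^2v(y))=r_0^n S_k(D^2\ur(r_0y))=r_0^n f_\e(r_0y)=:\tilde f_{\e,r_0}(y).
\]
From the explicit formula for $f_\e$ in Subsection~\ref{gb}, $\tilde f_{\e,r_0}(y)\le C\e/(r_0|y|^{n+1})\le C$ on $\mathcal A$, uniformly in $r_0$, $R$, and $\e$. Moreover, by Lemma~\ref{C0-interior-lem}, $|v(y)|=r_0^{\a0}|\ur(r_0y)|\le B|y|^{-\a0}\le C$ on $\mathcal A$. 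Thus $v$ is a $k$-admissible solution, with uniformly bounded $L^\infty$-norm and uniformly bounded right-hand side.

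The main step is to run a localized version of the Bernstein argument of Lemma~\ref{C1-global-lem} on $v$ over $\mathcal A$. I would apply it to a test function of the form
\[
W(y,\xi)=\eta(y)\bigl(D_\xi v(y)+Bv(y)\bigr),\qquad\xi\in\mathbb S^{n-1},
\]
with $\eta$ a smooth cutoff equal to $1$ on $\{3/4\le|y|\le 5/4\}$ and supported in $\mathcal A$, and $B$ a large constant. At an interior maximum of $W$, differentiating $S_k(D^2v)=\tilde f_{\e,r_0}$ once and examining $S_k^{ij}W_{ij}$ produces the same favorable terms as in Lemma~\ref{C1-global-lem}: the dominant positive contribution $Bk\tilde f_{\e,r_0}$ and the good quadratic term in $S_k^{ij}v_{mi}v_{mj}$ absorb the derivative-of-RHS terms (controlled via the analogue of \eqref{Cond} applied to $\tilde f_{\e,r_0}$), together with the cutoff error terms involving $S_k^{ij}\eta_i$ and $S_k^{ij}\eta_{ij}$, which are harmless because $\|v\|_{L^\infty(\mathcal A)}$ is already under control. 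This yields $|Dv(y_0)|\le C$ uniformly for $|y_0|=1$, and scaling back via $Dv(y)=r_0^{\a0+1}D\ur(r_0 y)$ at $y_0=x_0/r_0$ gives $|D\ur(x_0)|\le Cr_0^{-\a0-1}$.

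The main obstacle is precisely the localized Bernstein step: unlike in the proof of Lemma~\ref{C1-global-lem}, the maximum of the test function cannot be chased to the boundary, so the cutoff $\eta$ must be carried through the entire differentiation of the nonlinear operator $S_k$, and the first- and second-order error terms generated by $D\eta$ and $D^2\eta$ must be absorbed against the concavity of $S_k^{1/k}$ and the Newton--Maclaurin contribution. Once this bookkeeping is executed carefully, the uniform bound on $|Dv|$ on $\{|y|=1\}$---and hence the desired decay for $D\ur$---follows.
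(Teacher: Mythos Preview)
Your rescaling strategy and the reduction of the bounded regime to Proposition~\ref{C1-bound} are both correct, but the localized Bernstein step on $v$ has a genuine gap.

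With the test function $W=\eta(D_\xi v+Bv)$ as written, no term $S_k^{ij}v_{mi}v_{mj}$ ever appears: differentiating $S_k(D^2v)=\tilde f_{\e,r_0}$ once along $\xi$ gives only $S_k^{ij}v_{\xi ij}=(\tilde f_{\e,r_0})_\xi$. More importantly, the term you call ``dominant'', $Bk\tilde f_{\e,r_0}$, is of order $\e/r_0$ and tends to zero as $r_0\to\infty$ or $\e\to 0$; it cannot absorb anything uniformly. And the cutoff errors are \emph{not} harmless just because $\|v\|_{L^\infty(\mathcal A)}$ is bounded: at the interior maximum one obtains
\[
0\ \ge\ \eta\bigl[(\tilde f_{\e,r_0})_\xi+Bk\tilde f_{\e,r_0}\bigr]
+(D_\xi v+Bv)\Bigl[S_k^{ij}\eta_{ij}-\tfrac{2}{\eta}\,S_k^{ij}\eta_i\eta_j\Bigr],
\]
and the bracket on the right is of size $\sum_i S_k^{ii}=(n-k+1)S_{k-1}(D^2v)$, a quantity that is not controlled by $\|v\|_{L^\infty}$ and can be arbitrarily large for $k$-admissible $v$ even when $S_k(D^2v)$ is small. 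There is nothing on the favorable side that competes with it; the same obstruction persists if you replace $D_\xi v$ by $\tfrac12|Dv|^2$.

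The paper handles exactly this issue by working directly in the original coordinates with the Chou--Wang test function $G=u_\xi\,\varphi(u)\,\rho$, where $\varphi(\tau)=(M-\tau)^{-1/2}$, $\rho=1-|x-x_0|^2/\tilde r^2$, $M\sim r_0^{-\a0}$ and $\tilde r\sim r_0$. The multiplier $\varphi$ is the key structural ingredient you are missing: its second derivative manufactures a positive cubic term $\rho\,S_k^{11}u_1^3$, and the first-order condition at the maximum forces $u_{11}<0$, which in turn yields $S_k^{11}\ge\frac{1}{n-k+1}\sum_i S_k^{ii}$. Only this comparison lets the cubic term dominate all the $\sum_i S_k^{ii}$ cutoff errors once $u_1\rho$ exceeds $C M/\tilde r\sim r_0^{-\a0-1}$. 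Your rescaling does not sidestep this difficulty: to finish, you would still need the Chou--Wang mechanism on $v$, so the rescaling buys nothing over the paper's direct argument.
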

\begin{proof}For our convenience, in this proof we drop the subscript $R$ and write $u$ instead of $\ur.$
Denote $\rho_1:=\max\limits_{x\in\p\Omega}|x|$ and let $a=\frac{5}{2}\rho_1.$ Then for any $x\in\mathbb{R}^n\setminus B_a(0)$, we have
\[\text{dist}(x, \p\Omega)+\rho_1>|x|,\] which implies
\[\text{dist}(x, \p\Omega)>|x|-\rho_1>\frac{|x|}{2}.\]
When $x_0\in\bar{B}_a(0)\setminus\Omega,$ we can see that $|x_0|^{-\a0-1}\geq a^{-\a0-1},$ and by Proposition \ref{C1-bound} we also know $|Du|$ has an uniform upper bound in $\bar{B}_a(0)\setminus\Omega.$ This gives when $x_0\in\bar{B}_a(0)\setminus\Omega,$ \eqref{c1inside} is satisfied for some constant $B=B(n, k, \p\Omega)>0.$

In the following, we will prove \eqref{c1inside} for the case when $x_0\in\mathbb{R}^n\setminus B_a(0).$
Choose any $x_0\in\mathbb{R}^n\setminus B_a(0)$ and denote $\tr=\frac{1}{8}|x_0|.$ Then
in $B_{\tr}(x_0)$ we have $\frac{7}{8}|x_0|\leq|x|\leq\frac{9}{8}|x_0|.$ Moreover, by Lemma \ref{C0-lem}, we also have
\[C_0\lt(\frac{9}{8}|x_0|\rt)^{-\a0}\leq|u(x)|\leq C_1\lt(\frac{7}{8}|x_0|\rt)^{-\a0}.\]
Here and in the rest of this paper we will always assume $\e>0$ is very small.
We will let $M=4C_1\lt(\frac{7}{8}|x_0|\rt)^{-\a0}.$  Following the argument of Chou-Wang \cite{CW}, we consider the test function
\[G(x)=u_\xi(x)\varphi(u)\rho(x),\]
where $\xi$ is some unit vector field, $\varphi(\tau)=(M-\tau)^{-\frac{1}{2}}$ and $\rho(x)=1-\frac{|x-x_0|^2}{\tr^2}.$ We will show
$|Du(x_0)|\leq C_2\frac{M}{\tr}.$
Suppose $G$ attains its maximum at $x=\hat{x}$ and $\xi=e_1.$ We rotate the coordinate so that $u_{\al\beta}=u_{\al\al}\delta_{\al\beta}$ for $\al, \beta\geq 2$. Then at $\hat{x}$, we have $G_i=0$ and
$\{G_{ij}\}\leq 0,$ $i, j\geq 1.$ That is
\begin{equation}
\label{c1i1}
u_{1i}=\frac{-u_1}{\varphi\rho}(u_i\varphi'\rho+\varphi\rho_i),
\end{equation}
and
\begin{equation}
\label{c1i2}
\begin{aligned}
0\geq&\varphi\rho(f_\e)_1-\varphi\rho S_k^{ij}u_{1i}\lt(-\frac{\varphi_j}{\varphi}-\frac{\rho_j}{\rho}\rt)\\
&+S_k^{ij}\lt(\varphi_{ij}-\frac{\varphi_i\varphi_j}{\varphi}\rt)u_1\rho
+S_k^{ij}\rho_{ij}u_1\varphi-\frac{1}{\rho}S_k^{ij}\rho_i\rho_ju_1\varphi\\
=&\varphi\rho(f_\e)_1+kf_\e u_1\rho\varphi'+u_1\rho\lt(\varphi''-\frac{2\varphi'^2}{\varphi}\rt)S_k^{ij}u_iu_j+u_1\varphi S_k^{ij}\rho_{ij}\\
&-u_1\varphi'S_k^{ij}(u_i\rho_j+u_j\rho_i)-\frac{2u_1\varphi}{\rho}S_k^{ij}\rho_i\rho_j.
\end{aligned}
\end{equation}
By our choice of $\varphi$ we have
\[\varphi'(u)=\frac{1}{2}(M-u)^{-\frac{3}{2}},\,\,\varphi''(u)=\frac{3}{4}(M-u)^{-\frac{5}{2}}.\]
Therefore, $\varphi''-\frac{2\varphi'^2}{\varphi}=\frac{1}{4}(M-u)^{-\frac{5}{2}}>\frac{1}{16}M^{-\frac{5}{2}}.$
Moreover, by virtue of \eqref{c2g1} we can see \eqref{c1i2}
implies
\begin{equation}
\label{c1i3}
\begin{aligned}
0\geq&-\frac{C\varphi\rho f_\e}{\tr}+\rho S_k^{11}u_1^3\frac{1}{16}M^{-\frac{5}{2}}-\frac{2u_1\varphi}{\tr^2}\sum_i S_k^{ii}\\
&-\lt[\varphi'u_1^2|D\rho|+\frac{\varphi u_1}{\rho}|D\rho|^2\rt]C(n)\sum_i S_k^{ii}.\\
\end{aligned}
\end{equation}
This yields
\begin{equation}
\label{c1i4}
\begin{aligned}
0\geq&\rho S_k^{11}u_1^3-\frac{2u_1M^{-\frac{1}{2}}}{\tr^2}16M^{\frac{5}{2}}\sum_i S_k^{ii}-\lt(M^{-\frac{3}{2}}\frac{u_1^2}{\tr}+\frac{4u_1M^{-\frac{1}{2}}}{\rho\tr^2}\rt)C(n)16M^{\frac{5}{2}}\sum_i S_k^{ii}\\
&-\frac{CM^{-\frac{1}{2}}f_\e}{\tr}16M^{\frac{5}{2}}.
\end{aligned}
\end{equation}
where $C$ is independent of $M$ and $\tr.$

 Now, we prove by contradiction and assume
$|Du(x_0)|>C_2\frac{M}{\tr}$ for some undetermined constant $C_2>0$. Then since $G(\hat{x})\geq G(x_0),$ we have
\[u_1\rho(\hat{x})>\frac{C_0}{C_1}\lt(\frac{7}{9}\rt)^{\al_0}\cdot C_2\frac{M}{\tr}.\]
Suppose $\l'=(u_{22}, \cdots, u_{nn}).$ Since $S_k=S_{k-1}(\l')u_{11}+S_k(\l')-\sum\limits_{i\geq 2}S_{k-2}(\l'|i)u_{1i}^2,$
we get $S_k^{11}=S_{k-1}(\l')$ and $S_k^{ii}=S_{k-2}(\l'|i)u_{11}+S_{k-1}(\l'|i),\,\,i\geq 2.$
It is clear that from  \eqref{c1i1} we have, $u_{11}\leq-\frac{\varphi'}{2\varphi}u_1^2<0.$ Therefore,
$\sum\limits_{i\geq 2}S_k^{ii}<(n-k)S_k^{11},$ which yields $S_k^{11}>\frac{1}{n-k+1}\sum_i S_k^{ii}.$
On the other hand, since $S_k^{\frac{1}{k}}$ is concave, we know $\frac{1}{k}f_\e^{\frac{1}{k}-1}\sum_i S_k^{ii}>\lt(C_n^k\rt)^{\frac{1}{k}}.$
Thus, $\sum_i S_k^{ii}>C(n, k)f_\e^{1-\frac{1}{k}}.$ Substituting the above inequalities into \eqref{c1i4}, we obtain
\begin{equation}
\label{c1i5}
\begin{aligned}
0&\geq\frac{1}{n-k+1}\rho u_1^3-\frac{32M^2}{\tr^2}u_1-16C(n)\frac{M}{\tr}u_1^2\\
&-64C(n)\frac{M^2}{\tr^2}\frac{u_1}{\rho}-\frac{CM^2}{\tr}f_\e^{\frac{1}{k}}.
\end{aligned}
\end{equation}
Note that $f_\e\sim\tr^{-(n+1)},$ it's easy to see that $f_\e^{\frac{1}{k}}<\frac{M}{\tr^2}.$ Therefore, when
$u_1\rho(\hat{x})>\frac{C_0}{C_1}\lt(\frac{7}{9}\rt)^{\al_0}\cdot C_2\frac{M}{\tr}$ and $C_2>C(n, k, \p\Omega)$ is sufficiently large, the right hand side of \eqref{c1i5}
would be positive. This leads to a contradiction.
\end{proof}

\begin{lemma}(\textbf{$C^2$ decay estimates})
\label{C2-interior-lem}
For any $x_0\in\mathbb{R}^n\setminus\Omega,$ denote $|x_0|=r_0.$ Let $\ur$ be a solution to equation \eqref{ed1*} with $R\gg r_0^{1+\frac{2}{\a0}}.$ Then we have
\begin{equation}\label{c2inside}
|D^2\ur(x_0)|\leq B|x_0|^{-\a0-2}\end{equation}
for some constant $B>0$ independent of $r_0, \e,$ and $R.$
\end{lemma}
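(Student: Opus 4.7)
The plan is to combine the $C^0$ and $C^1$ decay of Lemmas~\ref{C0-interior-lem}--\ref{C1-interior-lem} with a Pogorelov-type interior $C^2$ estimate, carefully tracking the scaling in $|x_0|$. As in the proof of Lemma~\ref{C1-interior-lem}, I would first dispose of the bounded region: for $x_0\in \bar B_a\setminus\Omega$ with $a=\tfrac{5}{2}\rho_1$, the inequality \eqref{c2inside} follows from Proposition~\ref{C2-bound} together with the obvious lower bound on $|x_0|^{-\alpha_0-2}$ there. So from now on I may assume $|x_0|>a$.

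Set $\tilde r=\tfrac18|x_0|$; then $B_{\tilde r}(x_0)\subset\R^n\setminus\bar\Omega$, and by Lemmas~\ref{C0-interior-lem}--\ref{C1-interior-lem},
\[
|u|\leq M:=4C_1\bigl(\tfrac78|x_0|\bigr)^{-\alpha_0},\qquad |Du|\leq B'|x_0|^{-\alpha_0-1}\quad\text{in }B_{\tilde r}(x_0).
\]
The hypothesis $R\gg r_0^{1+2/\alpha_0}$ is what guarantees $\tilde r\ll R$, so that the $C^1$ decay estimate of the previous lemma is indeed available on the whole ball. The main step is then to apply a Chou--Wang/Pogorelov-type test function, in the same spirit as Lemma~\ref{C1-interior-lem} but one derivative higher. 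I would consider, for a fixed unit direction $\xi$,
\[
G(x)=u_{\xi\xi}(x)\,\varphi(u)^{\gamma}\,\rho(x),
\qquad \varphi(u)=(M-u)^{-1/2},\qquad \rho(x)=1-\frac{|x-x_0|^2}{\tilde r^2},
\]
maximize $G$ over $\bar B_{\tilde r}(x_0)\times\mathbb S^{n-1}$, and assume the maximum is attained at an interior point $\hat x$ in direction $e_1$. Rotating coordinates so that $(u_{\alpha\beta})_{\alpha,\beta\geq 2}$ is diagonal, the first-order conditions $G_i=0$ give
\[
\frac{u_{11i}}{u_{11}}=-\frac{\gamma\varphi'}{\varphi}u_i-\frac{\rho_i}{\rho},
\]
and feeding this into $S_k^{ij}G_{ij}\leq 0$ while differentiating the equation $S_k(D^2u)=f_\epsilon$ twice yields, after using the concavity of $S_k^{1/k}$ to dominate the bad cubic term $\sum_iS_k^{pq,rs}u_{pqi}u_{rsi}$ exactly as in Lemma~\ref{C2-global-bound-lem}, an inequality of the schematic form
\[
0\geq c_0\rho\,u_{11}^2 S_k^{11}-C\lt(\frac{M}{\tilde r^2}+\frac{u_{11}}{\tilde r^2}+\frac{1}{\rho\tilde r^2}\rt)u_{11}\sum_iS_k^{ii}-C\tilde r^{-2}f_\epsilon^{1/k}M^2 u_{11}.
\]
Since $u_{11}$ is the maximum eigenvalue, the Newton--Maclaurin inequality gives $S_k^{11}\geq\frac{1}{n-k+1}\sum_iS_k^{ii}$, and a lower bound $\sum_iS_k^{ii}\geq C(n,k)f_\epsilon^{1-1/k}$ follows from the concavity of $S_k^{1/k}$; noting the decay $f_\epsilon^{1/k}\lesssim \tilde r^{-(n+1)/k}\lesssim M/\tilde r^2$, one arrives at an upper bound
\[
u_{11}(\hat x)\,\rho(\hat x)\leq C\,\frac{M}{\tilde r^2},
\]
which, evaluated at $x=x_0$, produces $u_{\xi\xi}(x_0)\leq C\,M/\tilde r^2\lesssim |x_0|^{-\alpha_0-2}$ for every unit $\xi$. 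To pass from this upper bound on all eigenvalues to a two-sided bound, I would invoke $k$-admissibility together with $S_k(D^2u)=f_\epsilon=O(|x_0|^{-n-1})$: the constraint $\lambda\in\bar\Gamma_k$ forces the smallest eigenvalue to be controlled from below by the largest once $S_k$ is fixed, which delivers \eqref{c2inside} with $B$ independent of $r_0$, $R$, and $\epsilon$.

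The main obstacle is the Pogorelov-type step: selecting the exponent $\gamma$ and the auxiliary factors so that, in the inequality obtained after differentiating the equation twice, the good term $\rho u_{11}^2 S_k^{11}$ really dominates the family of error terms coming from $\rho$, $\varphi$, and from the derivatives of $f_\epsilon$. Here the information \eqref{Cond} on $Df_\epsilon$ and the fact that on $B_{\tilde r}(x_0)$ one has $f_\epsilon,|Df_\epsilon|,|\Delta f_\epsilon|=O(\tilde r^{-n-1})$ should suffice, but the balance is delicate because $S_k$ is only degenerate elliptic on $\bar\Gamma_k$ and the Newton--Maclaurin inequality $S_k^{11}\gtrsim\sum_iS_k^{ii}$ relies on $u_{11}$ being the top eigenvalue, a condition that must be preserved throughout the argument. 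A second, more bookkeeping obstacle is to verify that all constants remain independent of $R$ and $\epsilon$, which requires the explicit $\epsilon$-uniform bounds on $f_\epsilon$ already established in Section~\ref{section-cs}.
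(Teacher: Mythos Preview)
Your plan has a genuine gap at the heart of the Pogorelov step. You assert that ``since $u_{11}$ is the maximum eigenvalue, the Newton--Maclaurin inequality gives $S_k^{11}\geq\frac{1}{n-k+1}\sum_iS_k^{ii}$.'' This is backwards: $S_k^{ii}=S_{k-1}(\lambda\mid i)$ is \emph{smallest} when $\lambda_i$ is largest, so at your maximum point $S_k^{11}$ is the smallest of the $S_k^{ii}$, not comparable from below to their sum. The analogous inequality in the $C^1$ proof (Lemma~\ref{C1-interior-lem}) worked only because there the first-order condition forced $u_{11}<0$, making it a small eigenvalue; that mechanism is absent here. Relatedly, with your auxiliary factor $\varphi(u)$ depending on $u$ (not on $|Du|^2$), I do not see how the schematic term $c_0\rho\,u_{11}^2S_k^{11}$ arises at all: differentiating $\varphi(u)$ produces $S_k^{ij}u_iu_j$ and $kf_\e$, not $F^{ii}u_{ii}^2$. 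Using concavity ``exactly as in Lemma~\ref{C2-global-bound-lem}'' only throws away the third-order term; it does not generate a good quadratic term to absorb the errors.

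The paper's proof handles exactly these issues by different choices. It works on the level-set region $\Omega_u=\{|u|<2|u(x_0)|\}$ with cutoff $\rho=1-\frac{u}{2u_0}$ and auxiliary $\varphi(V)=(M-|Du|^2)^{-\beta}$; the dependence on $|Du|^2$ is what produces the positive term $\frac{\varphi'}{\varphi}F^{ii}u_{ii}^2$. It then performs the Chou--Wang two-case split: when $u_{kk}\geq\theta_0u_{11}$ one has $\sum F^{ii}u_{ii}^2\geq c_0u_{11}^2\sum F^{ii}$ directly, and when $u_{kk}<\theta_0u_{11}$ one exploits $2\lambda_1S_{k-2}(\lambda\mid 1p)>\tfrac32 S_{k-1}(\lambda\mid p)$ to control the third-order terms. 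This split is essential and is not circumvented by concavity alone. Finally, note that in your ball approach the hypothesis $R\gg r_0^{1+2/\alpha_0}$ is idle (you only need $R>10r_0$ for $B_{\tilde r}(x_0)\subset B_R$); in the paper it is genuinely used because $\Omega_u$ may reach $\partial B_R$, and one needs $R^{-\alpha_0}\cdot r_0^{\alpha_0+2}\leq C$ to bound $G$ there via the $C^2$ boundary estimates on $\partial B_R$.
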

\begin{proof}
For our convenience, in this proof we drop the subscript $R$ and write $u$ instead of $\ur.$
Let $u_0=u(x_0)$, then by Lemma \ref{C0-lem} we know $C_0r_0^{-\a0}<|u_0|<C_1r_0^{-\a0},$ where $\a0=\frac{n}{k}-2.$
We will consider the domain $\Omega_u:=\{x\in B_R\setminus \bar{\Omega}\mid |u|<2|u_0|\}.$

\textbf{Step1.} In this step, we want to show when $x\in \Omega_u,$ $|Du(x)|<c_1r_0^{-\a0-1}.$
By Lemma \ref{C0-lem} and Lemma \ref{C1-interior-lem} we know on $\p\Omega_u\setminus\p B_R$ we have
$|Du(x)|<c_2r_0^{-\a0-1}.$ By Lemma \ref{C1-outside-lem} we also know on $\p B_R$ $|Du(x)|<c_3R^{-\a0-1}.$
Applying Lemma \ref{C1-global-lem} in the domain $\Omega_u$ and choose $A=\frac{c_4}{r_0}$ we obtain
\[|Du|<c_5r_0^{-\a0-1}\,\,\mbox{in $\Omega_u.$}\]
Here $c_1,c_2,c_3,c_4,c_5$ are uniform constants independent of $r_0, \e,$ and $R$.

\textbf{Step2.} In this step we will prove \eqref{c2inside}. Denote $V:=|Du|^2$ and
$M:=4\max\limits_{x\in\bar{\Omega}_u}V\sim r_0^{-2(\a0+1)}.$ Let $\varphi(V)=(M-V)^{-\beta},$ where $\beta=\frac{\a0+2}{2\a0+2}.$ By our choice of $\beta,$ we can see that
$\varphi(V)\sim r_0^{\a0+2}.$

Consider $G=\rho^4u_{\xi\xi}\varphi(V)$, where $\rho=1-\frac{u}{2u_0}$ and $\xi$ is some unit vector field. Applying inequality \eqref{ed5*}, Lemma \ref{C2-outside-mix-lem}, and Lemma
\ref{C2-inside-normal-lem} we know when $R\gg r_0^{1+\frac{2}{\a0}},$ $G\leq CR^{-\a0}r_0^{\a0+2}\leq C$ on $\p B_R$. Moreover, on $\p\Omega_u\setminus\p B_R,$ we have $G=0.$ Our goal is to show
$\max\limits_{\bar{\Omega}_u}G\leq C,$ for some $C$ independent of $r_0, R,$ and $\e.$

Following the argument of Chou-Wang \cite{CW}, we assume $G_{\max}$ is achieved at an interior point $\hat{x}.$ We may also rotate the coordinate such that
at $\hat{x},$ we have $u_{ij}(\hat{x})=u_{ii}(\hat{x})\delta_{ij}=\lambda_i\delta_{ij}$ and $u_{11}\geq\cdots\geq u_{nn}.$ Then at this point, we have
\begin{equation}
\label{c2i1}
0=\frac{G_i}{G}=4\frac{\rho_i}{\rho}+\frac{\varphi_i}{\varphi}+\frac{u_{11i}}{u_{11},}
\end{equation}
and
\begin{equation}
\label{c2i2}
\begin{aligned}
0&\geq 4F^{ii}\lt(\frac{\rho_{ii}}{\rho}-\frac{\rho_i^2}{\rho^2}\rt)+F^{ii}\lt(\frac{\varphi_{ii}}{\varphi}-\frac{\varphi_i^2}{\varphi^2}\rt)
+F^{ii}\lt(\frac{u_{11ii}}{u_{11}}-\frac{u_{11i}^2}{u_{11}^2}\rt),
\end{aligned}
\end{equation}
where $F=S_k^{\frac{1}{k}}$ and $F^{ii}=\frac{\p F}{\p u_{ii}}.$
We will analyze \eqref{c2i2} in two cases.

Case 1. $u_{kk}\geq\theta_0u_{11}$ at $\hat{x},$ where $\theta_0>0$ is some fixed small constant to be determined.
By \eqref{c2i1} we get for any $\gamma>0$
\[\lt(\frac{u_{11i}}{u_{11}}\rt)^2\leq(1+\gamma)\frac{\varphi_i^2}{\varphi^2}+16\lt(1+\frac{1}{\gamma}\rt)\frac{\rho_i^2}{\rho^2}\]
Plugging it into \eqref{c2i2}, we obtain
\begin{equation}
\label{c2i3}
\begin{aligned}
0&\geq 4F^{ii}\lt\{\frac{\rho_{ii}}{\rho}-\lt[1+4\lt(1+\frac{1}{\gamma}\rt)\rt]\frac{\rho_i^2}{\rho^2}\rt\}+F^{ii}\lt[\frac{\varphi_{ii}}{\varphi}-(2+\gamma)\frac{\varphi_i^2}{\varphi^2}\rt]+F^{ii}\frac{u_{11ii}}{u_{11}},
\end{aligned}
\end{equation}
Note that
\[F^{ii}\lt[\frac{\varphi_{ii}}{\varphi}-(2+\gamma)\frac{\varphi_i^2}{\varphi^2}\rt]=
\lt[\frac{\varphi''}{\varphi}-(2+\gamma)\frac{\varphi'^2}{\varphi^2}\rt]F^{ii}V_i^2+\frac{\varphi'}{\varphi}F^{ii}V_{ii}.\]
By a straightforward calculation we can see that, in order for $\frac{\varphi''}{\varphi}-(2+\gamma)\frac{\varphi'^2}{\varphi^2}$ to be nonegative,
we need to choose $\gamma\leq\frac{\a0}{\a0+2}.$ In this case we have
\[F^{ii}\lt[\frac{\varphi_{ii}}{\varphi}-(2+\gamma)\frac{\varphi_i^2}{\varphi^2}\rt]
\geq\frac{\varphi'}{\varphi}F^{ii}(2u_{ii}^2+2u_ku_{kii}).\]
Denote $\hat{f}=f_\e^{\frac{1}{k}},$ then when  $\gamma\leq\frac{\a0}{\a0+2}$, \eqref{c2i3} becomes
\begin{equation}
\label{c2i4}
\begin{aligned}
0&\geq\frac{4\hat{f}}{2|u_0|\rho}-\frac{4[4(1+\gamma^{-1})+1]|Du|^2}{4\rho^2u_0^2}\sum_i F^{ii}+2\frac{\varphi'}{\varphi}F^{ii}u_{ii}^2+\frac{2\varphi'}{\varphi}Du\cdot D\hat{f}+\frac{\hat{f}_{11}}{u_{11}}
\end{aligned}
\end{equation}
By formula (3.2) in \cite{CW}, we get
\[\sum_{i=1}^n F^{ii}u_{ii}^2\geq F^{kk}u_{kk}^2\geq c_0u_{11}^2\sum_i F^{ii}\]
for some constant $c_0=c_0(n, k, \theta_0).$
Recall \eqref{c2g1} and \eqref{c2g2} we obtain
\begin{eqnarray}\label{CC1}
\hat{f}_i=\frac{1}{k}f_\e^{\frac{1}{k}-1}(f_\e)_i\sim \frac{\hat{f}}{r},
\end{eqnarray}
and
\begin{eqnarray}\label{CC2}
\hat{f}_{11}=\frac{1}{k}f_\e^{\frac{1}{k}-1}(f_\e)_{11}+\frac{1}{k}\lt(\frac{1}{k}-1\rt)
f_\e^{\frac{1}{k}-1}(f_\e)^2_1\sim\frac{\hat{f}}{r^2}.
\end{eqnarray}
Therefore, \eqref{c2i4} implies
\begin{equation}
\label{c2i5}
\begin{aligned}
0&\geq-\frac{C|Du|^2}{4u_0^2\rho^2}\sum F^{ii}+2\frac{\varphi'}{\varphi}c_0u_{11}^2\sum F^{ii}-\frac{C\varphi'}{\varphi}|Du|\frac{\hat{f}}{r}-\frac{C\hat{f}}{r^2u_{11}}.
\end{aligned}
\end{equation}
Since $\sum F^{ii}\geq(C_n^k)^{\frac{1}{k}},$ we get
\begin{equation}
\label{c2i6}
2\frac{\varphi'}{\varphi}c_0u_{11}^2<\frac{C|Du|^2}{4u_0^2\rho^2}+\frac{C\varphi'}{\varphi}|Du|\frac{\hat{f}}{r}+\frac{C\hat{f}}{r^2u_{11}}.
\end{equation}
Denote $u_{11}\rho(\hat{x}):=X$ then \eqref{c2i6} implies
\begin{equation}
\label{c2i7}
r_0^{2(\a0+1)}X^2\leq Cr_0^{-2}+Cr_0^{-2-\frac{1}{k}}+C\frac{r_0^{-\a0-4-\frac{1}{k}}}{X}.
\end{equation}
It's easy to see that if $X>Br_0^{-(\a0+2)}$ for some $B>0$ large, then inequality \eqref{c2i7} would not hold,
which leads to a contradiction.

Case 2. $u_{kk}<\theta_0u_{11}$ at $\hat{x}.$ By \eqref{c2i1} we have
\[\frac{u_{111}^2}{u_{11}^2}\leq(1+\gamma)\frac{\varphi_1^2}{\varphi^2}+16\lt(1+\gamma^{-1}\rt)\frac{\rho_1^2}{\rho^2},\] and
for $i\geq 2,$
\[4\frac{\rho_i^2}{\rho^2}=\frac{1}{4}\lt(\frac{\varphi_i}{\varphi}+\frac{u_{11i}}{u_{11}}\rt)^2
\leq\frac{1}{2}\lt(\frac{\varphi_i}{\varphi}\rt)^2+\frac{1}{2}\frac{u_{11i}^2}{u_{11}^2}.\]
Here, $\gamma$ takes the same value as in Case 1.
Equation \eqref{c2i2} can be written as
\begin{equation}
\label{c2i8}
\begin{aligned}
0&\geq\lt\{\sum_{i=1}^n\lt[4F^{ii}\frac{\rho_{ii}}{\rho}+F^{ii}\lt(\frac{\varphi_{ii}}{\varphi}-(2+\gamma)\frac{\varphi_i^2}{\varphi^2}\rt)\rt]\rt.\left.-4[1+4(1+\gamma^{-1})]F^{11}\frac{\rho_{1}^{2}}{\rho^2}\right\}\\
&+\lt\{\sum_{i=1}^nF^{ii}\frac{u_{11ii}}{u_{11}}-\frac{3}{2}\sum_{i=2}^nF^{ii}\frac{u_{11i}^2}{u_{11}^2}\rt\}\\
&:=I_1+I_2
\end{aligned}
\end{equation}
A straightforward calculation implies
\begin{equation}
\label{c2i9}
\begin{aligned}
I_1&\geq\frac{4\hat{f}}{2|u_0|\rho}+2\frac{\varphi'}{\varphi}F^{ii}u_{ii}^2+2\frac{\varphi'}{\varphi}Du\cdot D\hat{f}-C\frac{F^{11}|Du|^2}{4u_0^2\rho^2}.
\end{aligned}
\end{equation}
It's well known that $F$ is concave. Therefore,
\begin{equation}
\label{c2i10}
\begin{aligned}
&F^{pq,rs}u_{pq1}u_{rs1}\leq 2\sum\limits_{p>1}\frac{F^{pp}-F^{11}}{\l_p-\l_1}u^2_{11p}\\
&=\frac{2}{k}f_\e^{\frac{1}{k}-1}\sum\limits_{p>1}\frac{S_{k-1}(\l|p)-S_{k-1}(\l|1)}{\l_p-\l_1}u^2_{11p}\\
&=-\frac{2}{k}f_\e^{\frac{1}{k}-1}\sum\limits_{p>1}S_{k-2}(\l|1p)u^2_{11p}.
\end{aligned}
\end{equation}
This gives
\begin{equation}
\label{c2i11}
u_{11}I_2\geq\hat{f}_{11}+\frac{1}{k}f_\e^{\frac{1}{k}-1}\lt[2\sum_{p>1}S_{k-2}(\l|1p)-\frac{3S_{k-1}(\l|p)}{2\l_1}\rt]u_{11p}^2.
\end{equation}
Note that we assumed $u_{kk}<\theta_0u_{11}.$ Since $\sum_{j=k}^nu_{jj}\geq 0$ we can derive $|u_{jj}|\leq C(n, k)\theta_0u_{11},$ $j\geq k.$
Applying Lemma 3.1 of \cite{CW}, we know when $\theta_0$ is sufficiently small, we have
$2\l_1S_{k-2}(\l|1p)>\frac{3}{2}S_{k-1}(\l|p).$ Substituting \eqref{c2i9} and \eqref{c2i11} into \eqref{c2i8}, we btain
\begin{equation}
\label{c2i12}
\begin{aligned}
0&\geq\frac{2\hat{f}}{|u_0|\rho}+2\frac{\varphi'}{\varphi}F^{ii}u_{ii}^2+2\frac{\varphi'}{\varphi}Du\cdot D\hat{f}-C\frac{F^{11}|Du|^2}{4u_0^2\rho^2}+\frac{\hat{f}_{11}}{u_{11}}.
\end{aligned}
\end{equation}
Combining with \eqref{CC1} and \eqref{CC2}, we get at $\hat{x}$
\begin{align*}
0&\geq\frac{2\hat{f}}{|u_0|\rho}+\frac{2\varphi'}{\varphi}F^{ii}u_{ii}^2-\frac{C\varphi'}{\varphi}r_0^{-\a0-2}\hat{f}-\frac{CF^{11}r_0^{-2}}{\rho^2}-\frac{C\hat{f}}{u_{11}r_0^2}.
\end{align*}
Since $\frac{\varphi'}{\varphi}=\beta(M-V)^{-1}\geq\beta M^{-1}\sim r_0^{2(\a0+1)}$ and when $G(\hat{x})$ is very large, we have
$u_{11}\rho(\hat{x})>Cr_0^{-\a0-2},$ which implies
$\frac{\varphi'}{\varphi}F^{ii}u_{ii}^2>\frac{CF^{11}r_0^{-2}}{\rho^2}.$ Therefore, in this case, \eqref{c2i12} becomes
\begin{equation}
\label{c2i13}
0\geq\frac{b\hat{f}}{2|u_0|\rho}+\frac{\varphi'}{\varphi}F^{ii}u_{ii}^2
-C\frac{\varphi'}{\varphi}r_0^{-\a0-2}\hat{f}-\frac{C\hat{f}}{u_{11}r_0^2}.
\end{equation}
It is easy to see that $\sum F^{ii}u_{ii}^2>c_0\hat{f}\l_1.$ Same as before assume
$u_{11}\rho(\hat{x}):=X> Br_0^{-\a0-2}$ for some sufficiently large constant $B>0$, then by \eqref{c2i13},
we get
\[r_0^{2(\a0+1)}Br_0^{-\a0-2}\leq Cr_0^{2(\a0+1)-\a0-2}+\frac{Cr_0^{-2}}{X}\rho^2.\]
When $B>0$ is very large, this leads to a contradiction. Therefore, we have proved Lemma \ref{C2-interior-lem}.
\end{proof}
\subsection{Convergence}
\label{subsection-conv} In this subsection, we will show that there exists a subsequence of $\{u_R\}$ converging to the desired solution $u$ of \eqref{k-hessian}.

In the following , for any fixed $R>R_0$ very large, let $\e_R=c_0R^{-k(\al_0+3)},$ where $c_0=c_0(\p\Omega, n, k)>0$ is a fixed small constant.
Let $u^{\e_R}_R$ be the solution of \eqref{ed1*} with $\e=\e_R,$ then we have the following lemma.
\begin{lemma}
\label{subsection conv-lem1}
For any $R>R_0$ very large, let
\[\Psi_R=|D\uer|-b_0(-\uer)^A-\theta_0\frac{|x|^2}{R^{\al_0+3}},\] where $A=\frac{\al_0+1}{\al_0},$ and
$b_0=b_0(\p\Omega, n, k),$ $\theta_0=\theta_0(\p\Omega, n, k)>0$ small such that $\Psi_R>0$ on $\p(B_R\setminus\bar\Omega).$ Then
\[\Psi_R>0\,\,\mbox{in $B_R\setminus\bar\Omega.$}\]
\end{lemma}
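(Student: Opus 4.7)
The plan is to argue by contradiction via the maximum principle for the linearized operator $\mathcal L:=S_k^{ij}(D^2\uer)\p_{ij}$, which is strictly elliptic because $\uer$ is $k$-admissible. Suppose $\Psi_R$ attains its minimum over $\bar B_R\setminus\Omega$ at an interior point $x_0\in B_R\setminus\bar\Omega$ with $\Psi_R(x_0)\le 0$. At $x_0$ one has $D\Psi_R(x_0)=0$ and $D^2\Psi_R(x_0)\ge 0$, so $\mathcal L\Psi_R(x_0)\ge 0$; to reach a contradiction it suffices to show the strict reverse inequality $\mathcal L\Psi_R(x_0)<0$.

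The computation of $\mathcal L\Psi_R$ rests on three pieces. Differentiating the equation $S_k(D^2\uer)=f_\e$ gives $S_k^{ij}\uer_{ijl}=(f_\e)_l$, which combined with the identity $|D\uer|_l=\uer_i\uer_{il}/|D\uer|$ yields
\[
\mathcal L|D\uer|=\frac1{|D\uer|}\Bigl[S_k^{ij}\uer_{il}\uer_{jl}+\<D\uer,Df_\e\>-S_k^{ij}|D\uer|_i|D\uer|_j\Bigr],
\]
together with
\[
\mathcal L((-\uer)^A)=A(A-1)(-\uer)^{A-2}S_k^{ij}\uer_i\uer_j-Ak(-\uer)^{A-1}f_\e,\qquad \mathcal L(|x|^2)=2\sum_i S_k^{ii}.
\]
The first-order condition $D\Psi_R(x_0)=0$ reads $|D\uer|_i=-b_0A(-\uer)^{A-1}\uer_i+2\theta_0 x_i/R^{\al_0+3}$, which I substitute into the last term of $\mathcal L|D\uer|$; the hypothesis $\Psi_R(x_0)\le 0$ additionally forces $|D\uer|(x_0)\le b_0(-\uer)^A+\theta_0|x_0|^2/R^{\al_0+3}$. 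After these substitutions $\mathcal L\Psi_R(x_0)$ becomes an explicit expression in $\uer,D\uer,D^2\uer,f_\e,Df_\e$ and the two parameters $b_0,\theta_0$.

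The decay estimates of Section \ref{section de} (Lemmas \ref{C0-interior-lem}--\ref{C2-interior-lem}) then fix the relative orders $|\uer|\lesssim|x|^{-\al_0}$, $|D\uer|\lesssim|x|^{-\al_0-1}$, $|D^2\uer|\lesssim|x|^{-\al_0-2}$, while the choice $\e_R=c_0R^{-k(\al_0+3)}$ makes $f_\e,Df_\e$ strictly lower order on the relevant scale. The exponent $A=(\al_0+1)/\al_0$ is chosen precisely so that $(-\uer)^A$ and $|x|^2/R^{\al_0+3}$ decay at the same rate $|x|^{-\al_0-1}$ as $|D\uer|$, making every surviving contribution to $\mathcal L\Psi_R(x_0)$ of common homogeneity $|x_0|^{-\al_0-3}$. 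The main obstacle is the positive, sign-unfavorable term $S_k^{ij}\uer_{il}\uer_{jl}/|D\uer|$, which sits at exactly the borderline order and cannot be discarded. The mechanism that tames it is that the first-order substitution produces the extra negative piece $-b_0^2A^2(-\uer)^{2A-2}S_k^{ij}\uer_i\uer_j/|D\uer|$ inside $-S_k^{ij}|D\uer|_i|D\uer|_j/|D\uer|$; this, together with the manifestly negative $-b_0A(A-1)(-\uer)^{A-2}S_k^{ij}\uer_i\uer_j$ coming from $\mathcal L((-\uer)^A)$ and with $-2\theta_0\sum_i S_k^{ii}/R^{\al_0+3}$, beats $S_k^{ij}\uer_{il}\uer_{jl}/|D\uer|$ once $b_0,\theta_0$ are taken small enough. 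The smallness of $b_0,\theta_0$ is constrained from the other side by the requirement $\Psi_R>0$ on $\p(B_R\setminus\bar\Omega)$, where the $C^1$ lower bounds on $|D\uer|$ furnished by Remark \ref{rmk3.1} on $\p\Omega$ and by Lemma \ref{C1-outside-lem} on $\p B_R$ dictate how small is acceptable. Once these tunings are consistent, $\mathcal L\Psi_R(x_0)<0$ follows, contradicting the second-order minimum inequality and completing the proof.
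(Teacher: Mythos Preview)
Your approach has a genuine gap at the crucial step. You correctly identify $S_k^{ij}\uer_{il}\uer_{jl}/|D\uer|$ as the main obstacle, but your proposed mechanism for controlling it does not work: the negative pieces you cite, namely $-b_0^2A^2(-\uer)^{2A-2}S_k^{ij}\uer_i\uer_j/|D\uer|$ and $-b_0A(A-1)(-\uer)^{A-2}S_k^{ij}\uer_i\uer_j$, are each proportional to a positive power of $b_0$, so taking $b_0$ \emph{small} makes them smaller, not larger. Likewise $-2\theta_0\sum_i S_k^{ii}/R^{\al_0+3}$ shrinks with $\theta_0$. But the boundary hypothesis forces $b_0,\theta_0$ small, so the sign-unfavorable Hessian-squared term survives with no competitor. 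Concretely, for $x_0$ near $\p\Omega$ (where $|x_0|\sim 1$ and $R$ is large) the term $S_k^{ij}\uer_{il}\uer_{jl}/|D\uer|$ is of unit size, while $\theta_0\sum_i S_k^{ii}/R^{\al_0+3}=O(R^{-(\al_0+3)})$ and the $b_0$-terms are $O(b_0)$; neither can dominate. Your invocation of the decay lemmas is also problematic: Lemma~\ref{C2-interior-lem} requires $R\gg r_0^{1+2/\al_0}$, which fails when $|x_0|$ is comparable to $R$.

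The paper sidesteps the Hessian-squared term entirely via the directional-derivative trick already used in Lemma~\ref{C1-global-lem}. At the putative interior minimum $x_0$, rotate so that $|D\uer|(x_0)=(\uer)_1(x_0)$ and replace $\Psi_R$ by $\tilde\Psi=(\uer)_1-b_0(-\uer)^A-\theta_0|x|^2/R^{\al_0+3}$. Since $(\uer)_1\le|D\uer|$ everywhere with equality at $x_0$, the point $x_0$ is also a minimum of $\tilde\Psi$. Now $\mathcal L(\uer)_1=S_k^{ij}(\uer)_{1ij}=(f_{\e_R})_1$ has no second-order nonlinearity, and after discarding the favorable term $-b_0A(A-1)(-\uer)^{A-2}S_k^{ij}\uer_i\uer_j\le 0$ one obtains
\[
S_k^{ij}(\tilde\Psi)_{ij}\le (f_{\e_R})_1+Ab_0k(-\uer)^{A-1}f_{\e_R}-\frac{2\theta_0}{R^{\al_0+3}}\sum_i S_k^{ii}.
\]
Concavity of $S_k^{1/k}$ gives $\sum_i S_k^{ii}\ge c_1 f_{\e_R}^{1-1/k}$, while the positive terms on the right are $O(f_{\e_R})$. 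The choice $\e_R=c_0R^{-k(\al_0+3)}$ with $c_0$ small forces $f_{\e_R}^{1/k}\ll \theta_0 R^{-(\al_0+3)}$, so the right-hand side is strictly negative, yielding the contradiction. No appeal to Lemmas~\ref{C0-interior-lem}--\ref{C2-interior-lem} is needed.
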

\begin{proof}
Before proving this lemma, we want to point out that by Lemma \ref{C1-inside-lower-lem}, and \ref{C1-outside-lem}, we know there always exist $b_0=b_0(\p\Omega, n, k),$ $\theta_0=\theta_0(\p\Omega, n, k)>0$ small such that $\Psi_R>0$ on $\p(B_R\setminus\bar\Omega).$

Now, we will prove this lemma by a contradiction argument. We assume $\Psi$ achinves its non-positive minimum point at $x_0.$ Then at this point we may rotate the coordinate such that
$|D\uer|=(\uer)_1.$ A straightforward calculation yields at $x_0$
\[S_k^{ij}(\Psi_R)_{ij}\leq(f_{\e_R})_1+Ab_0|\uer|^{A-1}kf_{\e_R}-2\frac{\theta_0}{R^{\al_0+3}}\sum S_k^{ii}.\]
Since $S_k^{1/k}$ is concave, we get $\sum S_k^{ii}>c_1f_{\e_R}^{1-\frac{1}{k}}$ for some $c_1=c_1(n, k)>0$ only depending on $n, k.$
By a proper choice of $c_0$ that is independent of $x_0$ and $R,$ we can see that $S_k^{ij}(\Psi_R)_{ij}<0$ at $x_0.$ This leads to a contradiction.
\end{proof}

Combining Lemma \ref{C0-interior-lem}, \ref{C1-interior-lem}, \ref{C2-interior-lem}, and \ref{subsection conv-lem1} with the standard convergence theorem we conclude
\begin{theorem}
\label{existence-theorem}
There exists a $k$-admissible solution $u$ of equation \eqref{k-hessian} satisfying
\[|u(x)|<B|x|^{-\al_0},\,\,|Du(x)|<B|x|^{-\al_0-1},\,\,\mbox{and $|D^2u(x)|<B|x|^{-\al_0-2},$}\]
for any $x\in \R^n\setminus\Omega.$
Here $B>0$ is a constant depending on $n, k, $ and $\Omega.$ Moreover, we also have
\[|Du|-b_0|u|^{\frac{\al_0+1}{\al_0}}\geq 0\,\,\mbox{in $\R^n\setminus\Omega$}\]
for some $b_0=b_0(\p\Omega)>0.$
\end{theorem}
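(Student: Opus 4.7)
The plan is to construct the solution $u$ as a limit of the approximate solutions $u_R^{\e_R}$ with $\e_R=c_0R^{-k(\al_0+3)}$ and then verify that each of the claimed estimates passes to the limit. First, by Theorem \ref{sap-thm1}, for every sufficiently large $R$ there exists a $k$-admissible solution $u_R^{\e_R}\in C^{1,1}(\bar B_R\setminus\Omega)\cap C^2(B_R\setminus\bar\Omega)$ of \eqref{ed1*} with $\e=\e_R$. The key point is that Lemmas \ref{C0-interior-lem}, \ref{C1-interior-lem}, and \ref{C2-interior-lem} give uniform (in $R$ and $\e$) pointwise bounds
\[
|u_R^{\e_R}(x)|\le B|x|^{-\al_0},\quad |Du_R^{\e_R}(x)|\le B|x|^{-\al_0-1},\quad |D^2u_R^{\e_R}(x)|\le B|x|^{-\al_0-2}
\]
on every compact subset of $\R^n\setminus\Omega$ once $R$ is large enough, and Lemma \ref{subsection conv-lem1} in addition gives the uniform lower bound $|Du_R^{\e_R}|\ge b_0(-u_R^{\e_R})^{(\al_0+1)/\al_0}+\theta_0|x|^2R^{-\al_0-3}$.

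Next I would extract a convergent subsequence. On every fixed compact annulus $\{r_1\le|x|\le r_2\}\subset\R^n\setminus\bar\Omega$, the uniform $C^2$ estimate combined with Lemma \ref{subsection conv-lem1} shows that $D^2u_R^{\e_R}$ has a uniform positive lower bound in $\Gamma_k$ (since $|Du_R^{\e_R}|$ is bounded away from zero in the annulus, and the structure of \eqref{ed1*} plus the uniform $\Gamma_k$ estimates yield ellipticity bounds independent of $R$, $\e$). By standard Evans--Krylov and Schauder theory applied to the locally uniformly elliptic equation $S_k(D^2 u_R^{\e_R})=f_{\e_R}$, one obtains uniform $C^{2,\alpha}$ bounds on each compact subset of $\R^n\setminus\bar\Omega$. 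A diagonal Arzel\`a--Ascoli argument then produces a subsequence $u_{R_j}^{\e_{R_j}}\to u$ in $C^2_{\rm loc}(\R^n\setminus\bar\Omega)$. Since $f_{\e_R}\to 0$ as $R\to\infty$, the limit satisfies $S_k(D^2u)=0$ in $\R^n\setminus\bar\Omega$, and $u$ remains $k$-admissible because $\Gamma_k$ is closed.

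To verify the boundary condition $u=-1$ on $\p\Omega$ and the decay $u\to 0$ at infinity, I would use the two-sided barrier $\phi(x,C_1)<u_R^{\e_R}<\phi(x,C_0)$ from Lemma \ref{C0-lem}, which passes to the limit and forces both $u|_{\p\Omega}=-1$ and $u(x)\to 0$ as $|x|\to\infty$. The uniform decay bounds for $u$, $Du$, $D^2u$ follow directly from the same bounds on $u_R^{\e_R}$ since pointwise limits preserve inequalities. The final lower bound
\[
|Du|-b_0|u|^{(\al_0+1)/\al_0}\ge 0\quad\mbox{in }\R^n\setminus\Omega
\]
comes from passing the inequality of Lemma \ref{subsection conv-lem1} to the limit: for any fixed $x\in\R^n\setminus\Omega$, the term $\theta_0|x|^2/R^{\al_0+3}\to 0$ as $R\to\infty$, so the limiting inequality is exactly as claimed.

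The main technical obstacle I anticipate is establishing the local $C^{2,\alpha}$ bounds needed for $C^2_{\rm loc}$ convergence, because the equation $S_k(D^2u_R^{\e_R})=f_{\e_R}$ is only uniformly elliptic on a neighborhood of the solution in the $k$-admissible cone. The lower bound $|Du_R^{\e_R}|\ge b_0(-u_R^{\e_R})^{(\al_0+1)/\al_0}$ from Lemma \ref{subsection conv-lem1} is precisely what prevents degeneration on any compact subset of $\R^n\setminus\bar\Omega$, because it forces $|Du|$ (hence, combined with the equation and the $\Gamma_k$ constraint, the minimum eigenvalue of $D^2u$ in the relevant direction) to stay strictly away from zero. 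This is the quantitative ingredient that converts the soft $C^2$ compactness into the genuine ellipticity required by Evans--Krylov, and it is the reason the specific choice $\e_R=c_0R^{-k(\al_0+3)}$ is made.
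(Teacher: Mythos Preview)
Your overall architecture---take the sequence $u_R^{\e_R}$, use the uniform decay lemmas, extract a subsequential limit, and pass the pointwise bounds and Lemma \ref{subsection conv-lem1} to the limit---is exactly what the paper does, and the handling of the last inequality (letting $\theta_0|x|^2R^{-\al_0-3}\to 0$) is correct.

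The gap is in your compactness step. You claim that the gradient lower bound from Lemma \ref{subsection conv-lem1} gives uniform ellipticity and hence Evans--Krylov $C^{2,\alpha}$ estimates. This is not right: the ellipticity constants of $S_k(D^2 v)=f$ are controlled by $S_k^{ij}(D^2 v)$, which depend only on the Hessian and on the lower bound for $f$, not on $|Dv|$. Here $f_{\e_R}(x)\sim \e_R|x|^{-(n+1)}\to 0$ on every compact set as $R\to\infty$, so the equations genuinely degenerate and Evans--Krylov cannot give $C^{2,\alpha}$ bounds uniform in $R$. Indeed the limit $u$ lies only in $C^{1,1}$ (this is the regularity asserted in Theorem \ref{int-thm2}), and its Hessian eigenvalues sit on $\partial\Gamma_k$, so no compact subset of $\Gamma_k$ can trap $\lambda[D^2 u_R^{\e_R}]$ uniformly.

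The fix is simpler than what you wrote: the uniform $C^2$ decay bounds from Lemmas \ref{C0-interior-lem}--\ref{C2-interior-lem} already give, by Arzel\`a--Ascoli, a subsequence converging in $C^{1,\beta}_{\rm loc}$ for every $\beta<1$ to some $u\in C^{1,1}_{\rm loc}$ with $|D^2 u|\le B|x|^{-\al_0-2}$ a.e. The equation $S_k(D^2 u)=0$ passes to the limit by stability of viscosity solutions (or by weak continuity of the $k$-Hessian measure) since $f_{\e_R}\to 0$ locally uniformly; this is the ``standard convergence theorem'' the paper invokes. The gradient lower bound of Lemma \ref{subsection conv-lem1} is not used for compactness at all---its role is to guarantee that every level set $\{u=s\}$ is regular (the second assertion of Theorem \ref{int-thm2}) and to make the monotone quantity in Section \ref{section mq} well defined.
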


\section{Asymptotic behavior at $\infty$}
\label{section ab}
\label{asymp}
In this section, we will consider the asymptotic behavior of the solution $u$ of equation \eqref{k-hessian}.
By using the maximum principle, we obtain the lemma below.
\begin{lemma}
\label{ab-max-lem}
Let $G$ be a connected, bounded, open subset in $\mathbb{R}^n\setminus\{0\}$ and $\mu=-|x|^{2-\frac{n}{k}}.$
Assume $u$ is a $k$-admissible solution satisfies $S_k(\n^2 u)=0$. If $\frac{u}{\mu}$ achieves its maximum (minimum) in $G$, then
$\frac{u}{\mu}$ is a constant.
\end{lemma}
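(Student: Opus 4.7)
The plan is to reduce the statement to the classical strong maximum principle for a linear uniformly elliptic operator obtained by linearizing $S_k(D^2\cdot)=0$ along a segment in the space of symmetric matrices. Setting $c:=\max_G(u/\mu)$ and assuming it is attained at an interior point $x_0\in G$, the bound $u/\mu\le c$ combined with $\mu<0$ rewrites as $u\ge c\mu$, so $w:=u-c\mu$ is nonnegative in $G$ and vanishes at $x_0$. Since $S_k$ is $k$-homogeneous and $\mu$ is the rotationally symmetric solution of $S_k(D^2\mu)=0$, also $S_k(D^2(c\mu))=c^k S_k(D^2\mu)=0$. Applying the fundamental theorem of calculus to the segment $A_t:=tD^2u+(1-t)cD^2\mu$ in symmetric matrices then yields
\[
0 \;=\; S_k(D^2 u)-S_k(cD^2\mu) \;=\; a^{ij}(x)\,w_{ij}, \qquad a^{ij}(x) \;:=\; \int_0^1 S_k^{ij}(A_t(x))\,dt.
\]

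Next I would verify that $L:=a^{ij}\partial_{ij}$ is uniformly elliptic on compact subsets of $G$. In the relevant regime $c>0$ (the case of interest in the paper, since $u<0$), the eigenvalues of $cD^2\mu(x)$ are a positive multiple of $(1-n/k,1,\ldots,1)$, which lies in $\partial\Gamma_k\subset\bar{\Gamma}_k$; together with $\lambda[D^2u]\in\bar{\Gamma}_k$ and convexity of $\bar{\Gamma}_k$, this places the eigenvalues of $A_t(x)$ in $\bar{\Gamma}_k$ for every $t\in[0,1]$, so each $S_k^{ij}(A_t(x))$ is positive semidefinite. To upgrade to strict positivity, I would compute $S_k^{ij}(cD^2\mu(x))$ directly in the eigenframe of $D^2\mu$: its diagonal entries are proportional to $S_{k-1}(\lambda|i)$ for the profile $\lambda=(1-n/k,1,\ldots,1)$, and a short computation gives $S_{k-1}(\lambda|i)>0$ for every $i$. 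By continuity of $S_k^{ij}$ in $t$ and compactness, $S_k^{ij}(A_t)$ is uniformly strictly positive definite for $t$ in a neighborhood of $0$, which is enough to conclude that $a^{ij}$ is uniformly positive definite on any compact subset of $G$.

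Finally, since $a^{ij}w_{ij}=0$ with $w\ge 0$ in the connected open set $G$ and $w(x_0)=0$ at an interior point, the classical strong maximum principle applied to $-w$ forces $w\equiv 0$ on $G$, i.e., $u/\mu\equiv c$. The minimum case is handled analogously. The main obstacle I expect is precisely the ellipticity step: both $D^2u$ and $cD^2\mu$ sit on the singular locus $\{S_k=0\}$ of the cone $\bar{\Gamma}_k$, so the linearization can a priori be degenerate at both endpoints of the segment. The decisive observation is the explicit endpoint nondegeneracy $S_{k-1}(\lambda|i)>0$ for the very specific eigenvalue profile of $D^2\mu$, which—combined with the convexity of $\bar{\Gamma}_k$ and continuity along $A_t$—yields strict ellipticity of the averaged operator $a^{ij}$ and makes the strong maximum principle available.
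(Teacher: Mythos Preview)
Your linearization-plus-strong-maximum-principle argument is correct and is precisely the standard elaboration of what the paper indicates with the single line ``By using the maximum principle, we obtain the lemma below''; no further proof is given there. The explicit check that $S_{k-1}(\lambda\mid i)>0$ for the eigenvalue profile $\lambda=(1-\tfrac{n}{k},1,\dots,1)$ of $D^2\mu$ is the decisive nondegeneracy point, and your restriction to $c>0$ is harmless since in the paper's only use of the lemma (the asymptotic-behavior lemma immediately following) one has $C_0\le u/\mu\le C_1$ with $C_0>0$, so the interior extremum value is automatically positive.
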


\begin{lemma}
\label{asymp-lem}
Let $u$ be a $k$-admissible solution of \eqref{k-hessian}. Then there exists some constant $C_0\leq\gamma\leq C_1$ such that
$u\goto\gamma\mu$ in $C^2$ topology as $|x|\goto\infty.$
\end{lemma}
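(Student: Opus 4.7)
My plan is a blow-down rescaling combined with the strong maximum principle of Lemma~\ref{ab-max-lem}. For each $r > 0$ define the rescaled function
\[
u_r(x) := r^{\alpha_0} u(rx), \qquad \alpha_0 = n/k - 2.
\]
By homogeneity, $S_k(D^2 u_r) = 0$ on $\R^n \setminus r^{-1}\bar\Omega$ and $u_r(x)/\mu(x) = u(rx)/\mu(rx)$ since $\mu(rx) = r^{-\alpha_0}\mu(x)$. The estimates of Theorem~\ref{existence-theorem} yield uniform bounds $|D^j u_r(x)| \leq B|x|^{-\alpha_0 - j}$ for $j = 0, 1, 2$ on compact subsets of $\R^n \setminus \{0\}$, independent of $r$. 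Moreover, letting $\epsilon \to 0$ in the global barriers of Subsection~\ref{gb} gives the two-sided pinching $C_0 \leq u/\mu \leq C_1$ on $\R^n \setminus \Omega$.

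Define
\[
\gamma^+(r) := \max_{|x|=r} \frac{u}{\mu}, \qquad \gamma^-(r) := \min_{|x|=r} \frac{u}{\mu},
\]
which are continuous on $(r_*, \infty)$ where $r_* := \max_{\partial\Omega}|x|$. Applying Lemma~\ref{ab-max-lem} to any annulus $G = \{r_1 < |x| < r_2\}$ forces $\max_{\bar G} u/\mu = \max(\gamma^+(r_1), \gamma^+(r_2))$; in particular $\max_{[r_1, r_2]} \gamma^+ = \max(\gamma^+(r_1), \gamma^+(r_2))$, so $\gamma^+$ admits no strict interior local maximum. Dually, $\gamma^-$ admits no strict interior local minimum. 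A short interlacing argument (if $\limsup \gamma^+ > \liminf \gamma^+$, continuity and IVT furnish radii $r < r' < r''$ with $\gamma^+(r')$ strictly larger than both $\gamma^+(r)$ and $\gamma^+(r'')$, contradicting the endpoint-max property) produces the limits $\bar\gamma := \lim_{r\to\infty}\gamma^+(r)$ and $\underline\gamma := \lim_{r\to\infty}\gamma^-(r)$, with $C_0 \leq \underline\gamma \leq \bar\gamma \leq C_1$.

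Now take any $r_j \to \infty$. By the uniform $C^2$ bounds, Arzela-Ascoli, and a diagonal procedure, a subsequence of $\{u_{r_j}\}$ converges in $C^2_{\mathrm{loc}}(\R^n \setminus \{0\})$ to some $k$-admissible $u_\infty$ satisfying $S_k(D^2 u_\infty) = 0$ on $\R^n \setminus \{0\}$. Passing to the limit in the identity $u_{r_j}(x)/\mu(x) = u(r_j x)/\mu(r_j x)$ shows $\max_{|x|=s} u_\infty/\mu = \bar\gamma$ and $\min_{|x|=s} u_\infty/\mu = \underline\gamma$ for every $s > 0$. Since $\bar\gamma$ is attained at an interior point of, say, $\{1/2 < |x| < 2\}$, Lemma~\ref{ab-max-lem} forces $u_\infty/\mu \equiv \bar\gamma$ there; applied to the minimum it forces $u_\infty/\mu \equiv \underline\gamma$. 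Hence $\bar\gamma = \underline\gamma =: \gamma \in [C_0, C_1]$ and $u_\infty = \gamma\mu$. Since every subsequential limit equals the same $\gamma\mu$, the full rescaled family satisfies $u_r \to \gamma\mu$ in $C^2_{\mathrm{loc}}(\R^n \setminus \{0\})$ as $r \to \infty$, which unwinds to $u \to \gamma\mu$ in the $C^2$ topology as $|x| \to \infty$.

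The main obstacle is the existence of $\lim_{r\to\infty}\gamma^\pm(r)$: the no-strict-interior-extremum property handed down from Lemma~\ref{ab-max-lem} is weaker than monotonicity, and honest oscillation must be ruled out. The interlacing argument sketched above should handle this cleanly; if it proves delicate, an alternative is to realize $\limsup \gamma^+$ and $\liminf \gamma^-$ along separate blow-down sequences, each producing a limit of the form $c\mu$ by Lemma~\ref{ab-max-lem}, and then derive $\limsup = \liminf$ directly from the endpoint-max property on annuli bounded by radii chosen alternately from the two sequences.
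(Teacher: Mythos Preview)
Your approach is correct and essentially the same as the paper's (which follows Kichenassamy--V\'eron): rescale by $u_r(\xi)=r^{\alpha_0}u(r\xi)$, use the decay estimates of Theorem~\ref{existence-theorem} to extract a subsequential limit on $\R^n\setminus\{0\}$, and invoke Lemma~\ref{ab-max-lem} to identify that limit as $\gamma\mu$. The only organizational difference concerns the point you flag as the ``main obstacle,'' namely the existence of $\lim_{r\to\infty}\gamma^{\pm}(r)$: rather than your interlacing argument on the sphere-maxima $\gamma^+(r)=\max_{|x|=r}u/\mu$, the paper works with the running supremum $\tilde\gamma(r)=\sup_{1\le|x|\le r}u/\mu$, which is trivially nondecreasing and hence convergent; combined with your own endpoint-max observation this gives $\tilde\gamma(r)=\max(\gamma^+(1),\gamma^+(r))$, so $\gamma^+(r)$ is in fact eventually monotone and no interlacing is needed.
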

\begin{proof}This proof follows the idea of \cite{KV}.
Without loss of generality, in the following, we assume $\p\Omega\subset B_1(0).$ By Lemma
 \ref{C0-lem}, we can define $\gamma^+$ and $\gamma^-$ by
\[\gamma^+=\limsup\limits_{x\goto\infty}\frac{u(x)}{\mu(x)},\,\,
\gamma^-=\liminf\limits_{x\goto\infty}\frac{u(x)}{\mu(x)}.\]
It is clear that $C_0\leq \gamma^+,\gamma^-\leq C_1$. We can also assume $\max\limits_{x\in\mathbb{S}^{n-1}}|u(x)|=1.$ Otherwise, we use  $\frac{u(x)}{\max\limits_{x\in\mathbb{S}^{n-1}}u(x)}$ to replace $u(x)$.
If $\gamma^+=\gamma^-=1$ we would have $\lim\limits_{x\goto\infty}\frac{u(x)}{\mu(x)}=1.$
Thus we will assume $\gamma^+>1$ (or similarly $\gamma^-<1$).
Let $\tilde{\gamma}(r)=\sup\limits_{1\leq|x|\leq r}\frac{u(x)}{\mu(x)},$ then it's easy to see that $\tilde{\gamma}(r)\geq 1$ is nondecreasing.
Therefore we have $\lim\limits_{r\goto\infty}\tilde{\gamma}(r)=\gamma^+.$

Define $u_r$ on $\Lambda_r=\{\xi\mid|\xi|>\frac{1}{r}\}$ by $u_r(\xi)=-\frac{u(r\xi)}{\mu(r)}.$ By Theorem \ref{existence-theorem} we have
\[|u_r(\xi)|\leq C|\xi|^{-\a0},\]
\[|D u_r(\xi)|\leq C|\xi|^{-\a0-1},\]
and
\[|D^2u_r(\xi)|\leq C|\xi|^{-\a0-2}.\]
This implies there exists a function $v$ satisfying $S_k(D^2 v)=0$ such that
$u_r(\xi)\goto v(\xi)$ in $C^2$ topology on any compact subset of $\mathbb{R}^n\setminus\{0\}.$ Moreover, we have
\[\frac{u_r(\xi)}{\mu(\xi)}=-\frac{u(r\xi)}{\mu(r\xi)}\cdot\frac{\mu(r\xi)}{\mu(\xi)\mu(r)}
=\frac{u(r\xi)}{\mu(r\xi)}.\]
Suppose $\tilde{\gamma}(r)=\frac{u(x_r)}{\mu(x_r)},$ by the monotonicity of $\tilde{\gamma},$
we know $|x_r|=r.$ Let $\xi_r=\frac{x_r}{r},$ then we have $\frac{u_r(\xi_r)}{\mu(\xi_r)}=\tilde{\gamma}(r).$
Choose a subsequence of $\{\xi_r\},$ denote by $\{\xi_{r_n}\},$ and assume $\{\xi_{r_n}\}\goto\xi_0.$ Then we have
\[\frac{v(\xi_0)}{\mu(\xi_0)}=\gamma^+\,\,\mbox{and $\frac{v(\xi)}{\mu(\xi)}\leq\gamma^+$ for any $\xi\in\mathbb{R}^n\setminus\{0\}.$}\]
By Lemma \ref{ab-max-lem} we know $v(\xi)=\gamma^+\mu(\xi)=\lim\limits_{r\goto\infty}u_r(\xi)$ uniformly on every compact subset of
$\mathbb{R}^n\setminus\{0\}$ in $C^2$ topology.
This gives $\lim\limits_{|x|\goto\infty}\frac{u(x)}{\mu(x)}=\gamma$ and Lemma \ref{asymp-lem} is proved.
\end{proof}

\section{The monotonicity quantity}
\label{section mq}
In this section, we will derive a monotonicity quantity satisfied by the solution $u$ of \eqref{k-hessian}.

To start, let us consider the radial function $$\mu=-|x|^{2-\frac{n}{k}}.$$
One can verify that for $n>2k$, $\lambda[\n^2\mu]$ belongs to $\bar \Gamma_k^+$ and $\mu$ satisfies the Hessian equation
$$S_k(\n^2 u)=0.$$
Here $\lambda[\n^2\mu]=(\lambda_1, \cdots, \lambda_n)$ are the eigenvalues of the Hessian of $\mu.$
In polar coordinates, a straightforward calculation gives  $$\n_r \mu= (\frac{n}{k}-2)|x|^{1-\frac{n}{k}}, \quad \n_\theta \mu=0,$$
$$\quad  \n^2 \mu(\p_r, \p_r)=-(\frac{n}{k}-2)(\frac{n}{k}-1)|x|^{-\frac{n}{k}}, \quad \n^2 \mu(\p_r, \p_\theta)=0,\quad g^{\theta\theta}\n^2 \mu(\p_\theta, \p_\theta)= (\frac{n}{k}-2)|x|^{-\frac{n}{k}},$$
$$\Delta \mu= (\frac{n}{k}-2)(n-\frac{n}{k})|x|^{-\frac{n}{k}}.$$
One sees on a level set of $\mu$,
\be\label{mq1.1*}\frac{1}{|\n \mu|}S_k^{ij}\mu_i\mu_j=S_k^{rr}\mu_r=\binom{n-1}{k-1}  (\frac{n}{k}-2)^{k} |x|^{1-n}.\ee
\be\label{mq1.2*}\frac{|\n \mu|}{(-\mu)^{\frac{1-\frac{n}{k}}{2-\frac{n}{k}}}}=\frac{n}{k}-2.\ee
Thus the quantity $\Phi: (-\infty, -1]\to \rr$,
\be\label{def-phi}
\Phi(\tau)=\int_{\{u=1/\tau\}}  \frac{1}{|\n u|}S_k^{ij}u_iu_j\left(\frac{|\n u|}{(-u)^{\frac{k-n}{2k-n}}}\right)^{\beta}  dx
 \ee
 is a constant independent of $\tau$ for the radial solution $\mu$ with any constant $\beta\in\R$.

We show in the following that for the $k$-admissible solution $u$ of \eqref{k-hessian}, when $\beta\ge \frac{n-2k}{n-k},$ $\Phi$ is monotone.
\begin{proposition}
\label{monotone-quantity}
Let $n>2k$ and $\beta\ge \frac{n-2k}{n-k}$. Let $u$ be a $k$-admissible solution of
\begin{equation}\label{eqn:1.0}
\left\{
\begin{aligned}
&S_k(\n^2 u)=0\ \ {\rm in}\ \ \mathbb R^n\setminus\bar{\Omega}\\
&u=-1\ \ {\rm on} \ \ \partial\Omega\\
&u(x)\rightarrow 0\ \ {\rm as}\ \ |x|\rightarrow\infty.
\end{aligned}\right.
\end{equation}
 Then $\Phi(\tau)$ defined by \eqref{def-phi} is monotone non-decreasing for regular value $\frac{1}{\tau}$, i.e., for $\tau$ such that $\{u=\frac{1}{\tau}\}$ is a regular level set.
\end{proposition}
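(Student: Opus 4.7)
My plan is to realize $\Phi(\tau)$ as a flux through $\{u=1/\tau\}$ of a weighted vector field built from the Newton tensor, then use that this vector field is divergence-free (thanks to $S_{k}(D^{2}u)=0$) to convert monotonicity in $\tau$ into a one-sided bulk inequality. Set $p:=\tfrac{n-k}{n-2k}$ and $W:=\bigl(|\n u|/(-u)^{p}\bigr)^{\beta}$ on $\R^{n}\setminus\overline{\Omega}$, and introduce
$$V^{i}:=S_{k}^{ij}(D^{2}u)\,u_{j},\qquad Y:=WV.$$
On a regular level set $\{u=1/\tau\}$, the outward unit normal pointing toward $\infty$ is $\nu=\n u/|\n u|$, so $Y\cdot\nu=\tfrac{W}{|\n u|}S_{k}^{ij}u_{i}u_{j}$, which matches the integrand of \eqref{def-phi}; hence $\Phi(\tau)=\int_{\{u=1/\tau\}}Y\cdot\nu\,d\sigma$.

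The first key ingredient is that $V$ is divergence-free. The classical Newton-tensor identity $\partial_{i}S_{k}^{ij}\equiv0$ (an expansion-of-generalized-Kronecker-deltas argument using the symmetry of $u_{pqr}$) combined with $S_{k}^{ij}u_{ij}=kS_{k}(D^{2}u)=0$ gives $\partial_{i}V^{i}=0$. Consequently $\operatorname{div}Y=V\cdot\n W$, and applying the divergence theorem on the annular region $\Sigma:=\{1/\tau_{2}\le u\le1/\tau_{1}\}$ for regular values $\tau_{1}<\tau_{2}$ in $(-\infty,-1]$ yields
$$\Phi(\tau_{1})-\Phi(\tau_{2})=\int_{\Sigma}V\cdot\n W\,dx,$$
reducing monotone non-decrease of $\Phi$ in $\tau$ to the pointwise bulk inequality $V\cdot\n W\le0$.

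A direct computation gives
$$V\cdot\n W=\beta W\left[\frac{S_{k}^{ij}u_{j}\,u_{m}u_{im}}{|\n u|^{2}}+\frac{p\,S_{k}^{ij}u_{i}u_{j}}{-u}\right],$$
and diagonalising $D^{2}u$ with eigenvalues $\lambda_{1},\ldots,\lambda_{n}$ in an orthonormal basis and writing $\n u=\sum_{i}c_{i}e_{i}$, this rewrites as
$$V\cdot\n W=\frac{\beta W}{|\n u|^{2}}\sum_{i}c_{i}^{2}\,\sigma_{k-1}(\lambda|i)\bigl(\lambda_{i}+Q\bigr),\qquad Q:=\frac{p|\n u|^{2}}{-u}.$$
The identity $\sum_{i}\lambda_{i}\sigma_{k-1}(\lambda|i)=kS_{k}(\lambda)=0$ is saturated by the radial reference $\mu=-|x|^{2-n/k}$, where $\lambda_{\mathrm{rad}}+Q=0$ in the radial direction and the tangential $c_{\alpha}$ vanish, giving $V\cdot\n W\equiv0$ and constant $\Phi$. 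For general $u$ the required sign should be extracted by combining the cone constraint $S_{k}(\lambda)=0$ with $\lambda\in\overline{\Gamma}_{k}$, the a priori gradient lower bound $|\n u|\ge b_{0}(-u)^{p}$ from Theorem~\ref{existence-theorem}, and a weighted Newton--Maclaurin inequality on $\overline{\Gamma}_{k}$; the hypothesis $\beta\ge\frac{n-2k}{n-k}$, equivalently $\beta p\ge1$, enters precisely at the balance between the shift $Q$ and the eigenvalue spread.

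The main obstacle will be the sign analysis. Identifying the divergence-free structure and applying the divergence theorem is routine once $Y=WV$ is in hand, but turning the algebraic expression for $V\cdot\n W$ into a provably non-positive quantity requires a careful interplay between the Gårding-cone constraint $S_{k}(\lambda)=0$, the Newton--Maclaurin inequalities on $\overline{\Gamma}_{k}$, and the gradient lower bound, with the hypothesis on $\beta$ entering in a delicate way.
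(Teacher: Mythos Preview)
Your reduction to the flux identity $\Phi(\tau_{1})-\Phi(\tau_{2})=\int_{\Sigma}V\cdot\nabla W$, via $\partial_{i}S_{k}^{ij}=0$ and $S_{k}^{ij}u_{ij}=kS_{k}(D^{2}u)=0$, is correct and is exactly the paper's first step. The gap is the sign analysis: the pointwise inequality $V\cdot\nabla W\le 0$ is \emph{false} for general solutions of $S_{k}(D^{2}u)=0$, so the argument cannot close at first order. There is an immediate red flag in your own formula
\[
V\cdot\nabla W=\frac{\beta W}{|\nabla u|^{2}}\sum_{i}c_{i}^{2}\,\sigma_{k-1}(\lambda|i)(\lambda_{i}+Q):
\]
here $\beta>0$ enters only as a positive prefactor, so the sign is independent of $\beta$ and the hypothesis $\beta\ge\frac{n-2k}{n-k}$ cannot possibly be what makes it work, contrary to your heuristic. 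Moreover $\sum_{i}\sigma_{k-1}(\lambda|i)(\lambda_{i}+Q)=Q(n-k+1)S_{k-1}(\lambda)\ge 0$, while the weights $c_{i}^{2}=u_{i}^{2}$ are not forced by the scalar equation to align with the negative-$(\lambda_{i}+Q)$ directions; already for $k=1$, $n=3$, a quadrupole perturbation $v=Y_{2}(\theta)r^{-3}$ of the radial solution produces points with $V\cdot\nabla W>0$. The gradient lower bound $|\nabla u|\ge b_{0}(-u)^{p}$ only bounds $W$ below and says nothing about the sign of $\nabla W$.

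The paper's remedy is a \emph{second} integration by parts. From the co-area formula $\Phi'(\tau)=-\int_{\{u=1/\tau\}}u^{2}\,(V\cdot\nabla W)/|\nabla u|$, one applies the divergence theorem on $\{u>1/\tau\}$ to the field $Z^{i}=u^{2}S_{k}^{ij}\partial_{j}W$ (the factor $u^{2}$ kills the boundary at infinity), obtaining
\[
\Phi'(\tau)=\int_{\{u>1/\tau\}}\Bigl(u^{2}S_{k}^{ij}\partial_{i}\partial_{j}W+2u\,S_{k}^{ij}u_{i}\partial_{j}W\Bigr).
\]
With $l=p=\frac{n-k}{n-2k}$ this second-order integrand regroups as
\[
\beta(-u)^{2-l\beta}|\nabla u|^{\beta-2}\Bigl[S_{k}^{ij}u_{im}u_{jm}-\tfrac{n}{n-k}S_{k}^{ij}\partial_{i}|\nabla u|\,\partial_{j}|\nabla u|\Bigr]
+\tfrac{\beta(l\beta-1)}{l}(-u)^{2-l\beta}|\nabla u|^{\beta-2}S_{k}^{ij}\xi_{i}\xi_{j},
\]
where $\xi_{i}=\partial_{i}|\nabla u|-l\,|\nabla u|\,u_{i}/u$. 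The first bracket is $\ge 0$ by a Kato-type inequality for $k$-Hessian solutions (Proposition~\ref{Kato}, proved via Newton--Maclaurin in a frame adapted to $\nabla u$), and the second is an $S_{k}^{ij}$-quadratic form with coefficient $l\beta-1\ge 0$ exactly when $\beta\ge\frac{n-2k}{n-k}$. That is where the hypothesis on $\beta$ genuinely enters; you are missing both the second integration by parts and the Kato inequality.
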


\begin{proof}
In this proof, we denote $l=\frac{n-k}{n-2k}>0$ and we also note that $\p_j(S_k^{ij}u_i)=0$. Using the Divergence theorem and the co-area formula we have
\begin{eqnarray*}
&&\Phi(\tau)-\Phi(-\infty)\nonumber\\
&=&\int_{\{u=1/\tau\}}  \frac{1}{|\n u|}S_k^{ij}u_iu_j\left(\frac{|\n u|}{(-u)^{\frac{k-n}{2k-n}}}\right)^{\beta}  dx-\int_{\{u=0\}}  \frac{1}{|\n u|}S_k^{ij}u_iu_j\left(\frac{|\n u|}{(-u)^{\frac{k-n}{2k-n}}}\right)^{\beta}  dx \nonumber\\
&=&-\int_{\{u>1/\tau\}}\p_j\left[(-u)^{-l\beta} |\n u|^\beta S_k^{ij}u_i\right]\nonumber\\
&=&- \int_{\{u>1/\tau\}}\p_j\left[(-u)^{-l\beta} |\n u|^\beta\right] S_k^{ij}u_i \nonumber\\
&=&- \int_{1/\tau}^0\int_{\{u=s\}}\p_j\left[(-u)^{-l\beta} |\n u|^\beta\right] S_k^{ij}u_i\frac{1}{|\n u|}.
\end{eqnarray*}
Taking the derivative of $\Phi$, we get
\begin{eqnarray}
\Phi'(\tau)&=&-\int_{\{u=1/\tau\}}u^2\p_j\left[(-u)^{-l\beta} |\n u|^\beta \right] S_k^{ij}u_i\frac{1}{|\n u|}\nonumber\\
&=&\int_{\{u>1/\tau\}}\p_i\left\{u^2\p_j\left[(-u)^{-l\beta} |\n u|^\beta \right] S_k^{ij}\right\}\nonumber\\
&=&\int_{\{u>1/\tau\}}u^2 S_k^{ij}\p_i\p_j\left[(-u)^{-l\beta} |\n u|^\beta \right]+2u\p_j\left[(-u)^{-l\beta} |\n u|^\beta \right] S_k^{ij}u_i,\nonumber\\
\label{eq-x1}
\end{eqnarray}
where we have used $\p_jS_k^{ij}=0.$
Since $S_k(\n^2 u)=0$, we have
\begin{eqnarray*}
S_k^{ij}u_{ijk}=0,
\end{eqnarray*}
which yields
\begin{eqnarray*}
S_k^{ij}(|\n u|^\beta)_{ij}=\beta|\n u|^{\beta-2}S_k^{ij}u_{ik}u_{jk}+\beta (\beta-2)|\n u|^{\beta-2}S_k^{ij}\p_i |\n u|\p_j |\n u|.
\end{eqnarray*}
Therefore, we deduce
\begin{eqnarray*}
&&u^2S_k^{ij}\p_i\p_j\lt[(-u)^{-l\beta} |\n u|^\beta \rt]+2u\p_j\lt[(-u)^{-l\beta} |\n u|^\beta \rt] S_k^{ij}u_i\nonumber\\
&=&(-l\beta)(-l\beta-1)(-u)^{-l\beta}S_k^{ij}u_iu_j |\n u|^\beta+ 2(l\beta)\beta (-u)^{-l\beta+1}S_k^{ij}u_i |\n u|^{\beta-1}\p_j|\n u|\nonumber\\
&&+ \beta (-u)^{-l\beta+2}|\n u|^{\beta-2}\Big[S_k^{ij}u_{ik}u_{jk}+(\beta-2)S_k^{ij}\p_i|\n u|\p_j|\n u|\Big]\nonumber\\
&&-2\beta(-u)^{-l\beta+1} |\n u|^{\beta-1}S_k^{ij}u_i \p_j|\n u|-2l\beta (-u)^{-l\beta}S_k^{ij}u_iu_j |\n u|^{\beta}\nonumber\\
&=&\beta (-u)^{-l\beta+2}|\n u|^{\beta-2}\lt\{S_k^{ij}u_{ik}u_{jk}+\left[(\frac{1}{l}-2)+\frac{1}{l}(l\beta-1)\right]S_k^{ij}\p_i|\n u|\p_j|\n u|\rt\}\nonumber\\
&&+2\beta (l\beta-1)  (-u)^{-l\beta+1} |\n u|^{\beta-1} S_k^{ij}u_i \p_j|\n u| +l\beta(l\beta-1)(-u)^{-l\beta}|\n u|^\beta S_k^{ij}u_iu_j\nonumber\\
&=&\beta (-u)^{-l\beta+2}|\n u|^{\beta-2}\left[S_k^{ij}u_{ik}u_{jk}+(\frac{1}{l}-2)S_k^{ij}\p_i|\n u|\p_j|\n u|\right]\nonumber\\
&&+ \frac{1}{l}\beta (l\beta-1)(-u)^{-l\beta+2}|\n u|^{\beta-2} S_k^{ij}\left(\p_i|\n u| -l \frac{|\n u|}{u}u_i\right)\left(\p_j|\n u| -l \frac{|\n u|}{u}u_j\right).
\end{eqnarray*}
By our choice of $l$ we can see that $\frac{1}{l}-2=-\frac{n}{n-k}$ and $\beta\ge \frac{1}{l}$, applying the Kato's inequality (see Proposition \ref{Kato}), we obtain that
\begin{eqnarray*}
&&u^2S_k^{ij}\p_i\p_j\lt[(-u)^{-l\beta} |\n u|^\beta \rt]+2u\p_j\lt[(-u)^{-l\beta} |\n u|^\beta \rt] S_k^{ij}u_i\ge 0.
\end{eqnarray*}
In view of \eqref{eq-x1}, we have shown that
$\Phi'(\tau)\ge 0$. This completes the proof of Proposition \ref{monotone-quantity}.
\end{proof}
In view of Proposition \ref{monotone-quantity} we obtain
\begin{corollary}
\label{cor monotone-quantity}
Let $n>2k$ and $\beta\ge \frac{n-2k}{n-k}$. Let $u$ be a $k$-admissible solution of \eqref{k-hessian}
constructed in Subsection \ref{subsection-conv}.
 Then $\Phi(\tau)$ defined by \eqref{def-phi} satisfies
 \[\Phi(-1)\geq\Phi(-\infty).\]
\end{corollary}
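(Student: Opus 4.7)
The plan is to deduce Corollary~\ref{cor monotone-quantity} by combining the differential monotonicity of Proposition~\ref{monotone-quantity} with the regularity of the intermediate level sets and the asymptotic convergence $u\to\gamma\mu$ at infinity established in Lemma~\ref{asymp-lem}.

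First I would verify the hypothesis of Proposition~\ref{monotone-quantity} throughout the interval $\tau\in(-\infty,-1]$. Values of $\tau$ in this range correspond to $1/\tau\in[-1,0)$, and Theorem~\ref{int-thm2} guarantees that every level set $\{u=s\}$ with $s\in[-1,0)$ is regular; the boundary case $s=-1$ is $\partial\Omega$, which is smooth with $|\nabla u|>0$ by Remark~\ref{rmk3.1}. Hence Proposition~\ref{monotone-quantity} applies on the entire interval and yields $\Phi(\tau)\le\Phi(-1)$ for every $\tau\in(-\infty,-1]$.

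Next I would show that $\lim_{\tau\to-\infty}\Phi(\tau)$ exists and equals the quantity $\Phi(-\infty)$ produced by the limiting radial profile $\gamma\mu$. The surface $\{u=1/\tau\}$ escapes to infinity; after rescaling $y=x/r_\tau$ with $r_\tau=(-\gamma\tau)^{k/(n-2k)}$, Lemma~\ref{asymp-lem} together with Theorem~\ref{existence-theorem} yields $C^2$ convergence of the rescaled surface to the unit sphere and uniform convergence of the integrand of $\Phi$ to the corresponding expression for $\gamma\mu$. Since that expression is constant along any radial profile (by the identities \eqref{mq1.1*}--\eqref{mq1.2*}), the limit exists and a direct computation gives
\[
\Phi(-\infty)=|\mathbb{S}^{n-1}|\binom{n-1}{k-1}\left(\frac{n-2k}{k}\right)^{k+\beta}\gamma^{k-\beta k/(n-2k)}.
\]
Combining with the first step yields $\Phi(-\infty)\le\Phi(-1)$, which is the claim.

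The main obstacle is making rigorous the passage to the limit in the second step. The two ingredients required are the $C^2$ convergence $u/\mu\to\gamma$ on compact subsets of $\mathbb{R}^n\setminus\{0\}$ (which pulls back under $y=x/r_\tau$ to $C^2$ convergence on a fixed annulus) and a uniform lower bound on $|\nabla u|$ on the rescaled surfaces so the integrand does not degenerate; the gradient estimate $|\nabla u|\ge b_0|u|^{(\alpha_0+1)/\alpha_0}$ of Theorem~\ref{existence-theorem} supplies precisely the latter. Once these are in place the limit is standard, and the corollary is immediate from Proposition~\ref{monotone-quantity}.
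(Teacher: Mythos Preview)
Your proposal is correct and follows the same route as the paper: the corollary is deduced directly from Proposition~\ref{monotone-quantity} once one knows that every level set $\{u=s\}$, $s\in[-1,0)$, is regular (Theorem~\ref{int-thm2}) and that $u\to\gamma\mu$ in $C^2$ at infinity (Lemma~\ref{asymp-lem}). The paper simply records the corollary ``in view of'' the proposition without further comment; your write-up makes explicit the passage to the limit $\tau\to-\infty$ and the role of the gradient lower bound from Theorem~\ref{existence-theorem}, which is exactly the content the paper leaves implicit.
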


Below, we prove the Kato's inequality.
\begin{proposition}[Kato's inequality]\label{Kato} Let $u$ be a $k$-admissible function satisfying $S_k(D^2 u)=0.$ Then at any regular point, i.e.,$|\n u|\neq 0$,  we have
\begin{equation}
\label{eqn:1.0*}
 \sum_mS_{k}^{ij}u_{im}u_{mj}- \frac{n}{n-k}S_k^{ij}D_i|D u|D_j|D u| \ge 0.
\end{equation}
\end{proposition}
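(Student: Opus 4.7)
At a fixed regular point, I would choose an orthonormal frame diagonalising $D^2u$, so that $u_{ij}=\lambda_i\delta_{ij}$ with $\lambda\in\bar\Gamma_k$ and $S_k(\lambda)=0$. Then $S_k^{ij}=S_{k-1}(\lambda|i)\delta_{ij}$, while $D_i|Du|=u_m u_{mi}/|Du|=u_i\lambda_i/|Du|$, so direct substitution gives
\begin{align*}
\sum_m S_k^{ij}u_{im}u_{mj}&=\sum_i \lambda_i^2 S_{k-1}(\lambda|i),\\
S_k^{ij}D_i|Du|\,D_j|Du|&=\frac{1}{|Du|^2}\sum_i u_i^2\lambda_i^2 S_{k-1}(\lambda|i).
\end{align*}
Using $|Du|^2=\sum_j u_j^2$, the Kato inequality \eqref{eqn:1.0*} rearranges into $\sum_i u_i^2 B_i\ge 0$, where
\[B_i:=\sum_j \lambda_j^2 S_{k-1}(\lambda|j)-\frac{n}{n-k}\lambda_i^2 S_{k-1}(\lambda|i).\]
Since the direction of $Du$ is unconstrained relative to the eigenframe of $D^2u$, it suffices to establish the pointwise algebraic claim $B_i\ge 0$ for every $i$.

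Next I would use $S_k(\lambda)=0$ to recast both terms in $B_i$. From $S_k(\lambda)=S_k(\lambda|i)+\lambda_i S_{k-1}(\lambda|i)=0$ one gets $\lambda_i S_{k-1}(\lambda|i)=-S_k(\lambda|i)$; combining with $S_{k+1}(\lambda)=S_{k+1}(\lambda|i)+\lambda_i S_k(\lambda|i)$ yields
\[\lambda_i^2 S_{k-1}(\lambda|i)=S_{k+1}(\lambda|i)-S_{k+1}(\lambda).\]
Summing over $i$ and invoking $\sum_i S_{k+1}(\lambda|i)=(n-k-1)S_{k+1}(\lambda)$ gives $\sum_j\lambda_j^2 S_{k-1}(\lambda|j)=-(k+1)S_{k+1}(\lambda)$. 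Substituting both into $B_i\ge 0$ and simplifying, the inequality becomes equivalent to
\[-k(n-k-1)\,S_{k+1}(\lambda)\;\ge\;n\,S_{k+1}(\lambda|i).\]

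If $\lambda_i^2 S_{k-1}(\lambda|i)=0$ the claim is trivial, since $S_{k-1}(\lambda|j)\ge 0$ on $\bar\Gamma_k$ makes $B_i$ a sum of nonnegative terms. Otherwise set $\mu:=\lambda|i\in\R^{n-1}$; then $S_{k-1}(\mu)>0$, and $S_k(\lambda)=0$ forces $\lambda_i=-S_k(\mu)/S_{k-1}(\mu)$, whence $\lambda_i^2 S_{k-1}(\mu)=S_k(\mu)^2/S_{k-1}(\mu)$ and $S_{k+1}(\lambda)=S_{k+1}(\mu)-S_k(\mu)^2/S_{k-1}(\mu)$. Plugging these into the displayed equivalent form and clearing the positive factor $S_{k-1}(\mu)$ collapses everything to
\[k(n-k-1)\,S_k(\mu)^2\;\ge\;(k+1)(n-k)\,S_{k-1}(\mu)\,S_{k+1}(\mu),\]
which is exactly Newton's inequality $p_k(\mu)^2\ge p_{k-1}(\mu)\,p_{k+1}(\mu)$, with $p_j:=S_j/\binom{n-1}{j}$, applied to the $n-1$ real variables $\mu$. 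Newton's inequality holds unconditionally for any real tuple (via Rolle applied to $\prod_j(x-\mu_j)$), which closes the proof.

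The only genuinely nonmechanical step is spotting this reduction: the constraint $S_k(\lambda)=0$ forces the dimension-$n$ Kato constant $n/(n-k)$ to coincide with the dimension-$(n-1)$ Newton ratio $\binom{n-1}{k}^2/[\binom{n-1}{k-1}\binom{n-1}{k+1}]=(k+1)(n-k)/[k(n-k-1)]$. This match of constants is exactly what makes the inequality sharp, saturated at the radial configuration $\lambda=(-(n-k)/k,1,\ldots,1)$ where Newton becomes an equality.
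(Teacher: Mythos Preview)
Your proof is correct, and it takes a genuinely different route from the paper's. The paper fixes a frame with $e_n=\frac{Du}{|Du|}$ and diagonalises only the tangential block $u_{\alpha\beta}$, so it must carry the off-diagonal entries $u_{\alpha n}$ through the computation; it then drops a nonnegative term $\sum_{\alpha\neq\beta}S_{k-3}(\lambda'|\alpha\beta)u_{\beta n}^2$, splits into the cases $S_{k-1}(\lambda')=0$ and $S_{k-1}(\lambda')>0$, and in the second case invokes two separate Newton--Maclaurin inequalities (one for $S_k^2/S_{k-1}$ versus $S_{k+1}$, one for $S_{k-1}(\lambda'|\alpha)-S_k(\lambda')S_{k-2}(\lambda'|\alpha)/S_{k-1}(\lambda')$). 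You instead diagonalise the full Hessian, which reduces the claim to the purely algebraic coefficient inequalities $B_i\ge0$, and then the constraint $S_k(\lambda)=0$ collapses each $B_i\ge0$ to a single instance of Newton's inequality in $n-1$ variables. Your approach is shorter and more transparent---in particular it explains exactly why the constant $\frac{n}{n-k}$ is sharp---at the modest cost of proving the slightly stronger statement that each $B_i\ge0$ individually. One phrasing nit: ``the direction of $Du$ is unconstrained'' is not literally what you use; you only need that $B_i\ge0$ for all $i$ is \emph{sufficient} for $\sum_i u_i^2 B_i\ge0$, and that is what your argument actually delivers.
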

\begin{proof}
In this proof, the index range are $1\leq\alpha,\beta,\ldots\leq n-1$ and $1\leq 1,j,m,\ldots\leq n$.
At any point $p$ satisfying $\n u(p)\neq 0$, choose $\{e_\alpha\}$ such that $$u_{\alpha\beta}(p)=\lambda_{\alpha}\delta_{\alpha\beta}, \quad e_n=\frac{D u}{|\n u|}(p),$$
and denote $\lambda'=(\lambda_1,\cdots,\lambda_{n-1})$. Note also that in the chosen coordinate we have $\n_i|\n u|=u_{in}$. Therefore at this point we have
\begin{equation}\label{eqn:1.5}
0=S_k(D^2u)=S_{k-1}(\lambda')u_{nn}+S_k(\lambda')-\sum_{\alpha}S_{k-2}(\lambda'|\alpha)u_{\alpha n}^2.
\end{equation}
Similarly, we can derive
\begin{equation}\label{eqn:1.6}
\begin{aligned}
S_{k+1}[u]
=&S_{k}(\lambda')u_{nn}+S_{k+1}(\lambda')-\sum_{\alpha}S_{k-1}(\lambda'|\alpha)u_{\alpha n}^2.
\end{aligned}
\end{equation}
A direct calculation shows that
\begin{equation}\label{eqn:1.1}
\begin{aligned}
  S_k^{\alpha\alpha}=
  S_{k-2}(\lambda'|\alpha)u_{nn}+S_{k-1}(\lambda'|\alpha)-\sum_{\beta\not=\alpha}S_{k-3}(\lambda'|\alpha\beta)u_{\beta n}^2
  \end{aligned},
\end{equation}
and
\begin{equation}\label{eqn:1.2}
  \begin{aligned}
  S_k^{\alpha n}
=-S_{k-2}(\lambda'|\alpha)u_{\alpha n}.
  \end{aligned}
\end{equation}
It is easy to see that for $\alpha\not=\beta$, $S_k^{\alpha \beta}=0$.
%
%
Notice that
\begin{eqnarray}
&&  kS_k(\lambda')=\sum_{\alpha}S_{k-1}(\lambda'|\alpha)\lambda_{\alpha},\label{eqn:1.4} \\
  &&  kS_k(\lambda'|\beta)=\sum_{\alpha}S_{k-1}(\lambda'|\alpha\beta)\lambda_{\alpha}.\label{eqn:1.4'}
\end{eqnarray}

Next, computing $S_k^{ij}u_{in}u_{jn}$ we obtain
\begin{equation}\label{eqn:1.7}
\begin{aligned}
S_k^{ij}u_{in}u_{jn}=&S_{k}^{\alpha\alpha}u_{\alpha n}^2+2S_{k}^{\alpha n}u_{\alpha n}u_{nn}+S_k^{nn}u_{nn}^2\\
=&\sum_{\alpha}\big(S_{k-2}(\lambda'|\alpha)u_{nn}+S_{k-1}(\lambda'|\alpha)-\sum_{\beta\not=\alpha}
S_{k-3}(\lambda'|\alpha\beta)u_{\beta n}^2\big)u_{\alpha n}^2\\
&-2\sum_{\alpha}S_{k-2}(\lambda'|\alpha)u_{\alpha n}^2u_{nn}+S_{k-1}(\lambda')u_{nn}^2\\
=&S_{k-1}(\lambda')u_{nn}^2-\sum_{\alpha}S_{k-2}(\lambda'|\alpha)u_{\alpha n}^2u_{nn}\\
&+\sum_{\alpha}\Big(S_{k-1}(\lambda'|\alpha)-\sum_{\beta\not=\alpha}S_{k-3}(\lambda'|\alpha\beta)u_{\beta n}^2\Big)u_{\alpha n}^2
\\ \leq &-S_{k}(\lambda')u_{nn}+\sum_{\alpha}S_{k-1}(\lambda'|\alpha)u_{\alpha n}^2.
\end{aligned}\end{equation}
In the last inequality, we have used \eqref{eqn:1.5} and $S_{k-3}(\lambda'|\alpha\beta)\geq 0$.

We also have
\begin{equation}\label{eqn:1.80}
\begin{aligned}
S_{k}^{ij}u_{ik}u_{kj}&= S_1(D^2u)S_{k}(D^2u)-(k+1)S_{k+1}(D^2u)= -(k+1)S_{k+1}(D^2u).
\end{aligned}
\end{equation}
Since $\lambda[D^2u]\in\bar{\Gamma}_k,$ it is easy to deduce that $\lambda'\in\bar{\Gamma}_{k-1}.$

\textbf{Case1}: When at the point $p$ we have $S_{k-1}(\l')=0,$ by \eqref{eqn:1.5} we can see
\[S_k(\l')=\sum\limits_{\al}S_{k-2}(\l'|\al)u^2_{\al n}\geq 0.\]
This yields $S_k(\l')=0$ and for any $\al$ we have
\[S_{k-2}(\l'|\al)u^2_{\al n}=0,\] which in turn implies for any $\al$
\[S_{k-1}(\l'|\al)u^2_{\al n}=0.\]
Combining the above equalities with \eqref{eqn:1.7} we conclude, in this case
\[S_k^{ij}u_{in}u_{jn}\leq 0.\]
Therefore, we have
\[ S_{k}^{ij}u_{ik}u_{kj}- \frac{n}{n-k}S_k^{ij}u_{in}u_{jn}\geq 0.\]

\textbf{Case 2}: In the following we will assume $S_{k-1}(\l')>0$ at the point under consideration.
Applying\eqref{eqn:1.5}-\eqref{eqn:1.80}, we get
\begin{equation}\label{eqn:1.800}
\begin{aligned}
& S_{k}^{ij}u_{ik}u_{kj}- \frac{n}{n-k}S_k^{ij}u_{in}u_{jn}
\\=& -(k+1)S_{k+1}(D^2u)-\frac{n}{n-k}S_k^{ij}u_{in}u_{jn}\\
\geq & -(k+1)\Big[S_{k}(\lambda')u_{nn}+S_{k+1}(\lambda')-\sum_{\alpha}S_{k-1}(\lambda'|\alpha)u_{\alpha n}^2\Big] \\
&-\frac{n}{n-k}\Big[-S_{k}(\lambda')u_{nn}+\sum_{\alpha}S_{k-1}(\lambda'|\alpha)u_{\alpha n}^2\Big]\\
=&-(k+1)S_{k+1}(\lambda')-\frac{k(n-k-1)}{n-k}\Big[S_k(\lambda')u_{nn} -\sum_{\alpha}S_{k-1}(\lambda'|\alpha)u_{\alpha n}^2\Big]\\
=&-(k+1)S_{k+1}(\lambda')\\
&-\frac{k(n-k-1)}{n-k}\lt[-S_k(\lambda')\frac{S_k(\lambda')-\sum_{\alpha}S_{k-2}(\lambda'|\alpha)u_{\alpha n}^2}{S_{k-1}(\lambda')} -\sum_{\alpha}S_{k-1}(\lambda'|\alpha)u_{\alpha n}^2\rt]\\
=&-(k+1)S_{k+1}(\lambda')+\left(\frac{k(n-k-1)}{n-k}\right)\frac{S_k^2(\lambda')}{S_{k-1}(\lambda')}\\
&-\frac{k(n-k-1)}{n-k}\sum_{\alpha}\left(S_k(\lambda')\frac{S_{k-2}(\lambda'|\alpha)}{S_{k-1}(\lambda')}-S_{k-1}(\lambda'|\alpha)\right)u_{\alpha n}^2.
\end{aligned}
\end{equation}
Finally, the Newton-Maclaurin inequality implies
\begin{equation}\label{eqn:1.9}
\left(\frac{k(n-k-1)}{n-k}\right)\frac{S_k^2(\lambda')}{S_{k-1}(\lambda')}- (k+1)S_{k+1}(\lambda')\ge 0,
\end{equation}
and
\begin{equation}\label{eqn:1.10}
  S_{k-1}(\lambda'|\alpha)- \frac{S_k(\lambda')S_{k-2}(\lambda'|\alpha)}{S_{k-1}(\lambda')}=\frac{S^2_{k-1}(\lambda'|\alpha)-S_k(\lambda'|\alpha)S_{k-2}(\lambda'|\alpha)}{S_{k-1}(\lambda')}\ge 0.
\end{equation}
Inserting \eqref{eqn:1.9} and  \eqref{eqn:1.10} into  \eqref{eqn:1.800}, we conclude that
\begin{eqnarray*}
 S_{k}^{ij}u_{ik}u_{kj}- \frac{n}{n-k}S_k^{ij}u_{in}u_{jn}\ge 0.
\end{eqnarray*}
Thus we finish the proof.
\end{proof}
Finally, applying Lemma 2.2 in \cite{GS}, Theorem \ref{int-thm2}, and Corollary \ref{cor monotone-quantity}, Corollary \ref{int-cor1} follows directly.

\end{document}